\newfont {\cyr} {wncyr10}
\renewcommand{\labelenumi}{{(\roman{enumi})}}
\newtheorem{theorem}{Theorem}[section]
\newtheorem{lemma}[theorem]{Lemma}
\newtheorem{proposition}[theorem]{Proposition}
\newtheorem{definition}[theorem]{Definition}
\newcounter{claim}[theorem]
\newcounter{cclaim}[theorem]
\def \udot {{}^{\textstyle .}}
\newcommand{\E}{\mathrm{E}}\newcommand{\SU}{\mathrm{SU}}
\newcommand{\F}{\mathrm{F}}\newcommand{\M}{\mathcal{M}}
\newcommand{\G}{\mathrm{G}}
\newcommand{\C}{\mathcal{C}}
\newcommand{\Q}{\mathrm{Q}}
\newcommand{\Aut}{\mathrm{Aut}}
\newcommand{\Out}{\mathrm{Out}}
\newcommand{\Syl}{\mathrm{Syl}}\newcommand{\syl}{\mathrm{Syl}}
\newcommand{\GF}{\mathrm{GF}}
\newcommand{\GL}{\mathrm{GL}}
\newcommand{\Sp}{\mathrm{Sp}}
\newcommand{\SL}{\mathrm{SL}}
\newcommand{\PSL}{\mathrm{PSL}}\newcommand{\PSp}{\mathrm{PSp}}
\newcommand{\Sym}{\mathrm{Sym}}
\newcommand{\Alt}{\mathrm{Alt}}
\def \qedc {$\hfill \blacksquare$\newline}
\def \L {\hbox {\rm L}}
\def \syl {\hbox {\rm Syl}}\def \Syl {\hbox {\rm Syl}}
\def \ov {\overline}
\def \wt {\widetilde}
\def \Aut{ \mathrm {Aut}}
\def \Out{\mbox {\rm Out}}
\def \Fi {\mbox {\rm Fi}}
\def \J{\mbox {\rm J}}
\def \B{\mbox {\rm B}}
\def \M{\mbox {\rm M}}
\def \Co {\mbox {\rm Co}}
\def \PSU {\mbox {\rm PSU}}
\def \GSp {\mbox {\rm GSp}}
\begin{document}
\renewcommand{\labelenumi}{(\roman{enumi})}

\title  {An identification theorem for the sporadic simple groups $\F_2$ and $\M(23)$}
 \author{Chris Parker}
  \author{Gernot Stroth}

\address{Chris Parker\\
School of Mathematics\\
University of Birmingham\\
Edgbaston\\
Birmingham B15 2TT\\
United Kingdom} \email{c.w.parker@bham.ac.uk}

\address{Gernot Stroth\\
Institut f\"ur Mathematik\\ Universit\"at Halle - Wittenberg\\
Theordor Lieser Str. 5\\ 06099 Halle\\ Germany}
\email{gernot.stroth@mathematik.uni-halle.de}

\email {}

\date{\today}

\begin{abstract} We identify the sporadic groups $\M(23)$ and $\F_{2}$ from the approximate  structure of the centralizer of an element of order $3$.

 \end{abstract}

\maketitle \pagestyle{myheadings}

\markright{{\sc sporadic groups  }} \markleft{{\sc Chris Parker and Gernot Stroth}}

\section{Introduction}

The sporadic simple group known as the Baby Monster and denoted here by $\F_2$ is defined to be the finite group $G$ having  involutions $\pi$ and $\sigma$ such that $C_G(\pi) \cong  (2\udot {}^2\E_6(2)){:}2$ and $C_G(\sigma)$ has type $2^{1+22}_+.\Co_2$. Here we say that $H$ has type $2^{1+22}_+.\Co_2$ if $F^*(H)$ is extraspecial of order $2^{23}$ and $+$-type and $H/F^*(H) \cong \Co_2$. In \cite{Segev} it is shown that there is a unique group having  such involution centralizers.

For  a definition of the Fischer group  $\M(23)$, we follow Aschbacher and  define $\M(23)$ to be the  finite
group $G$ with an involution $d$ which is not weakly closed in a Sylow $2$-subgroup of $G$ and such that
$C_G(d)/\langle d \rangle \cong \M(22)$. In \cite[Theorem 32.1]{Aschbacher3T}, Aschbacher shows that there is a
unique such group  and that it  is a $3$-transposition group.

This article characterizes  $\M(23)$ and $\F_2$ from the structure of a centralizer of an element of order $3$
and certain fusion data. This work is one of a series of papers where this type of problem  is addressed.
Particularly relevant to this article are \cite{ParkerRowley,PSS,PS1,F4}. We refer the reader to \cite{PS1} where we
motivate this study by describing certain configurations which appear when classifying the finite simple groups.
Here we content ourselves with the fact that the large sporadic simple groups which are the focus of this paper
are interesting in their own right. We note however, that the main theorem of this article has immediate application in \cite{PS2}.

\begin{definition} \label{dd1} A group   $X$ is similar to a $3$-centralizer in a group of type  $\M(23)$ provided
\begin{enumerate}
\item $Q=F^*(X)= O_3(X)$ is extraspecial of type $3^{1+8}_+$ and $Z(Q) =Z(X)$;
\item  $F^*(X/Q)= O_2(X/Q)$ is extraspecial of type $2^{1+6}_-$; and
\item $X/O_{3,2}(X)$ is isomorphic to the centralizer of a $3$-central element in $\PSp_4(3)\cong \Omega_6^-(2)$.
\end{enumerate}
\end{definition}

So in Definition~\ref{dd1}  we are saying that $X$ has shape $3^{1+8}_+. 2^{1+6}_-.3^{1+2}_+.\Q_8.3$.

\begin{definition} \label{dd2} A group   $X$ is similar to a $3$-centralizer in a group of type  $\F_2$ provided
\begin{enumerate}
\item $Q=F^*(X)= O_3(X)$ is extraspecial of type $3^{1+8}_+$ and $Z(Q) =Z(X)$;
\item  $F^*(X/Q)= O_2(X/Q)$ is extraspecial of type $2^{1+6}_-$; and
\item $X/O_{3,2}(X) \cong \Omega_6^-(2)$.
\end{enumerate}
\end{definition}

In this definition we have that $X$ has shape $3^{1+8}_+.2^{1+6}_-.\Omega_6^-(2)$. That $\M(23)$ and $\F_2$ have centralizers of the described shapes can be seen in \cite[Tables 5.3u and 5.3.y]{GLS3}. In both groups  the elements of order $3$ being centralized are $3$-central and inverted in their normalizers.  Recall that for a group $G$ with subgroups $X \le Y \le G$, we say that $X$ is \emph{weakly closed} in $Y$ with respect to $G$ if $X$ is the unique $G$-conjugate of $X$ contained in $Y$.

Our main theorems are as follows.

\begin{theorem}\label{Main1}
Suppose that $G$ is a finite group, $H \le G$ is similar to a $3$-centralizer in a group of type $\M(23)$, $Z =
Z(F^*(H))$ and $H= C_G(Z)$. If $Z$ is not weakly closed  in a Sylow 3-subgroup  $S$ of $H$ with respect to $G$, then
$G$ is isomorphic to $\M(23)$.
\end{theorem}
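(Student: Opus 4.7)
The plan is to combine an internal analysis of $H$ with the fusion forced by the non-weak-closure hypothesis to construct an involution $d \in G$ with $C_G(d)/\langle d\rangle \cong \M(22)$ that is not weakly closed in a Sylow $2$-subgroup of $G$, and then invoke Aschbacher's theorem~\cite[Theorem 32.1]{Aschbacher3T} to conclude $G \cong \M(23)$.

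First I would work entirely inside $H$ to determine the orbits of $H$ on the subgroups of order $3$ in $Q = F^*(H)$, the conjugacy classes of elementary abelian $3$-subgroups $E$ of a fixed Sylow $3$-subgroup $S$ of $H$, and the structure of $N_H(E)/C_H(E)$ for each such $E$. This amounts to a module-theoretic calculation driven by the action of $H/Q$ on $Q/Z$ as a faithful module for the quotient of shape $2^{1+6}_-.3^{1+2}_+.\Q_8.3$, combined with the constraint $Z = Z(X)$. The outcome is a short list of $H$-classes of subgroups of order $3$ in $S$ to which a putative second $G$-conjugate $Z^g$ of $Z$ can belong, together with tight control on $C_H(Z^g)$ and $N_H(\langle Z,Z^g\rangle)$.

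Using the failure of weak closure of $Z$ in $S$, I would then pick $g \in G$ with $Z \ne Z^g \le S$ and study $E = \langle Z, Z^g\rangle$ and $K = N_G(E)$. Since $K$ contains two $G$-conjugates of $Z$ whose centralizers have the prescribed shape, a case analysis pins down the isomorphism type of $K$ and so produces a genuinely new $3$-local subgroup of $G$. An amalgam argument for the pair $(H,K)$ in the spirit of~\cite{PSS,PS1,F4} then shows that, for a suitable involution $d \in H$ whose image in $H/Q$ lies in the factor $F^*(H/Q) \cong 2^{1+6}_-$, the subgroup $C_G(d)$ accumulates enough $3$-local and $2$-local data to allow $C_G(d)/\langle d\rangle$ to be recognised as $\M(22)$ by an existing characterisation (for example one from~\cite{PS1}). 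The main obstacle is precisely this recognition step: matching the induced $3$-central centralizer in $C_G(d)/\langle d\rangle$ with that of $\M(22)$ requires delicate bookkeeping of which elements of $H$ lift to involutions centralised by $Z$ and of how the $K$-fusion interacts with the $H$-fusion, so as to rule out parasitic local configurations that satisfy the hypotheses but cannot globalise to $\M(23)$.

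Finally, once $C_G(d)/\langle d\rangle \cong \M(22)$ has been established, I would verify that $d$ is not weakly closed in a Sylow $2$-subgroup of $G$ by a transfer and fusion argument: weak closure of $d$ would force a subgroup of index $2$ in $G$, which together with the fusion of involutions inside $H$ (and the non-trivial fusion of $Z$ supplied by $K$) is incompatible with the $3$-local structure already derived. Aschbacher's theorem~\cite[Theorem 32.1]{Aschbacher3T} then delivers $G \cong \M(23)$.
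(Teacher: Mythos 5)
Your overall strategy --- produce an involution $d$ with $C_G(d)/\langle d\rangle\cong\M(22)$ that is not weakly closed in a Sylow $2$-subgroup of $G$ and then quote Aschbacher's theorem --- is exactly the paper's. However, there are two concrete problems with the way you propose to carry it out.

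First, your choice of involution cannot work. You take $d$ with image in $F^*(H/Q)\cong 2^{1+6}_-$. The central involution of that group inverts $Q/Z$, and any non-central involution $r$ of it is $R$-conjugate to $r\sigma$, which forces $|C_{Q/Z}(r)|=3^4$ and hence $|C_Q(r)|=3^5$ (this is what Lemma~\ref{CR}(i) records; in the $\F_2$ case such an $r$ is precisely the involution leading to the $\Co_2$-type centralizer, not the $\M(22)$-type one). Since $Z\le C_G(d)$ and $C_{C_G(d)}(Z)=C_H(d)$, for $C_G(d)/\langle d\rangle$ to be $\M(22)$ you need $C_Q(d)$ extraspecial of order $3^7$, because the $3$-centralizer of a $3$-central element of $\M(22)$ has $O_3\cong 3^{1+6}$ (Definition~\ref{d1}(ii)). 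No involution with image in $O_2(H/Q)$ achieves this. The correct involution is the one the paper calls $\pi$, lying \emph{outside} $O_{3,2}(H)$ and projecting to the central involution of $H/O_{2,3}(H)\cong 3^{1+2}_+.\SL_2(3)$; it satisfies $C_Q(\pi)\cong 3^{1+6}_+$ and $C_H(\pi)/\langle\pi\rangle$ similar to a $3$-centralizer in $\M(22)$ (Lemmas~\ref{basics}(ix) and \ref{c3}(ii)).

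Second, the recognition step, which you rightly flag as the main obstacle, is left without a workable mechanism. The paper does not form $N_G(\langle Z,Z^g\rangle)$ as a new $3$-local subgroup, and no amalgam for a pair $(H,K)$ is run; instead it verifies the exact hypothesis of the identification theorem \cite[Theorem 1]{PSS} (that, not \cite{PS1}, which concerns $\PSU_6(2)$, is the relevant characterisation of $\M(22)$), namely that $Z$ is not weakly closed in $C_S(\pi)$ with respect to $C_G(\pi)$. Transporting the global fusion $Z\ne Z^g\le S$ into the centralizer of $\pi$ --- i.e.\ producing a conjugate $Y$ of $Z$ with $\langle Y,Z\rangle\le C_S(\pi)$ and $Y$, $Z$ fused in $C_G(\pi)$ --- is the entire content of the paper's Lemma~\ref{wk2}: a long case analysis on how $Y$ acts on $Q/Z$ (elements of type $d_1$, $d_2$, $d_3$), ending in the arithmetic contradiction that $|Z^{\langle Q,Q^g\rangle}|\in\{82,163\}$ is incompatible with $\GL_6(3)$. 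Your proposal supplies no such transport, and there is no reason to believe that $N_G(\langle Z,Z^g\rangle)$ can be pinned down directly from the stated hypotheses; without one of these two routes the $\M(22)$ identification cannot be invoked and the argument does not close.
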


\begin{theorem}\label{Main2}
Suppose that $G$ is a finite group, $H \le G$ is similar to a $3$-centralizer in a group of type $\F_2$, $Z = Z(F^*(H))$
and $H= C_G(Z)$. If $Z$ is not weakly closed    in a Sylow 3-subgroup  $S$ of $H$ with respect to $G$, then $G$ is isomorphic to $
\F_2$.
\end{theorem}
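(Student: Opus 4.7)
The plan is to identify $G$ as $\F_2$ by following the same strategy as for Theorem~\ref{Main1}, proving the two theorems in parallel, and closing at the end with the appropriate uniqueness result. For the $\F_2$ case one must construct inside $G$ an involution $\sigma$ whose centralizer has type $2^{1+22}_+.\Co_2$ (and, subsequently, a second involution $\pi$ with $C_G(\pi)\cong (2\udot{}^2\E_6(2)){:}2$), and then apply the classification of \cite{Segev}. In the $\M(23)$ case one instead locates a conjugacy class of $3$-transpositions containing an involution with centralizer $2\udot\M(22)$ and invokes \cite[Theorem~32.1]{Aschbacher3T}.

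The first step is a detailed internal analysis of $H$. From Definition~\ref{dd2} one reads off that $Q=F^\ast(H)=O_3(H)\cong 3^{1+8}_+$, that $H/Q$ has shape $2^{1+6}_-.\Omega_6^-(2)$, and that $Q/Z$ carries the structure of a natural orthogonal $\mathbb{F}_3\Omega_6^-(2)$-module, on which the normal $2^{1+6}_-$ acts as a spin-type group. Fixing $S\in\Syl_3(H)$, one classifies $H$-conjugacy classes and centralizers of elements of order $3$ in $S$, distinguishes singular from non-singular lines of $Q/Z$ under the invariant quadratic form, and locates the central involution $t$ of $O_2(H/Q)$, together with the subgroup $C_H(t)$; this is the seed out of which the target involution centralizer will grow.

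The second step uses the non-weak-closure hypothesis. Pick $g\in G$ with $Z\neq Z^g\le S$ and set $Y=\langle Z,Z^g\rangle$. Comparing the two embeddings $Y\le H$ and $Y\le H^g$ produces an element $x\in N_G(Y)\setminus N_G(Z)$; careful pushing-up forces $Z^g$ into a specific $H$-orbit of singular lines of $Q/Z$ and pins down $N_G(Y)$ as an explicit $3$-local subgroup whose $3'$-section extends $N_H(Y)/C_H(Y)$ by a symmetric group. Iterating this amalgam argument across rank-$2$ and rank-$3$ singular flags in $Q/Z$ constructs a full family of $3$-local subgroups of $G$ playing the roles of the $3$-local parabolics of $\F_2$ above the chosen $3$-central $Z$; in particular one obtains an $\Omega_6^-(2)$-geometry of conjugates of $Z$ sitting inside the would-be involution centralizer.

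The third step assembles $C_G(\sigma)$. Starting from $C_H(t)$ and the $3$-locals of Step~2, one identifies an involution $\sigma$ of $G$ (a suitable lift of $t$) and builds $R=F^\ast(C_G(\sigma))$ generator by generator, proving it is extraspecial of order $2^{23}$ and of $+$-type, that $R/\langle\sigma\rangle$ supports the standard $22$-dimensional $\mathbb{F}_2\Co_2$-module, and finally that $C_G(\sigma)/R\cong\Co_2$ by applying an existing recognition theorem for $\Co_2$ to the faithful action on $R/\langle\sigma\rangle$. Inside $R$ one then finds the second involution $\pi$ required by \cite{Segev}, verifies $C_G(\pi)\cong(2\udot{}^2\E_6(2)){:}2$ (using the $3$-local data again), and concludes $G\cong\F_2$. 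The main obstacle lies here in Step~3: producing $R$ with the correct order, type and self-centralizing property without circular reasoning, and recognizing the $\Co_2$-quotient from the ground up. It is precisely the non-weak-closure hypothesis, exploited through Step~2, that supplies enough conjugates of $Z$ to generate the rank-$3$ geometry and thereby pin down the Leech-lattice action that characterizes $\Co_2$.
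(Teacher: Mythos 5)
Your overall target is the right one --- the paper's definition of $\F_2$ is exactly Segev's two-involution-centralizer characterization, so producing $C_G(\pi)\cong(2\udot{}^2\E_6(2)){:}2$ and a centralizer of type $2^{1+22}_+.\Co_2$ does finish the proof. But the route you sketch diverges from what actually works, and it contains both a concrete error and two genuine gaps. First, the error: $Q/Z$ is an $8$-dimensional symplectic $\GF(3)$-space on which $R=O_2(H/Q)\cong 2^{1+6}_-$ acts faithfully; $\Omega_6^-(2)$ acts on $R/Z(R)$, not ``naturally and orthogonally'' on $Q/Z$. Your classification of elements of order $3$ and of ``singular lines'' would be built on the wrong module. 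Second, your Step~2 (an amalgam over rank-$2$ and rank-$3$ singular flags, assembling an $\Omega_6^-(2)$-geometry and ultimately ``the Leech-lattice action that characterizes $\Co_2$'') is not a proof outline so much as a wish: no recognition theorem for $\Co_2$ from such data is cited or available in that form, and building one is a separate paper. What the actual argument does instead is transfer the weak-closure hypothesis: the hard technical step is showing that $Z$ is \emph{not} weakly closed in $C_S(\pi)$ with respect to $C_G(\pi)$, after which the $3$-local identification theorems of Parker--Salarian--Stroth give $C_G(\pi)\cong(2\udot{}^2\E_6(2)){:}2$ and $C_G(\rho)\cong 3\times\Aut(\M(22))$, and the $3$-local characterization of $\Co_2$ of Parker--Rowley (applied to $C_{N_G(E)/E}(ZE/E)$, which is similar to a $3$-centralizer in $\Co_2$ by the internal analysis of $C_H(r)$) gives $N_G(E)/E\cong\Co_2$. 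Your plan also inverts the logical order: the extraspecial group $E$ of order $2^{23}$ is manufactured \emph{inside} the already-identified $C_G(\pi)$, starting from the $2^{1+20}_+.\PSU_6(2){:}2$ centralizer of a $2$-central involution of ${}^2\E_6(2){:}2$; one cannot build it ``generator by generator'' before knowing $C_G(\pi)$.

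The second gap is that even granting $N_G(E)/E\cong\Co_2$, you still have to prove $N_G(E)=C_G(r)$, and your sketch never confronts this. A priori $C_G(r)$ could properly contain the $2$-local subgroup you have constructed. The paper needs a further chapter of work here: showing $N_G(E)$ is strongly $3$-embedded in $C_G(r)$ (using control of fusion of $3$-elements and the two classes of elements of order $3$ in $\Co_2$), then a fusion analysis of involutions (every involution of $\Co_2$ meets $\PSU_6(2){:}2$, every involution outside $E$ is centralized by a $3$-element, $E=J(S)$ for a Sylow $2$-subgroup) to conclude that $E$ is strongly closed, and finally Goldschmidt's theorem on strongly closed abelian $2$-subgroups to force $C_G(r)=N_G(E)$. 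Without this, the identification of the second involution centralizer --- and hence the appeal to Segev --- is incomplete.
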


We remark that all the lemmas and propositions proved in the proof of our main theorems are results about groups with the given $3$-centralizers and so can be used as a source of facts about the groups $\F_2$ and $\M(23)$.

In Section~2,  we present some background lemmas as well as introduce the definitions of a  group $X$ being similar to $3$-centralizers in the groups $\Co_2$,  ${}^2\E_6(2)$ and $\M(22)$. In Section 3, we construct groups $M$ and $U$,  similar to  $3$-centralizers in $\M(23)$ and $\F_2$ respectively, as semidirect products of an extraspecial group of order $3^9$ and a carefully constructed  subgroup of $\Sp_8(3)$.  In particular, in this section we describe  a certain involution  $\pi$.

It turns out that any group which is similar to a  $3$-centralizer of type $\F_2$ is isomorphic to $U$; however,  the isomorphism type of groups which are similar to $3$-centralizers in $\M(23)$ is not uniquely determined.  In the rather technical Section 4, we determine various properties of groups $H$ which are similar to $3$-centralizers of type $\M(23)$ and $\F_2$ in a context where they are  embedded in some finite group $G$. In particular, we determine $C_H(\pi)$ in Lemma~\ref{c3} where we show that $C_H(\pi)/\langle \pi\rangle$ is similar to a $3$-centralizer in $\M(22)$ if $H$ is similar to a $3$-centralizer in $\M(23)$ and $C_H(\pi)/\langle \pi\rangle$ is similar to a $3$-centralizer in ${}^2\E_6(2)$ if $H$ is similar to a $3$-centralizer in $\F_2$. In addition, in this section, when
$H$ is similar to a $3$-centralizer in $\F_2$, we   locate a further involution $r\in H$ and, in Lemma~\ref{CR}, show that $C_H(r)/O_2(C_H(r))$ is similar to a $3$-centralizer in $\Co_2$.

Suppose now that $G$ is a group which satisfies  the hypotheses of Theorems~\ref{Main1} or \ref{Main2} and let $S$, $H$ and $Z$ be as in the  statements of these theorems.  The objective of Section~5 is to prove that $C_G(\pi)/\langle \pi\rangle\cong \M(22)$ when $H$ is similar to a $3$-centralizer in $\M(23)$ and  $C_G(\pi) \cong (2\udot{}^2\E_6(2)){:}2$ when $H$ is similar to a $3$-centralizer in $\F_2$. We intend to apply the appropriate identification theorems from \cite{PSS}. Because of the conclusions from Section~4, to apply \cite{PSS} we need to show that $Z$ is not weakly closed in $C_S(\pi)$ with respect to $C_G(\pi)$. This is the main objective of Section~5. The proof of this fact relies heavily on the detailed descriptions of groups similar to $3$-centralizers of type $\M(23)$ and $\F_2$ obtained in Sections 3 and 4 and is probably the highlight of this article. By the end of Section 5, we know,   if $G$ satisfies the hypothesis of Theorem~\ref{Main1}, then $C_G(\pi)/\langle \pi \rangle \cong \M(22)$. At this stage of our arguments, it is easy to see that $\pi$ is not weakly closed in a Sylow $2$-subgroup of $G$. Therefore, in this case,   $G$ is isomorphic to $\M(23)$ and the proof of Theorem~\ref{Main1} is complete.

To prove Theorem~\ref{Main2} we must determine $C_G(r)$. This is the goal of the remaining three sections of the paper. The first of these, Section~6, is very short and shows that there is an element $\rho$ of $G$ with  $N_G(\langle \rho\rangle)\cong \Sym(3) \times \Aut(\M(22))$ (Lemma~\ref{Crho}).
The proof of this statement is a relatively straightforward application of \cite[Theorem 2]{PSS}.  In Section~7 the main objective is to prove Proposition~\ref{NE} which states that $C_G(r)$ contains an extraspecial subgroup $E$ of plus type and order $2^{23}$ and centre $\langle r \rangle $ such that $N_G(E)/E \cong \Co_2$. Our precise description of the involution $r$ means that we can determine its centralizer in $C_G(\pi)$. Thus we begin the determination of $C_G(r)$ knowing that $C_{C_{G}(\pi) }(r)/\langle r \rangle$ has shape $2^{1+20}_+.\PSU_6(2).2$. The determination  of $C_G(r)$ uses this fact,  exploits \cite{ParkerRowley} and the fact that  $C_H(r) /O_2(C_H(r))$ is similar to a $3$-centralizer in $\Co_2$ which was demonstrated in Section~4.  The final result of Section 7 asserts $N_G(E)$ is strongly $3$-embedded in $C_G(r)$. With this to hand in Section~8, we show that $N_K(E) = C_G(r)$ and thus we complete the proof of Theorem~\ref{Main2}.

Throughout this article we follow the now standard Atlas \cite{Atlas} notation for group extensions. Thus $X\udot
Y$ denotes a non-split extension of $X$ by $Y$, $X{:}Y$ is a split extension of $X$ by $Y$ and we reserve the
notation $X.Y$ to denote an extension of undesignated type. Our group
theoretic notation is mostly standard and follows that in  \cite{GLS2} for example. For odd primes $p$, the
extraspecial groups of exponent $p$ and order $p^{2n+1}$ are denoted by $p^{1+2n}_+$. The extraspecial $2$-groups
of order $2^{2n+1}$ are denoted by $2^{1+2n}_+$ if the maximal elementary abelian subgroups have order $2^{1+n}$
and otherwise we write $2^{1+2n}_-$. We expect our notation for specific groups is self-explanatory; however, we remark that $\M(22)$,  $\M(23)$ and are the sporadic simple groups discovered and constructed by Fischer  which are also denoted by $\Fi_{22}$, $\Fi_{23}$ and $\Fi_{24}'$ and $\F_2$  is the Baby Monster simple group. For a subset
$X$ of a group $G$, $X^G$ denotes that set of $G$-conjugates of $X$. If $x, y \in H \le  G$, we  write
$x\sim _Hy$ to indicate that $x$ and $y$ are conjugate in $H$.  Often we shall give suggestive descriptions of groups which indicate the isomorphism type of certain composition factors. We refer to such descriptions as the \emph{shape} of a group. Groups of the same shape have normal series with isomorphic sections. As an example, we refer back to the description of a $3$-centralizer of type  $\F_2$. We have already said that it has shape $3^{1+8}_+.2^{1+6}_-.\Omega_6^-(2)$ and this means that there is a normal subgroup isomorphic to $3^{1+8}_+$, a section isomorphic to $2^{1+6}_-$ and a quotient isomorphic to $\Omega_6^-(2)$ it says nothing about the action of these groups on the various sections.  We use the symbol $\approx$ to indicate the shape of a group.

\medskip

\noindent {\bf Acknowledgement.}  The first author is  grateful to the DFG for their support and thanks the mathematics department in Halle for their hospitality.

\section{Preliminary lemmas, facts and definitions}

\begin{lemma}\label{fusion}  Suppose that $p$ is a prime, $G$ is a group, $H \le G$ and $x$ is a $p$-element of $G$. Assume that the following statements  hold.
\begin{enumerate}
\item [(a)] $C_G(x) \le H$; and
\item[(b)] $x^G \cap H= x^H$.
\end{enumerate}
Then the following statements also hold.
\begin{enumerate}
\item  If $x \in K \le H$, then $N_G(K) \le H$; and
\item  For  $a \in G$, $a^G \cap C_G(x) = a^H \cap C_G(x)$.
\end{enumerate}
\end{lemma}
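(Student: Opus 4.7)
The plan for (i) is to invoke the hypotheses directly. I will take $g \in N_G(K)$; the inclusion $x \in K$ together with $g$-invariance of $K$ gives $x^g \in K \subseteq H$, and (b) then produces $h \in H$ with $x^g = x^h$. Consequently $gh^{-1}$ centralizes $x$, and by (a) it lies in $H$, forcing $g \in H$.

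For (ii) the inclusion $a^H \cap C_G(x) \subseteq a^G \cap C_G(x)$ is immediate, so I will focus on the reverse. Given $b \in a^G \cap C_G(x)$, I write $b = a^g$ for some $g \in G$ and observe that since $[b,x]=1$, the element $y := x^{g^{-1}}$ satisfies $[a,y]=1$, so $y \in C_G(a)$ and $y^g = x$. My aim is to produce $h \in H$ with $a^h = b$, and this reduces to showing $g \in C_G(a)\,H$: writing $g = ch$ with $c \in C_G(a)$ and $h \in H$, one has $a^h = a^{c^{-1}g} = a^g = b$, as $c$ centralises $a$.

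To show $g \in C_G(a)\,H$, I plan to produce $c \in C_G(a)$ with $y^{c^{-1}} \in H$. Once this is achieved, (b) applied to $y^{c^{-1}} \in x^G \cap H$ yields $h_0 \in H$ with $y^{c^{-1}} = x^{h_0}$, and combining with $y^g = x$ forces $h_0^{-1}c^{-1}g \in C_G(x) \le H$ via (a), placing $g$ in $C_G(a)\,H$. The production of $c$ is the heart of the argument: one uses Sylow's theorem inside $C_G(a)$ to conjugate the cyclic $p$-subgroup $\langle y \rangle$ into a Sylow $p$-subgroup of $C_G(a)$ that is visible inside $H$, exploiting that $\langle y\rangle$ is a $G$-conjugate of $\langle x\rangle \le H$. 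The Sylow/fusion step is the main technical obstacle; with $c$ in hand the remaining algebra follows the same cancellation pattern used in (i).
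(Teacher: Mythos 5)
Part (i) is complete and coincides with the paper's argument. For part (ii) your overall route is also the paper's, run on the $a$-side instead of the $b$-side: you conjugate $y=x^{g^{-1}}\in C_G(a)$ back into $H$ by an element of $C_G(a)$, then apply (b) and (a) to land $g$ in $C_G(a)H$; the paper instead takes Sylow $p$-subgroups of $C_H(a)$ and $C_H(b)$ containing $x$ and conjugates inside $C_G(b)$. The cancellation at the end is fine up to conjugation bookkeeping (from $x^{h_0}=y^{c^{-1}}=x^{g^{-1}c^{-1}}$ one gets $h_0cg\in C_G(x)\le H$, whence $g\in c^{-1}h_0^{-1}H\subseteq C_G(a)H$).

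The step you defer as ``the main technical obstacle'' is, however, where the entire content of the lemma sits, and the lever you name --- that $\langle y\rangle$ is a $G$-conjugate of $\langle x\rangle\le H$ --- does not by itself do the job: a $G$-conjugate of a subgroup of $H$ need not be conjugate into $H$ by an element of $C_G(a)$. What is needed is that $C_H(a)$ contains a full Sylow $p$-subgroup of $C_G(a)$, and this is exactly where part (i) must be invoked. Concretely: restrict to the case $a\in C_G(x)$ (as the paper does; this is the case occurring in all applications). Then $x\in C_G(x)\le H$ and $x\in C_G(a)$, so $x$ is a $p$-element of $C_H(a)$; choose $T\in\Syl_p(C_H(a))$ with $x\in T$. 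By part (i), $N_G(T)\le H$, hence $N_{C_G(a)}(T)\le C_H(a)$, and since $T$ is Sylow in $C_H(a)$ this forces $T\in\Syl_p(C_G(a))$. Now Sylow's theorem in $C_G(a)$ conjugates the $p$-subgroup $\langle y\rangle$ into $T\le H$, which produces your element $c$, and the remainder of your argument closes. Without anchoring the Sylow subgroup at $x$ and feeding it through (i), the ``Sylow/fusion step'' you flag would indeed fail, so as written the proposal has a genuine (though entirely fillable) gap at its central point.
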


\begin{proof} Assume that $x \in K \le H$ and $y \in N_G(K)$. Then
$x^y \in K \le H$ and so, by (a), there exits $h \in H$ such that  $x^{yh}= x$. But then $yh \in C_G(x)\le H$ and
so $y \in H$. Therefore $N_G(K) \le H$ and (i) holds.

Suppose that $a \in C_G(x)$,  $g\in G$ and $b=a^g \in  a^G \cap  C_G(x)$. Let $T \in \Syl_p(C_H(a))$ and $T_1 \in
\syl_p(C_H(b))$ with $x \in T$ and $x\in T_1$. Then, by (i), $T \in \syl_p(C_G(a))$ and $T_1 \in \syl_p(C_G(b))$.
Since $T^g \in \syl_p(C_G(b))$, there exists $w\in C_G(b)$ such that $T^{gw} =T_1$. Now we have $x^{gw} \in
T_1\le H$ and so using (b) there exists $h \in H$ such that $x^{gwh} = x$. Therefore, by (a),  $gwh \in C_G(x)
\le H$ and so $gw \in H$. Since $a^{gw}= b^w=b$, we have  $b \in a^H\cap C_G(x)$. Thus (ii) holds.
\end{proof}

For a group $X$ with subgroups $A \le Y \le X$  we say that $A$ is \emph{strongly closed  in $Y$ with
respect to $X$} provided $A^x \cap Y \le A$ for all $x \in X$.

\begin{lemma}\label{Gold} Suppose that $K$ is a group, $O_{2'}(K)=1$,  $E$ is an abelian $2$-subgroup of $K$ and $E$ is strongly closed in $N_K(E)$. Assume that $F^*(N_K(E)/E)$ is a non-abelian simple group.
 Then  $K= N_K(E)$.
\end{lemma}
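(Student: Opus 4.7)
The plan is to argue by contradiction using Goldschmidt's theorem on strongly closed abelian $2$-subgroups. Suppose for a contradiction that $K \ne N_K(E)$, and set $N := \langle E^K \rangle$, the normal closure of $E$ in $K$. Then $N \trianglelefteq K$ and $E < N$. Since $O(N)$ is characteristic in $N$ and has odd order, $O(N) \le O_{2'}(K) = 1$. Using a standard fusion argument (choose $T \in \Syl_2(K)$ containing a Sylow $2$-subgroup of $N_K(E)$ that contains $E$) the hypothesis that $E$ is strongly closed in $N_K(E)$, together with the abelianness of $E$, promotes to strong closure of $E$ in $T$ with respect to $K$. Goldschmidt's theorem then gives that $N$ is a direct product of an abelian $2$-group contained in $E$ and of simple groups $L_1, \dots, L_r$ from the familiar short list $\{\L_2(2^n),\ \Sz(2^n),\ \U_3(2^n),\ {}^2\G_2(3^n),\ \J_1,\ \L_2(q)\ (q \equiv \pm 3 \pmod 8)\}$, with $E \cap L_i$ a nontrivial strongly closed abelian $2$-subgroup of $L_i$ for each $i$.

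Next I would examine $N_N(E) = N \cap N_K(E)$, which is normal in $N_K(E)$ because $N$ is normal in $K$. Suppose first that $N_N(E) = E$. Then $E$ is self-normalising in $N$: for any Sylow $2$-subgroup $P$ of $N$ containing $E$ one has $N_P(E) \le N_N(E) = E$, so normaliser growth in $2$-groups forces $P = E$, whence $E \in \Syl_2(N)$. Because $E$ is abelian we also have $C_N(E) = E$, so Burnside's normal $p$-complement theorem yields $N = E \cdot O(N) = E$, contradicting $E < N$. So in fact $N_N(E) > E$.

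Now since $F^*(N_K(E)/E)$ is a non-abelian simple group, every nontrivial normal subgroup of $N_K(E)/E$ contains this socle; in particular $N_N(E)/E$ is non-solvable. On the other hand, the Goldschmidt decomposition of $N$ yields
\[
N_N(E)/E \;\cong\; \prod_{i=1}^r N_{L_i}(E \cap L_i)/(E \cap L_i),
\]
and a direct inspection of each of the groups on Goldschmidt's list shows that every factor on the right is solvable (cyclic for $\L_2(2^n)$; a solvable Borel-type quotient for the other Lie-type entries; a small solvable group for $\J_1$ and $\L_2(q)$). Hence $N_N(E)/E$ is solvable, contradicting the non-solvability just established. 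Thus $K = N_K(E)$, as required.

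The main obstacle will be the invocation of Goldschmidt's theorem itself, and in particular the reduction from strong closure in $N_K(E)$ to strong closure in a Sylow $2$-subgroup of $K$; by contrast, the subsequent case analysis is short and uniform across the groups on Goldschmidt's list.
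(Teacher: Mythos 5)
Your argument is correct and is essentially the paper's own proof: both apply Goldschmidt's theorem to $\langle E^K\rangle$, observe that the normalizer of $E$ in this normal closure is a soluble group properly containing $E$, and contradict the hypothesis that $F^*(N_K(E)/E)$ is non-abelian simple. The only cosmetic differences are that you spell out the (easy) promotion of strong closure from $N_K(E)$ to a Sylow $2$-subgroup of $K$ and dispose of the case $N_N(E)=E$ via Burnside, both of which the paper leaves implicit.
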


\begin{proof} Set  $ L= \langle E^{ K}\rangle$.   Since $O_{2'}(K)=1$, we have $O_{2'}(L)=1$.
By Goldschmidt \cite[Theorem A]{Goldschmidt}, we have $ L=  O_2( L)E( L)$ and $ E=  O_2( L)\Omega_1( T)$ where $ T \in \syl_2( L)$ contains $ E$. If $E( L)=1$, then $ E$ is normal in $ K$ and we are done.  Thus $E( L) \neq 1$.
 Goldschmidt additionally states that  $E( L)$ is a direct product of simple groups of type $\PSL_2(q)$, $q \equiv 3,5 \pmod 8$, ${}^2\G_2(3^a)$,  $\SL_2(2^a)$, $\PSU_3(2^a)$, ${}^2\B_2(2^a)$ for some natural number $a$, or the sporadic simple group $\J_1$.
It  follows from the structure of these groups that $N_{ L}( E)$ is a soluble group which is not a $2$-group.
On the other hand, $N_{ L}( E)=  L \cap {N_K(E)}$ is a normal subgroup of $ {N_K(E)}$.  Since $F^*(N_K(E)/E)$ is a non-abelian simple group, we now have $N_{ L}( E)$ is non-soluble which is  a contradiction. This proves the lemma.
\end{proof}

The next lemma is well known (see \cite{JP}), but we sketch a proof. Note that $\Sp_6(2)$ has a unique
$8$-dimensional irreducible module in characteristic $2$ (see \cite[5.4]{Aschbacher2E6} for example).

\begin{lemma}\label{cohom} Suppose
that $G \cong \Sp_6(2)$ and $V$ is the $8$-dimensional irreducible $\GF(2)G$-module. Then $\mathrm H^1(G,V)=0$.
\end{lemma}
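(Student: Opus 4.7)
The plan is to combine Gaschütz's theorem with a five-term inflation-restriction sequence argument on a maximal parabolic of $G$. Since the statement is well known and cited to \cite{JP}, what I give is a plausibility sketch rather than a self-contained proof.

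First I would reduce to a subgroup of odd index. As $|G| = 2^{9}\cdot 3^{4}\cdot 5\cdot 7$ and $V$ is a $\GF(2)$-module, the standard transfer argument gives an injection $\mathrm{H}^1(G,V)\hookrightarrow \mathrm{H}^1(H,V|_H)$ whenever $[G:H]$ is odd (equivalently, Gaschütz applied directly to the extension $1\to V\to E\to G\to 1$). A convenient choice is the maximal parabolic $H = Q{:}L$ stabilizing a maximal isotropic $3$-subspace of the natural $6$-dimensional symplectic module, with $Q = O_2(H)\cong 2^{6}$, $L\cong \mathrm{L}_3(2)$, and $[G:H]=135$, so it suffices to prove $\mathrm{H}^1(H,V|_H)=0$.

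Second I would apply the five-term sequence
$$ 0 \longrightarrow \mathrm{H}^1(L, V^Q) \longrightarrow \mathrm{H}^1(H, V|_H) \longrightarrow \mathrm{H}^1(Q, V|_Q)^{L}. $$
The $8$-dimensional spin module restricted to the Siegel parabolic admits an explicit filtration whose successive $L$-quotients are the trivial module, the natural $3$-dimensional module for $\mathrm{L}_3(2)$, and its dual, coming from the highest-weight description of $V$. Since $\mathrm{H}^1(\mathrm{L}_3(2),-)$ is known on each of these factors, a short chase gives $\mathrm{H}^1(L, V^Q)=0$. The right-hand term equals $\Hom_L(Q, V|_Q)$, which is computed by matching the $L$-composition factors of $Q$ (the symmetric square of the natural module) against those of $V|_Q$; this produces a finite, bounded calculation.

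The main obstacle, as is typical with such arguments, is the bookkeeping: making the filtration of $V|_H$ fully explicit and pinning down the $L$-action on $\mathrm{H}^1(Q, V|_Q)=\Hom(Q,V|_Q)$ so that both the kernel and the image of the middle map in the five-term sequence can be shown to vanish. Because this is a routine finite check, easily automated in GAP or Magma, the authors reasonably sidestep it by appealing to \cite{JP}.
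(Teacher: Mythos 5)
Your strategy---restrict to the odd-index Siegel parabolic $H=Q{:}L$ with $Q\cong 2^6$, $L\cong \L_3(2)$ and $[G:H]=135$, then run inflation--restriction---is a legitimate line of attack and genuinely different from the paper's, but as written it contains two concrete errors and stops short of the decisive computation. First, the identification of the third term of the five-term sequence is wrong: $\mathrm H^1(Q,V|_Q)$ equals $\Hom(Q,V|_Q)$ only when $Q$ acts trivially on $V$, and here $Q$ acts faithfully on $V$ (the module is faithful and $Q\neq 1$), so $C_V(Q)$ is a proper submodule and $\mathrm H^1(Q,V|_Q)$ is a quotient of the group of derivations $Q\to V$, not a Hom-group. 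That term is exactly where a putative nonzero class of $\mathrm H^1(G,V)$ would have to be detected, so it cannot be reduced to ``matching composition factors of $Q$ against those of $V|_Q$.'' Second, your filtration of $V|_H$ is off: the spin module restricted to the Levi is the exterior algebra of the natural $3$-dimensional module, with $L$-composition factors of dimensions $1,3,3,1$ (trivial, natural, dual natural, trivial), whereas the three factors you list total dimension $7$. The kernel term $\mathrm H^1(L,V^Q)$ is in fact the easy part ($V^Q$ is the $1$-dimensional bottom factor and $\L_3(2)$ is perfect), but deferring the cokernel analysis to ``a routine finite check'' leaves the lemma unproved, since the vanishing is the entire content of the statement.

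For comparison, the paper's proof is three lines and needs none of this machinery: take $d\in G$ of order $3$ with $C_G(d)\cong 3\times \Sp_4(2)$; such $d$ acts fixed-point-freely on $V$, so in any extension $W$ of the trivial module by $V$ the subspace $C_W(d)$ is $1$-dimensional, coincides with $C_W(S)$ for a Sylow $3$-subgroup $S$ containing $d$ (coprime action guarantees $C_W(S)\neq 0$), and hence is invariant under $\langle S, C_G(d)\rangle = G$, splitting the extension. If you want a self-contained argument, locating that fixed-point-free $3$-element is the single idea to find; it replaces all of the parabolic bookkeeping.
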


\begin{proof} Let $d \in G$ be an element of order $3$
with centralizer $3 \times \Sp_4(2)$. Then $d$ acts fixed point freely on $V$. Suppose that $W > V$ and $W/V$ is
$1$-dimensional. Then, letting $S$ be a Sylow $3$-subgroup containing $d$, we have   $C_W(d) = C_W(S)$ is
$1$-dimensional and is invariant under $G=\langle S,C_G(d)\rangle$. Hence $\mathrm H^1(G,V)=0$.
\end{proof}
Our final identification of $\F_2$ requires us to have information about the action of $\Co_2$ on its natural
$22$-dimensional $\GF(2)$-representation. We remark that we do not use the fact that this module is unique.
Before we discuss this module however, we need some facts about $\PSU_6(2){:}2$ and its irreducible $20$-dimensional module over $\GF(2)$.
Let $X= \PSU_6(2){:}2$. Then, by  \cite[(23.2)]{Aschbacher3T} and  \cite[Proposition 4.9.2]{GLS3},  $X'$ has three $X$-classes of involutions with representatives and $t_1, t_2$ and $t_3$ and $X\setminus X'$ contains two $X$-classes of involutions with representatives  $t_4$ and $t_5$. Furthermore, we have
\begin{eqnarray*}
C_X(t_1) &\approx& 2^{1+8}_+{:}\SU_4(2). 2; \\
C_X(t_2) &\approx& 2^{4+8}.(\Sym(3) \times \Sym(3)).2; \\
C_X(t_3) &\approx& 2^{9}.3^2.\Q_8.2 \le 2^9{:}\L_3(4).2;\\
C_X(t_4) &\approx& 2\times \Sp_6(2); \text{ and}\\
C_X(t_5) &\approx& 2 \times (2^5{:}\Sp_4(2)).\\
\end{eqnarray*}

\begin{proposition}\label{Vaction} Suppose that $X = \PSU_6(2){:}2$ and $V$ is the irreducible $\GF(2)X$-module of dimension $20$.
  Then the following hold:
\begin{enumerate}
\item $\dim C_V(t_1) = 14$, $[V,t_1]$ is the orthogonal module and $C_V(t_1)/[V,t_1]$ is
the unitary module for $C_X(t_1)/O_2(C_X(t_1)) \cong \SU_4(2)$; \item  $\dim C_V(t_2) = 12$ and $C_V(t_2)/[V,t_2]$ is the natural  orthogonal module for  $C_X(t_2)/O_2(C_X(t_2)) \cong \Omega_4^+(2)$;
\item $\dim C_V(t_4) = 14$ and
 $[V,t_4]$ is the symplectic module and $C_V(t_4)/[V,t_4]$ is the spin module for  $C_X(t_4)/O_2(C_X(t_4)) \cong \Sp_6(2)$; and
\item  $\dim C_V(t_3) = \dim C_V(t_5)= 10$.
\end{enumerate}

\end{proposition}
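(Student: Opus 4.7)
The starting point is that any involution $t$ on a $\GF(2)$-module satisfies $(t-1)^2 = 0$, so $[V,t] \subseteq C_V(t)$ and
\[\dim C_V(t) + \dim[V,t] = \dim V = 20.\]
Consequently each dimension statement in (i)--(iv) is equivalent to computing $\dim [V,t_i]$, while the module-theoretic content of (i)--(iii) is to identify $[V,t_i]$ and $C_V(t_i)/[V,t_i]$ as modules for $C_X(t_i)/O_2(C_X(t_i))$.

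The plan is to realise $V$ concretely, most conveniently as the unique $20$-dimensional composition factor of the restriction to $\PSU_6(2){:}2$ of the $27$-dimensional minuscule $\GF(2)$-module for ${}^2\E_6(2)$, whose structure and involution actions are worked out in detail in \cite{Aschbacher2E6}. With $V$ realised in this way, the Jordan-block data of each $t_i$ on $V$, and in particular $\dim[V,t_i]$, is inherited from its well-known action on the ambient $27$-dimensional module. For $t_1$ (a long-root involution) this gives $\dim [V,t_1] = 6$, and $[V,t_1]$ is the span of the long-root $t_1$-orbit, hence the natural $6$-dimensional orthogonal module for $C_X(t_1)/O_2(C_X(t_1)) \cong \SU_4(2){:}2 \cong \Omega_6^-(2){:}2$; the quotient $C_V(t_1)/[V,t_1]$ is then a faithful $8$-dimensional $\GF(2)\SU_4(2)$-module, which must be the unitary module as this is the unique such irreducible. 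An analogous direct analysis of the $2$-local structure of $C_X(t_2)$ handles (ii), and the dimension counts in (iv) for $t_3$ and $t_5$ follow by restriction to their $2$-local centralisers once (i)--(iii) are in hand.

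For (iii), $O_2(C_X(t_4)) = \langle t_4\rangle$ acts trivially on $V$, so the quotient $C_X(t_4)/O_2(C_X(t_4)) \cong \Sp_6(2)$ acts on $V$ with composition factors summing to dimension $20$; together with $\dim [V,t_4] = 6$ this forces the $\Sp_6(2)$-composition series of $V$ to consist of two copies of the natural symplectic module (dimension $6$) flanking one copy of the $8$-dimensional spin module, in the arrangement symplectic $\subset$ spin $\subset$ symplectic. Lemma~\ref{cohom} then pins down the exact submodule series $0 < [V,t_4] < C_V(t_4) < V$ by ruling out extensions that would force $C_V(t_4)/[V,t_4]$ to be anything other than the spin module. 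The hard part is precisely this identification — showing that the middle composition factor must be the spin module and sits in the correct position — where the cohomological vanishing provided by Lemma~\ref{cohom} is indispensable.
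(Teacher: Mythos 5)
The first thing to note is that the paper does not actually prove this proposition: it is imported wholesale, proof and all, by the citation ``\cite[Proposition 2.2]{PSS}'', so there is no in-paper argument to compare yours against. Judged on its own terms, your outline has the right skeleton (the identity $\dim C_V(t)+\dim[V,t]=20$ for an involution in characteristic $2$, and the strategy of pinning down $[V,t_i]$ and $C_V(t_i)/[V,t_i]$ as modules for the Levi sections of the centralizers), but the steps that carry the real content are asserted rather than proved, and one of them rests on a lemma that cannot deliver what you ask of it.

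Concretely, there are three gaps. First, your realisation of $V$ inside the $27$-dimensional module for ${}^2\E_6(2)$ is the awkward choice: that module is a $\GF(4)$-module ($54$-dimensional over $\GF(2)$), you do not justify that the restriction to a subgroup $\PSU_6(2){:}2$ has the $20$-dimensional $\GF(2)$-module as a composition factor, and the claim that the Jordan type of each $t_i$ on $V$ is ``inherited'' from a ``well-known'' action is doing all the work for five classes at once --- including the outer classes $t_4,t_5$, which act $\GF(4)$-semilinearly, so nothing about their $\GF(2)$-Jordan type is immediate. The natural realisation is $V\otimes\GF(4)\cong\Lambda^3(W)$ for $W$ the natural $6$-dimensional unitary module; there the inner classes have Jordan type $2^a1^{6-2a}$ on $W$ for $a=1,2,3$ and a direct wedge computation gives $\dim[V,t_1]=6$, $\dim[V,t_2]=8$, $\dim[V,t_3]=10$, while $t_4$ restricts $V$ to $\Lambda^3$ of the natural $\Sp_6(2)$-module. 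Second, the dimensions in (iv) are said to ``follow by restriction \dots once (i)--(iii) are in hand'', but nothing in (i)--(iii) determines $\dim C_V(t_5)$ for the second outer class; a separate computation is unavoidable. Third, Lemma~\ref{cohom} states $\mathrm H^1(\Sp_6(2),V_8)=0$: it controls extensions of the trivial module by the spin module and can split a particular extension, but it cannot identify an $8$-dimensional $\Sp_6(2)$-section as the spin module, nor exclude that the middle section of $V$ has composition factors of dimensions $6,1,1$ or $1^8$. That identification needs the known submodule structure of $\Lambda^3$ of the natural symplectic module (factors $6,8,6$) or a Brauer-character count, neither of which your sketch supplies. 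So the proposal is a reasonable outline, but each of these three points is a genuine missing argument rather than a routine detail.
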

\begin{proof}
See  \cite[Proposition 2.2]{PSS}.
\end{proof}

\begin{lemma}\label{Co2} Suppose that $G \cong \Co_2$, $H
\le G$ is isomorphic to $\PSU_6(2){:}2$ and $V$ an irreducible $\GF(2)G$-module of dimension $22$  in
which $H$ fixes a $1$-dimensional subspace and has a $20$-dimensional composition factor. Let $\widehat G$ be a
group with $\widehat G/O_2(\widehat G) \cong G$ and $O_2(\widehat G)$ isomorphic to $V$ as a $\widehat
G/O_2(\widehat G)$-module. Then the following hold:
\begin{enumerate}
\item $G$ has three conjugacy classes of involutions with representatives $s_1$, $s_2$, $s_3$ and centralizers
\begin{eqnarray*}C_{G}(s_1) &\approx & 2^{1+8}_+.\Sp_6(2);\\ C_{G}(s_2) &\approx & (2^{1+6}_+ \times
2^4).\Alt(8); \text{ and }\\C_{G}(s_3) &\approx & 2^{10}.\Aut(\Alt(6)).\end{eqnarray*}

\item Every involution of $G$ is conjugate to an involution of $H'$.
\item Every involution of $\widehat G\setminus O_2(\widehat G)$ centralizes an element of order $3$.
\item If $S$ is a Sylow 2-subgroup of $\widehat G$, then $O_2(\widehat G) = J(S)$.

\end{enumerate}
\end{lemma}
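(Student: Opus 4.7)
The plan is to treat the four claims in order. For part (i), I would cite the standard description of the involution classes of $\Co_2$ and their centralizers from the \ATLAS~\cite{Atlas} and \cite{GLS3}. For part (ii), I would compare the five involution classes of $H \cong \PSU_6(2){:}2$ listed just before Proposition~\ref{Vaction} with the three classes of $G$ from (i): matching extraspecial ingredients (the $2^{1+8}_+$ common to $C_H(t_1)$ and $C_G(s_1)$, the $2^{1+6}_+$ common to $C_H(t_2)$ and $C_G(s_2)$) together with centralizer-order comparisons pin down $t_i \sim_G s_i$ for $i = 1, 2, 3$. Since $t_4, t_5 \in H \setminus H'$, each must be $G$-conjugate to some $s_i$, whose $H$-representative $t_i$ lies in $H'$; hence every involution of $G$ meets $H'$.

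For part (iii), let $\widehat{s} \in \widehat{G} \setminus V$ be an involution, so $s := \widehat{s}V$ is an involution of $G$, and by (i) the centralizer $C_G(s)$ contains an element $d$ of order~$3$. In the preimage $\widehat D$ of $\langle s,d\rangle = \langle s\rangle \times \langle d\rangle$ in $\widehat G$, a coprime-action/Schur--Zassenhaus argument produces a $3$-element $\widehat d \in \widehat D$ with $\widehat{s}$ normalising $\langle \widehat d\rangle$. Then $\widehat{s}\widehat{d}\widehat{s}^{-1} \in \{\widehat{d}, \widehat{d}^{-1}\}$; the inverting alternative is ruled out because $\widehat{s}\widehat{d}\widehat{s}^{-1} \in \widehat{d}V$ (as $s$ centralises $d$ in $G$), which would force $\widehat{d}^{-2} \in V$ and hence $\widehat{d} \in V$, contradicting $|\widehat{d}| = 3$. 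Therefore $\widehat{s}$ centralises $\widehat{d}$, an element of order~$3$.

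For part (iv), let $A \le S$ be an elementary abelian subgroup and set $\bar A := AV/V$, so $A \cap V \le C_V(\bar A)$ and $|A| \le |C_V(\bar A)|\cdot|\bar A|$. If $\bar A = 1$ then $A \le V$ and $|A| \le |V|$ with equality forcing $A = V$. If $\bar A \ne 1$, pick an involution $\bar a \in \bar A$; by (ii) we may assume $\bar a$ is conjugate into $H'$, and Proposition~\ref{Vaction} together with the fact that $V|_H$ has composition factors of dimensions $1$, $20$, and $1$ controls $\dim C_V(\bar a)$ up to a contribution of at most two from the trivial factors. Combining this with the bound $|\bar A| \le 2^{10}$ on elementary abelian $2$-subgroups of $\Co_2$ (attained inside the maximal $2$-local $2^{10}.\Aut(\Alt(6))$ arising from the $s_3$-centralizer), a case-by-case check across the three involution classes yields $|C_V(\bar A)|\cdot|\bar A| < 2^{22}$, contradicting $|A| \ge |V|$. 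Thus $\bar A = 1$ and $J(S) = V = O_2(\widehat G)$. The main obstacle is this last inequality: verifying it for every nontrivial elementary abelian $\bar A$ requires precise control of how $\bar a$ acts on the two trivial composition factors of $V|_H$ together with a careful case analysis linking each $\Co_2$ involution class to the elementary abelian subgroups realised in the relevant $2$-local; parts (i)--(iii) are comparatively routine.
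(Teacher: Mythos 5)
There are genuine gaps in parts (ii), (iii) and (iv); the one in (iii) is fatal. Your claim that a coprime-action/Schur--Zassenhaus argument produces a $3$-element $\widehat d$ with $\widehat s$ normalising $\langle\widehat d\rangle$ is false: $\widehat s$ has order $2$ and $V$ has order $2^{22}$, so nothing in sight is coprime. What Frattini/Schur--Zassenhaus actually gives is that \emph{some} $2$-element of the coset $V\widehat s$ normalises a Sylow $3$-subgroup of the preimage of $\langle d\rangle$; it need not be $\widehat s$ itself, and the involutions of a fixed coset $Vs$ are in general \emph{not} all $\widehat G$-conjugate --- up to $V$-conjugacy they correspond to the points of $C_V(s)/[V,s]$, on which $C_G(s)$ acts with several orbits. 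Whether a given involution $vs$ centralises an order-$3$ lift of some $d\in C_G(s)$ depends on which orbit $v[V,s]$ lies in, and this is precisely the content of (iii). The paper therefore computes, for each class $s_i$, the $C_G(s_i)$-orbits on $(C_V(s_i)/[V,s_i])s_i$: using Proposition~\ref{Vaction} and Lemma~\ref{cohom} to split off the trivial summand for $\Sp_6(2)$ and obtain orbits of lengths $1,135,120$ for $s_1$; using \cite[Theorem 1]{MS} to identify $C_V(s_2)/[V,s_2]$ as the $6$-dimensional orthogonal module for $\Alt(8)$ with orbits $1,35,28$ or $8,56$; and using $C_W(s_3)=[W,s_3]$ to force $[O^2(C_G(s_3)),C_V(s_3)]\le[V,s_3]$. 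Only then does one check that every orbit stabiliser has order divisible by $3$. None of this is automatic, and your argument bypasses the entire difficulty.

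In (iv) you concede the problem yourself: $|C_V(\bar a)|\le 2^{14}$ together with $|\bar A|\le 2^{10}$ gives only $|A|\le 2^{24}$, which does not contradict $|A|\ge 2^{22}$. The missing idea is Thompson replacement: an offending elementary abelian $A$ may be taken to act quadratically on $V$, and by \cite[Lemma 2.19]{MS} a quadratically acting subgroup of $\Co_2$ on $V$ has order at most $4$, whence $|A|\le|C_V(\bar A)|\cdot|\bar A|\le 2^{14}\cdot 2^{2}<2^{22}$. Finally, in (ii) your matching rests partly on a false structural claim ($C_H(t_2)\approx 2^{4+8}.(\Sym(3)\times\Sym(3)).2$ contains no distinguished $2^{1+6}_+$), and order comparison alone does not separate $t_2$ from $t_3$ in $G$, since $|C_H(t_2)|=2^{15}\cdot 3^2$ and $|C_H(t_3)|=2^{13}\cdot 3^2$ each divide all three centraliser orders listed in (i). The paper instead distinguishes $t_1,t_2,t_3$ in $G$ by the invariants $\dim C_W(t_i)/[W,t_i]=8,4,0$ from Proposition~\ref{Vaction}, which can increase by at most $2$ in passing from $W$ to $V$ and hence remain pairwise distinct; since $G$ has exactly three classes, (ii) follows, and the identification $s_2\sim_G t_2$ is then pinned down by the absence of $3^2{:}\Q_8$ in $\Alt(8)$. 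That module-theoretic non-fusion argument is also what gets reused in (iii) and (iv), so you should adopt it rather than the centraliser-matching sketch.
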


\begin{proof} The conjugacy classes of involutions in $G$ and their centralizers   are given in \cite[Table 5.3k]{GLS3}.  Thus (i) holds.

 We adopt the notation $t_1$, $t_2$, $t_3$, $t_4$ and $t_5$ for involutions in $H\cong \PSU_6(2){:}2$
introduced before Proposition~\ref{Vaction}  and denote the $20$-dimensional composition factor  of $V$ by $W$.
Obviously $s_1 \sim _Gt_1$. From  Proposition~\ref{Vaction} (i), we have $\dim C_W(t_1)/[W,t_1]= 8$, $\dim
C_W(t_2)/[W,t_2] = 4$ and $\dim C_W(t_3)/[W,t_3]=0$ and in each instance $C_H(t_i)$ acts irreducibly on
$C_W(t_i)/[W,t_i]$. It follows that the corresponding  quotients in $V$ can only change dimension by increasing at
most  $2$. Therefore $t_1$, $t_2$ and $t_3$ do not fuse in $G$. This shows  (ii) holds and, moreover, $s_1 \sim _Gt_1$. As
there is no subgroup isomorphic to $3^2{:} \Q_8$ in $\Alt(8)$ we see with Proposition~\ref{Vaction}  that
$s_2 \sim_G t_2$ and therefore $s_3\sim_Gt_3$.

For (iii) we identify $\widehat G/O_2(\widehat G)$ with $G$ and $O_2(\widehat G)$ with $V$. We are required to determine the involutions in each coset $Vs_i$,
$i=1,2,3$, and show that each one is centralized by a $3$-element.

Since $C_H(t_1)$ acts as $\SU_4(2)$ on $C_W(t_1)/[W,t_1]$, we infer that $|C_V(s_1)/[V,s_1]|=2^8$ or $2^{10}$ and
admits a faithful action of $C_G(s_1)/O_2(C_G(s_1))\cong \Sp_6(2)$. By Lemma~\ref{cohom}  the $8$-dimensional module for
$\Sp_6(2)$ splits with the trivial module so   the  orbits of $C_G(s_1)$ on
$(C_{V}(s_1)/[V,s_1])s_1$ have lengths $1, 135,$ and $120$. In particular, every involution in $Vs_1$ is
centralized by an element of order $3$.

Using \cite[Theorem 1]{MS}, we obtain that $C_V(s_2)$ has dimension $14$ and  $C_{V}(s_2)/[V, s_2]$  is the
irreducible $6$-dimensional orthogonal module for $\Alt(8)$. Therefore on the coset $(C_{V}(s_2)/[V,s_2])s_2$ we have
$\Alt(8)$-orbits of length 1, 35 and 28 or 8 and 56 depending upon whether the $\Alt(8)$-space $(\langle
C_{V}(s_2),s_2\rangle/[V,s_2])$ splits as a direct sum or is indecomposable as an $\Alt(8)$-module. In both cases
any involution in $Vs_2$ is centralized by a Sylow 3-subgroup of $C_{G}(s_2)$.

Finally consider the coset $Vs_3$. Then,   as seen above, we have $[W,s_3] = C_W(s_3)$. Hence $|C_{V}(s_3)/[V,
s_3]| \leq 4$. In particular, $[O^2(C_{G}(s_3)), C_{V}(s_3)] \leq [V,s_3]$ and so again all involutions in this
coset are centralized by a Sylow 3-subgroup of $C_{G}(s_3)$.

For part (iv) we simply note that the largest quadratically acting subgroup of $G$ has order $4$ by \cite[Lemma
2.19]{MS} and thus the result follows from our preceding remarks on centralizers of involutions on $V$ in the
proof of (iii).
\end{proof}

\begin{lemma}\label{perp} Suppose that $p$ is a prime, $E$ is an extraspecial $p$-group and $y\in \Aut(E)$ centralizes $Z(E)$.
Let $F$ be the preimage of $C_{E/Z(E)}(y)$. Then $C_E(F)=[E,y]$ and $C_E([E,y]) = F$.
\end{lemma}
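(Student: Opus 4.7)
The plan is to reduce both identities to linear algebra on the $\GF(p)$-vector space $V = E/Z(E)$. Since $E$ is extraspecial with $|Z(E)|=p$, commutation induces a non-degenerate alternating $\GF(p)$-bilinear form $\langle\ov a,\ov b\rangle = [a,b]$ on $V$ taking values in $Z(E)\cong\GF(p)$; the hypothesis that $y$ centralizes $Z(E)$ ensures that the induced automorphism $\ov y$ of $V$ preserves this form. A direct consequence of non-degeneracy is that for every subgroup $A\le E$ one has $C_E(A)\supseteq Z(E)$ and $C_E(A)/Z(E) = \ov A^\perp$, where $\ov A$ denotes the image of $A$ in $V$. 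This reduces the lemma to the identification of $\ov F$ and $\ov{[E,y]}$ inside $V$.

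By definition $\ov F = C_V(\ov y)$, and reduction modulo $Z(E)$ sends $[a,y] = a^{-1}a^y$ to $(\ov y - 1)\ov a$, so $\ov{[E,y]} = [V,\ov y]$. The key linear-algebra ingredient is the standard orthogonality $C_V(\ov y)^\perp = [V,\ov y]$ for a symplectic automorphism $\ov y$: one inclusion follows from the computation $\langle (\ov y - 1)v, w\rangle = \langle v,\ov y^{-1}w\rangle - \langle v,w\rangle = 0$ whenever $w\in C_V(\ov y)$, and the reverse from a rank--nullity count using non-degeneracy. Combining these yields $C_E([E,y])/Z(E) = [V,\ov y]^\perp = C_V(\ov y) = \ov F$, hence $C_E([E,y]) = F$ immediately; dually $C_E(F)/Z(E) = C_V(\ov y)^\perp = [V,\ov y] = \ov{[E,y]}$, so $C_E(F) = [E,y]\cdot Z(E)$.

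The one remaining point, which I expect to be the only mildly non-formal step, is the upgrade from $C_E(F) = [E,y]\cdot Z(E)$ to $C_E(F) = [E,y]$; equivalently, $Z(E)\le [E,y]$, valid under the implicit non-triviality hypothesis $y\ne 1$. The main observation is that $[E,y]$ is normal in $E$ via the identity $[a,y]^g = [ag,y][g,y]^{-1}$. If $[E,y]\not\le Z(E)$, pick $c\in [E,y]\setminus Z(E)$; non-degeneracy of the form on $V$ yields $b\in E$ with $[c,b]$ generating $Z(E)$, and normality of $[E,y]$ forces $[c,b]\in [E,y]$, whence $Z(E)\le [E,y]$. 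Otherwise $[E,y]\le Z(E)$ is a non-trivial subgroup of the order-$p$ cyclic group $Z(E)$ and therefore coincides with it. Either way $Z(E)\le [E,y]$, and the lemma follows.
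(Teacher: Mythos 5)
Your proof is correct and follows essentially the same route as the paper: both reduce to the non-degenerate symplectic form on $E/Z(E)$ and finish with an order/dimension count, the only cosmetic difference being that the paper obtains the key inclusion $[E,y]\le C_E(F)$ via the Three Subgroup Lemma (using $[F,y]\le Z(E)$ and $E'=Z(E)$) where you compute directly that $\ov y$ preserves the form. Your final paragraph also correctly flags a point the paper's proof passes over silently: one needs $Z(E)\le [E,y]$ to pass from $C_E(F)=[E,y]Z(E)$ to $C_E(F)=[E,y]$, which holds exactly when $y\neq 1$ (a non-trivial $[E,y]$ is normal in the $p$-group $E$ and so meets the order-$p$ centre), and for $y=1$ the lemma as stated fails.
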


\begin{proof} Recall from \cite[III(13.7)]{Huppert} that $E/Z(E)$ supports a non-degenerate symplectic form which is defined by commutation.
Thus for $Z(E) \le W \le E$, $(W/Z(E))^\perp = C_E(W)/Z(E)$.

We have $[F,y,E]\le [Z(E),E]=1$ and $[E,F,y]\le [Z(E),y]=1$. Thus $[E,y,F]=1$ by the Three Subgroup Lemma and so
$[E,y]/Z(E)\le (F/Z(E))^\perp$. Since $|[E,y]/Z(E)|= |E:F|$, we now have $[E,y]/Z(E)= (F/Z(E))^\perp$. Hence $C_E(F)=
[E,y]$ and $C_E([E,y])= F$.
\end{proof}

For the convenience of the reader we recall the following definitions from \cite{ParkerRowley} and \cite{PSS} where it is   proved that the  groups with socle $\Co_2$, ${}^2\E_6(2)$ and $\M(22)$
are uniquely determined by their $3$-centralizers once a weak closure condition is imposed.

\begin{definition}\label{co2} We say that $X$ is similar to a $3$-centralizer in $\Co_2$ provided
$O_3(X)$ is extraspecial of order $3^5$, $O_2(X/O_3(X))$ is extraspecial of order $2^5$ and $X/O_{3,2}(X) \cong
\Alt(5)$.
\end{definition}

\begin{definition}\label{d1} Suppose that $X$ is a group with $Q=F^*(X)$  extraspecial of order $3^{1+6}$ and $Z(F^*(X))
=Z(X)$. Then\begin{enumerate}\item  $X$ is similar to a $3$-centralizer in a group of type ${}^2\E_6(2)$ provided
$O_{2,3}(X)/Q \cong \Q_8\times \Q_8\times \Q_8$; and
\item $X$ is similar to a $3$-centralizer in a group of type  $\M(22)$
provided $O_{2,3}(X)/Q$ acts on $F^\ast(X)$ as a subgroup of order $2^7$ in $\Q_8\times \Q_8\times \Q_8$ which
contains $Z(\Q_8\times \Q_8\times \Q_8)$.
\end{enumerate}
\end{definition}

Finally for this section we collect the following facts about ${}^2\E_6(2)$ from \cite{PSS}.

\begin{proposition}\label{2e62facts} Suppose that $X \cong {}^2\E_6(2){:}2$ and that $J =C_X(y)$ is the centralizer of a $3$-central element $y$ in $X$.  Let $I = O_3(J)$ and $S \in \Syl_2(X)$. Then
\begin{enumerate}
\item $J$ is similar to a $3$-centralizer in ${}^2\E_6(2)$;
\item $y$ is inverted in $X$;
\item $y$ is $X$-conjugate to an element of $I \setminus \langle y\rangle$;
\item There is an element $\tau\in I$ such that $C_X(\tau)\cong 3 \times \PSU_6(2).2$  and $C_S(\tau) \in \Syl_3(C_X(\tau))$;
\item  The involutions $s \in O_{3,2}(J)$ such that $C_I(s) \cong 3^{1+4}_+$ are $2$-central in $X$; and
\item If $s$ is a $2$-central involution in $X$, then $C_X(s) \approx 2^{1+20}_+.\PSU_6(2).2$, $F^*(C_X(s))$ is extraspecial of order $2^{21}$ and $C_X(s)$ acts absolutely irreducibly on $F^*(C_X(s))/\langle s \rangle$.
\end{enumerate}
\end{proposition}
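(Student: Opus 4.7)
The statement gathers six structural facts about $X={}^2\E_6(2){:}2$ and its 3-centralizer $J$; my plan is to verify them by combining the twisted Chevalley description of ${}^2\E_6(2)$ with \ATLAS\ data, treating the items in three blocks. For (i)--(iii), I would identify $y$ with a semisimple 3-element in a maximally split torus whose centralizer in the ambient algebraic group is a Levi of type $3A_2$. Taking $\GF(2)$-rational points and intersecting with $X$ produces a subgroup of shape $3^{1+6}_+.(\Q_8\circ \Q_8 \circ \Q_8).(\SU_3(2).2)$, matching Definition~\ref{d1}(i); in particular $Q=O_3(J)$ is extraspecial of plus type of order $3^7$ with $Z(Q)=\langle y\rangle$, which gives (i). For (ii), the Weyl group of the Levi contains an involution inverting the torus, and a lift of this element to $N_X(T)$ inverts $y$. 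For (iii), the group $N_X(Q)/Q$ contains a subgroup isomorphic to $\SU_3(2).2$ acting on $Q/\langle y\rangle$, and its orbits include images of $y$ in $Q\setminus\langle y\rangle$; this can be cross-checked against the \ATLAS\ power maps and class sizes for the two classes of order-3 elements of ${}^2\E_6(2)$.

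For (iv), I would take $\tau\in I$ corresponding to a short-root element whose centralizer is the standard Levi complement $3\times \PSU_6(2).2$ inside $X$; the Sylow statement then follows on comparing the 3-parts of $|C_X(\tau)|$ and $|S|$. For (v), the hypothesis $C_I(s)\cong 3^{1+4}_+$ forces $s$ to act on $Q/\langle y\rangle$ with a 4-dimensional fixed space and a 2-dimensional $(-1)$-eigenspace, so $3^5$ divides $|C_X(s)|$; comparing against the \ATLAS\ list of involution centralizer orders in $X$, only the 2-central class is consistent with this action, which establishes (v).

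For (vi), the description $C_X(s)\approx 2^{1+20}_+.\PSU_6(2).2$ with $F^*(C_X(s))$ extraspecial of plus type of order $2^{21}$ and centre $\langle s\rangle$ is a standard long-root subgroup calculation in the Lie-type group $X$, and the action on $F^*(C_X(s))/\langle s\rangle$ is the 20-dimensional module analysed in Proposition~\ref{Vaction}. The main obstacle is absolute irreducibility of $\PSU_6(2).2$ on this $\GF(2)$-quotient, which does not follow from irreducibility over $\GF(2)$ alone. I would deduce it by computing the Brauer character of the 20-dimensional 2-modular representation of $\PSU_6(2).2$ and observing that it is irreducible and rational over $\GF(2)$ (as extracted from the modular \ATLAS), so the module is absolutely irreducible.
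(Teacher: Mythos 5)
The paper does not prove these six statements from first principles: its entire proof is a citation to the companion paper \cite{PSS} (Theorem~1 there gives (i), Lemma~4.6(v) gives (ii), Lemma~4.5 gives (iii), Lemma~7.1 gives (iv), and Theorem~10.4 gives (v) and (vi)). Your plan to reprove them directly from the Lie-type structure of ${}^2\E_6(2){:}2$ together with \ATLAS data is therefore a genuinely different route; unfortunately, as written it contains errors and under-justified steps that would have to be repaired before it could replace the citation.

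The most concrete problem is in your treatment of (i). Definition~\ref{d1}(i) requires $O_{2,3}(J)/Q \cong \Q_8\times\Q_8\times\Q_8$, a \emph{direct} product of order $2^9$; the shape $3^{1+6}_+.(\Q_8\circ\Q_8\circ\Q_8).(\SU_3(2).2)$ you propose has a central product of order $2^7$ in that position, so it does not ``match Definition~\ref{d1}(i)''. Worse, its $3$-part is $3^7\cdot|\SU_3(2)|_3=3^{10}$, which exceeds $|X|_3=3^9$, so the group you describe cannot be $C_X(y)$ at all: the correct order is $2^{10}\cdot 3^9$ with top quotient of order $2\cdot 3^2$ (compare Lemma~\ref{c3}(i), which exhibits exactly this group as $C_H(\pi)/\langle\pi\rangle$). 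Passing from the $3A_2$ subsystem subgroup to the precise direct-product statement about $O_{2,3}(J)/Q$ is the nontrivial content of (i), and your sketch elides it. There are further issues. For (ii), the Weyl group of a Levi of type $3A_2$ does not contain $-1$ on the torus (indeed $-1\notin W(A_2)$); one needs the longest element of the ambient twisted Weyl group of type $F_4$, which does act as $-1$. For (iii), the \ATLAS lists three classes of elements of order $3$ in ${}^2\E_6(2)$, not two, so the proposed cross-check is misdescribed. For (v), the bare observation that $3^5$ divides $|C_X(s)|$ is not yet an argument: you must exhibit the $3$-parts of the centralizers of all inner involution classes and show the non-$2$-central ones are too small (or argue via fusion), which is precisely what \cite{PSS}, Theorem~10.4, does. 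Part (vi), including absolute irreducibility of the $20$-dimensional module read off from its Brauer character, is the one item where your plan is sound and standard.
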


\begin{proof} As remarked after the main theorems in \cite{PSS}, as a consequence of
\cite[Theorem 1]{PSS}, all the lemmas of that paper provide statements about the groups with socle ${}^2\E_6(2)$.
In  particular, (i) holds, part (ii) comes from \cite[Lemma 4.6(v)]{PSS}, (iii) from  \cite[Lemma 4.5]{PSS}, (iv)
from \cite[Lemma 7.1 (i) and (iii)]{PSS} and (v) and (vi) from \cite[Theorem 10.4]{PSS}.
\end{proof}

\section{A construction of $3$-centralizers of type $\F_2$ and $\M(23)$}

Let $W$ be  the natural $2$-dimensional symplectic $\GF(3)$-module for $\GL(W)\cong \GSp(W)$ with natural
symplectic basis $\{e,f\}$ and associated symplectic form $(\;,\;)$. The space  $V= W \otimes W \otimes W$
supports a symplectic form defined by $$(v_1\otimes v_2\otimes v_3,w_1\otimes w_2 \otimes w_3) =
\prod_{i=1}^3 (v_i,w_i).$$  Furthermore $V$  admits $\bar X^*=\GSp(W) \wr \Sym(3)$ where a base group element
$(x,y,z)$ acts on the vector$(v_1\otimes v_2\otimes v_3)$  by $$(v_1\otimes v_2\otimes v_3)(x,y,z) = (v_1x\otimes
v_2y\otimes v_3z)$$ and the permutation group permutes the tensor factors. We let $\bar X$ be the subgroup of
$\bar X^*$ which preserves the form on $V$. Thus $\bar X$ has index $2$ in $\bar X^*$ and has shape $(\Q_8
\times \Q_8\times \Q_8).3^3.2^2.\Sym(3)$.

With this action we obtain a representation of $\bar X$ into $\Sp(V)$ with  kernel $K=\langle (-I,-I,I),
(I,-I,-I)\rangle$  where $I$ is the identity transformation in $\Sp(W)$. Thus $\bar X$ has image $X \cong \bar
X/K$ in $\Sp(V)$. We have $R=O_2(X) \cong 2^{1+6}_-$ and $X/R  \approx 3^3.\Sym(4)$.

Denote by $d$  the element of $\GSp(W)$ which centralizes $e$ and sends $f$ to $e+f$. Then  $d$ has order $3$.
Let   $d_1 = (d,I,I)$, $d_2 = (d,d^{-1},I)$,  $d_3 = (d,d,d)$, $\sigma= (-I,-I,-I)$, $\pi$ be the element which
permutes the base group of $\bar X$ as $(1,2)$ and $\tau$ the element which permutes the base group as $(1,2,3)$.
We let $D = \langle d_1,d_2,d_3\rangle$ and note that $D\langle \tau\rangle$ is a Sylow $3$-subgroup of $\bar X$,
its derived group is $\langle d_2,d_3\rangle$ and centre is $\langle d_3\rangle$. Notice that $\pi$ centralizes
$\langle d_1d_1^\pi, d_3\rangle$. We identify all these elements  and subgroups with their images in $X$.

\begin{lemma}\label{c1} The following hold:
\begin{enumerate}
\item $C_V(d_1) = [V,d_1]=\langle e\otimes x\otimes y\mid x,y \in \{e,f\}\rangle$ has dimension 4 and is totally isotropic;
 \item $C_V(d_2) = \langle e\otimes e\otimes x, (e \otimes f+ f\otimes e)\otimes x \mid x \in \{e,f\}\rangle$,
 $[V,d_2]=\langle e\otimes e\otimes x,(e \otimes f- f\otimes e)\otimes x  \mid x \in \{e,f\}\rangle$ have dimension $4$ and
 $[V,d_2,d_2]= \langle e\otimes e\otimes x\mid x \in \{e,f\}\rangle$ has dimension $2$. In particular, $d_2$ is not quadratic and $C_V(d_2)$ is not totally
 isotropic;
\item $C_V(d_3)= \langle e \otimes e \otimes e, f \otimes e \otimes e-e \otimes f \otimes e,
f \otimes e \otimes e-e \otimes e \otimes f\rangle$ has dimension $3$,
\begin{eqnarray*}[V,d_3]&=& \langle e \otimes e \otimes e,
 e\otimes f\otimes f+f\otimes f\otimes e+ f\otimes e\otimes f, \\&&
 f \otimes e \otimes e+e \otimes f \otimes e,
f \otimes e \otimes e+e \otimes e \otimes f,
 e \otimes f\otimes e+e \otimes e\otimes f\rangle\end{eqnarray*} has dimension $5$ and
$[V,d_3,d_3]= \langle e \otimes e \otimes e,
 e\otimes e\otimes f+e\otimes f\otimes e+ f\otimes e\otimes e
\rangle$ has dimension $2$.
In particular,  $d_3$ is not quadratic,  $C_V(d_3)< [V,d_3]$ and $C_V(d_3)$ is totally isotropic;
\item $C_V(\langle d_1,d_2\rangle)= \langle e\otimes e \otimes e, e\otimes e\otimes f\rangle$ has dimension $2$;
\item $C_V(\langle d_1,d_3\rangle)=\langle e\otimes e\otimes e, e\otimes e \otimes f -e\otimes f \otimes
e\rangle$ has dimension $2$;
    \item $C_V(\langle d_2, d_3\rangle)= \langle e\otimes e \otimes e, f\otimes e \otimes e+e\otimes f \otimes e+e\otimes e \otimes f\rangle$ has dimension
    $2$;
 \item $C_V(D) = \langle e\otimes e \otimes e\rangle$ has dimension 1; and  \item $C_V(\pi)=\langle x \otimes x \otimes y, (e\otimes  f + f\otimes e)\otimes y\mid x, y \in \{e,f\} \rangle$ has dimension 6.
\end{enumerate}
\end{lemma}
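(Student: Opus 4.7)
The lemma is a sequence of assertions about explicit subspaces of $V = W \otimes W \otimes W$, so my approach is direct linear algebra in the monomial basis $\mathcal B = \{x_1 \otimes x_2 \otimes x_3 : x_i \in \{e,f\}\}$. Two simple pieces of data drive everything: both $d$ and $d^{-1}$ centralize $\langle e \rangle$ and send $f$ to $e+f$ and $-e+f$ respectively, and the tensor-product form pairs $e$ with $f$ non-trivially while $(e,e)=(f,f)=0$. Under these, all eight assertions reduce to short calculations on at most the eight basis monomials.

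For (i), $d_1 - I$ acts as $(d-I) \otimes I \otimes I$, so its kernel and image on $V$ are both $\langle e \rangle \otimes W \otimes W$, which has dimension $4$; total isotropy is immediate because every pairing contains the factor $(e,e)=0$. For (ii) and (iii), I would use the identity
$$
a \otimes b \otimes c - I\otimes I\otimes I = (a-I)\otimes b \otimes c + I \otimes (b-I) \otimes c + I \otimes I \otimes (c-I)
$$
applied to $d_2 = d \otimes d^{-1} \otimes I$ and $d_3 = d \otimes d \otimes d$. Evaluating the three summands on the eight monomials gives a spanning set for $[V,d_i]$ that, after tidying, matches the listed basis, while the centralizer (kernel of $d_i - I$) and the second commutator $[V,d_i,d_i]$ (from a second application of $d_i - I$) drop out of the same table. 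Non-quadraticity of $d_2$ and $d_3$ is visible at once from $[V,d_i,d_i]\ne 0$, and the isotropy/non-isotropy statements follow by pairing the exhibited basis vectors factor-wise using $(e,e)=(f,f)=0$.

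Parts (iv)--(vii) are routine intersections: using the bases already computed in (i)--(iii), each of $C_V(d_i)\cap C_V(d_j)$ and $C_V(D)=\bigcap_i C_V(d_i)$ can be read off by inspection. Finally, (viii) is separate and very short: $\pi$ acts on $V$ as the involution $v_1\otimes v_2\otimes v_3\mapsto v_2\otimes v_1\otimes v_3$, so it fixes the four monomials of the form $x\otimes x\otimes y$ and swaps $e\otimes f\otimes y$ with $f\otimes e\otimes y$ for each $y\in\{e,f\}$, producing two additional symmetric fixed vectors and the claimed $6$-dimensional centralizer. The only piece of real bookkeeping is (iii), where the three-term expansion applied to the three basis vectors with two $f$'s produces overlapping contributions that must be collected carefully; once that is done, the five stated generators of $[V,d_3]$ and the three generators of $C_V(d_3)$ (which also delivers the strict inclusion $C_V(d_3)<[V,d_3]$) are immediate, and I expect this to be the main obstacle simply because the remaining parts involve no genuine cancellation.
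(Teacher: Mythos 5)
Your proposal is correct and follows essentially the same route as the paper, which likewise treats the lemma as a direct computation on the monomial basis, determining $[V,x]$ from commutators with basis vectors, using $\dim V/C_V(x)=\dim[V,x]$ to confirm the centralizers, and settling the isotropy claims by the observation that any pairing of monomials each containing $e$ in at least two slots must include a factor $(e,e)=0$. Your telescoping identity for $d_2-1$ and $d_3-1$ is a harmless organizational variant of the same calculation.
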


\begin{proof} This is a straight-forward calculation.  Note,  before  starting, we know that $\dim V/C_V(x) =\dim [V,x]$ for any $x \in X$. Furthermore, commutator spaces are generated by commutators with basis elements. So first determine $[V,x]$ and then verify that the spaces claimed to be $C_V(x)$ are indeed centralized. To see that $C_V(d_2)$ is not isotropic we calculate,
\begin{eqnarray*}
&&(e\otimes f\otimes e+f\otimes e\otimes e,e\otimes f\otimes f+f\otimes e\otimes f)\\
 &=& (e\otimes f\otimes e, e\otimes f\otimes f)+ (e\otimes f\otimes e,f\otimes e\otimes f) \\&&+ (f\otimes e\otimes e,e\otimes f\otimes f)+ (f\otimes e\otimes e, f\otimes e\otimes f)\\
&=& 0-1-1+0=1.
\end{eqnarray*}
To show that $C_V(d_3)$ is isotropic, just note that each vector in the basis of $C_V(d_3)$  is a sum  of tensors involving  $e$ at least twice.
Hence the form will involve $(e,e)=0$ in each product within the sum.
\end{proof}

\begin{lemma}\label{c2} The following hold:
\begin{enumerate}
\item $C_{ X}(\pi)$ is the image of the subgroup $$\langle (\pm b,b,I), (I,I,c), \pi \mid b \in \GSp(W), c \in \Sp(W)\rangle$$ of $\ov X$;
\item $|C_{X}(\pi)|= 2^7\cdot 3^2$, $C_R(\pi) \cong 2^2 \times \Q_8$ and $C_X(\pi)/C_R(\pi)\langle\pi\rangle \cong \Sym(3) \times 3$; and
\item $[R,\pi]$ is elementary abelian of order $2^3$ and is  the image of
$$\langle (b,b^{-1},I) \mid b \in O_2(\GSp(W))\rangle \ge \langle
 (I,-I,I), (-I,I,I) \rangle
.$$
\end{enumerate}
Furthermore $C_R(\pi) \ge [R,\pi]$.
\end{lemma}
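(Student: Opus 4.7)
The plan is to work in the preimage $\bar X$ and reduce modulo $K$. For part (i), I take an arbitrary $y = (a, b, c)\eta \in \bar X$ with $\eta \in \Sym(3)$ and ask when $[y, \pi] \in K$. Projecting modulo the base $\GSp(W)^3$ yields $[\eta, \pi]$, which must vanish since $K$ lies entirely in the base; so $\eta \in C_{\Sym(3)}(\pi) = \langle \pi \rangle$. A short wreath-product computation then gives $[y, \pi] = (a^{-1}b, b^{-1}a, I)$ (with a parallel expression when $\eta = \pi$), and comparing against the explicit four elements of $K$ forces $b = \pm a$. The form condition $\det(a)\det(\pm a)\det(c) = 1$ then forces $c \in \Sp(W)$, giving the generating set in (i).

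For the cardinalities in (ii), counting the preimage (choices of $a \in \GSp(W)$, $c \in \Sp(W)$, the sign $\pm$, and $\eta \in \{e,\pi\}$) and dividing by $|K| = 4$ gives $|C_X(\pi)| = 1152 = 2^7 \cdot 3^2$; restricting to $\Q_8^3$ (the preimage of $R$) gives $|C_R(\pi)| = 32$. To see $C_R(\pi) \cong 2^2 \times \Q_8$, I consider $P_0 = \{(a, a, c) : a, c \in \Q_8\} \cong \Q_8 \times \Q_8$; the subgroup $P_0 \cap K = \langle (-I,-I,I)\rangle$ corresponds to the central $\langle (-I, I)\rangle$, so $P_0/(P_0 \cap K) \cong (\Q_8 \times \Q_8)/\langle (-I, I)\rangle \cong 2^2 \times \Q_8$, and a cardinality check shows $P_0 K$ is the full preimage of $C_R(\pi)$. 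For the quotient $C_X(\pi)/C_R(\pi)\langle \pi\rangle$, I project $C_X(\pi)$ into $X/R \cong \bar X/\Q_8^3$ with kernel $C_R(\pi)$; using $\GSp(W)/\Q_8 \cong \Sym(3)$ and $\Sp(W)/\Q_8 \cong 3$, the image decomposes as $\Sym(3) \times 3 \times \langle \pi \rangle$ (the $\Sym(3)$ placed diagonally in the first two factors), and quotienting by $\langle \pi\rangle$ yields $\Sym(3) \times 3$.

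For part (iii), a direct commutator computation shows $[r, \pi] = (x^{-1}y, y^{-1}x, I) = (u, u^{-1}, I)$ with $u = x^{-1}y \in \Q_8$ arbitrary, as $r = (x, y, z)$ ranges over $\Q_8^3$. Hence $[R, \pi]$ is the image in $X$ of $H := \langle (b, b^{-1}, I) : b \in \Q_8\rangle$. Since $b^{-1} = \pm b$ in $\Q_8$, we have $H \subseteq \{(a, \pm a, I) : a \in \Q_8\}$, and products such as $(i,-i,I)(j,-j,I) = (k,k,I)$ and $(i,i,I)(i,-i,I) = (-I,I,I)$ give the reverse inclusion; so $|H| = 16$ and $H$ contains $\langle (I,-I,I), (-I,I,I)\rangle$ as claimed. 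Computing $[H, H] = \langle (-I,-I,I)\rangle = H \cap K$ identifies the image of $H$ in $X$ with the abelianization $H/[H,H]$, which is elementary abelian of order $2^3$. Finally, for a typical commutator $s = (u, u^{-1}, I) \in H$, $[s, \pi] = (u^{-2}, u^2, I) \in K$ because $u^2 \in Z(\Q_8) = \{\pm I\}$; this proves $\pi$ centralizes $[R, \pi]$, i.e., $[R, \pi] \leq C_R(\pi)$.

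The main obstacle throughout is the careful bookkeeping for the central kernel $K$: elements of the form $(a, -a, c)$ that do not commute with $\pi$ in $\bar X$ nevertheless centralize $\pi$ modulo $K$ and contribute essentially to $C_X(\pi)$; and the precise identifications forced by $K$ -- in particular that $P_0 \cap K$ and $H \cap K$ each have order $2$ -- are what pin down the isomorphism types $2^2 \times \Q_8$ and elementary abelian $2^3$.
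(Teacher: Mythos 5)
Your proposal is correct and follows essentially the same route as the paper: a direct commutator computation in the wreath product $\ov X$, reducing modulo the kernel $K$ to identify $C_X(\pi)$, $C_R(\pi)$ and $[R,\pi]$. The only (harmless) cosmetic differences are that you pin down $C_R(\pi)\cong 2^2\times \Q_8$ via the order-$64$ subgroup $P_0$ rather than the full order-$128$ preimage, and you read off the quotient $\Sym(3)\times 3$ from the projection to $X/R$ instead of exhibiting the explicit element $(m,-m,I)$ as the paper does.
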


\begin{proof} For part (i) we may focus on the base of  group $\bar X$. We calculate $(x,y,z)\in \bar X $, $x,y,z \in \GSp(W)$ is centralized by $\pi$ mod $K$ if and only if $xy^{-1} = \pm I$. The requirement that the element is in $\bar X$ rather that $\bar X^*$ means that $z \in \Sp(W)$.

For part (ii) we already see that  $|C_{X}(\pi)|= 2^7\cdot 3^2$ from (i). Also $C_R(\pi)$ is the image of
$$ \langle (\pm b,b,I), (I,I,c) \mid b,c
\in O_2(\GSp(W))\rangle \cong 2\times \Q_8 \times \Q_8$$  in $X$ and so $C_R(\pi)  \cong \Q_8
\times 2^2.$  Finally, letting $m$ be the element of $\GSp(W)$ that maps $e$ to $-e$ and fixes $f$, we have $ (m,-m, I)$ projects to an element of $C_X(\pi)$, centralizes $(I,I,d)$ and inverts $(d,d,I)$. Therefore $C_X(\pi)/C_R(\pi)\langle \pi \rangle \cong \Sym(3)\times 3$ and
this finishes the proof of  (ii).

For part (iii), we have $[O_2(\ov X),\pi] = \langle (b,b^{-1},I) \mid b \in O_2(\GSp(W))\rangle$ and, calculating $(c,c^{-1},I)(d,d^{-1},I)((cd)^{-1},cd,I)$ for non-commuting $c,d \in O_2(\GSp(W))$, shows that $(I,-I,I) \in [O_2(\ov X),\pi]$. Hence $(-I,I,I) \in [O_2(\ov X),\pi]$ as well.  Since, for elements of order $4$ in $O_2(\GSp(W))$, $b^{-1}= -b$, it is clear that  $[R,\pi ]\le C_R(\pi)$ and $[R,\pi]$ is elementary abelian of order $2^3$.
\end{proof}

Let $X^*$ be the image of $\ov X^*$ in $\GSp_8(3)$.

\begin{lemma}\label{out} Every coset of $O^2(C_{X^*}(\pi))\langle \pi \rangle$ in $C_{X^*}(\pi)$ contains an involution.
\end{lemma}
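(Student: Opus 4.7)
The plan is to exhibit $C_{X^*}(\pi)$ concretely in the wreath-product model, identify the $2$-group quotient $C_{X^*}(\pi)/Y$ (where $Y := O^2(C_{X^*}(\pi))\langle\pi\rangle$) as an elementary abelian $2$-group of order $4$, and then display an involution in each coset.

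First, I would show $[C_{X^*}(\pi):C_X(\pi)] = 2$. From Lemma~\ref{c2}(ii) and $[X^*:X] = 2$, this index is at most $2$. Taking $e' := \mathrm{diag}(-1,1) \in \GSp(W)\setminus\Sp(W)$ (an involution) and letting $e$ be the image in $X^*$ of $(I,I,e')$, the element $e$ is an involution in $X^*\setminus X$ that centralizes $\pi$, so $C_{X^*}(\pi) = C_X(\pi)\langle e\rangle$. Extending the description in Lemma~\ref{c2}(ii) of $C_X(\pi)/C_R(\pi)\langle\pi\rangle \cong \Sym(3)\times C_3$ generated by the images of $a := (d,d,I)$, $t := (m,-m,I)$, and $b := (I,I,d)$, I would observe that $e$ commutes with $a$ and $t$ (disjoint tensor supports) and inverts $b$ (from the matrix calculation $e'd(e')^{-1} = d^{-1}$ in $\GL_2(3)$). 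This yields $C_{X^*}(\pi)/C_R(\pi)\langle\pi\rangle \cong \Sym(3)\times\Sym(3)$.

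The crux is to prove $C_R(\pi) \subseteq O^2(C_{X^*}(\pi))$, which reduces to verifying $[C_R(\pi), \langle a,b\rangle] = C_R(\pi)$. In the wreath model, $C_R(\pi)$ decomposes as the product of a ``diagonal'' $2^2$ (image of $\{(c,c,I) : c \in \Q_8\}$ in $X$) and a ``third-factor'' $\Q_8$ (image of $\{(I,I,c) : c \in \Q_8\}$). Since $d \in \SL_2(3)$ satisfies $[\Q_8, d] = \Q_8$, commutation with $a$ produces the whole diagonal $2^2$ and commutation with $b$ produces the whole third-factor $\Q_8$; together these exhaust $C_R(\pi)$. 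Hence $Y$ projects onto $\Alt(3)\times\Alt(3)$ in the preceding quotient, so $C_{X^*}(\pi)/Y$ has order $4$.

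Finally, the four cosets of $Y$ are represented by the involutions $\pi$, $t$, $e$ and $te$: each squares to the identity since $\pi^2 = t^2 = e^2 = I$ and $t$, $e$ act in disjoint tensor factors and hence commute. Thus every coset of $Y$ in $C_{X^*}(\pi)$ contains an involution. The main obstacle is the commutator computation $[C_R(\pi), \langle a,b\rangle] = C_R(\pi)$, which rests on the well-known fact $[\Q_8, d] = \Q_8$ for an order-$3$ element $d$ acting nontrivially on $\Q_8 \trianglelefteq \SL_2(3)$.
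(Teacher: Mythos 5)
Your proof is correct and takes essentially the same route as the paper, which simply exhibits the image of $\langle (m,m,I),(I,I,m)\rangle$ in $X^*$ as a Klein four complement to $O^2(C_{X^*}(\pi))\langle\pi\rangle$; your representatives $t=(m,-m,I)$ and $e=(I,I,m)$ are the same involutions up to an element of $C_R(\pi)$. The paper asserts the complement (equivalently, index-four) property without detail, whereas you verify it via the commutator computation $[C_R(\pi),\langle a,b\rangle]=C_R(\pi)$ — a worthwhile elaboration, but not a different argument.
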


\begin{proof} Let $m$ be the element of $\GSp(W)$ which negates $e$ and fixes $f$. Then the image of the subgroup
$ \langle (m,m,I), (I,I,m)\rangle$  of $\ov X^*$  has order $4$, centralizes $\pi$ and complements $O^2(C_{X^*}(\pi))\langle \pi \rangle$. This
proves the result.
\end{proof}

By \cite[Proposition 4.6.9]{KL}, in $\Sp_8(3)=\Sp(V)$, the normalizer $L$ of $R$ has shape
$2^{1+6}_-.\Omega_6^-(2)$  and is uniquely determined up to conjugacy. Since $X$ normalizes $R$,  $X \le L$.
As $\Omega_6^-(2) \cong \PSp_4(3)$ has Sylow $3$-subgroups of order $3^4$, we see that $\pi$ is contained in both the subgroup
$X$ defined above and the companion ($3$-local) parabolic subgroup $P \ge RD\langle \tau,\pi\rangle$ with $P/R
\cong 3^{1+2}_+.\SL_2(3)$. Furthermore, in $P$, $\pi$ projects to the central element of $P/O_{2,3}(P) \cong
\SL_2(3)$.  Let  $Q $ be extraspecial of order $3^9$ and exponent $3$ and define $U$ and $M$ to be the
semidirect products \begin{eqnarray*} U&=&QL \approx 3^{1+8}_+{:}2^{1+6}_-.\Omega_6^-(2); \text { and}\\ M&=& QP
\approx 3^{1+8}_+{:}2^{1+6}_-.3^{1+2}_+.\SL_2(3).\end{eqnarray*}
We remark  that, as $\Aut(3^{1+8}_+)$ contains a subgroup isomorphic to $\GSp_8(3)$, these semidirect products exist. A similar construction does not work for extraspecial $2$-groups as  in general their automorphism groups do not contain the corresponding orthogonal groups. In fact $U$ is isomorphic to the centralizer of a $3$-central element in $\F_2$.

\section{The structure of  $3$-centralizers of type $\F_2$ and $\M(23)$}

Suppose that $G$ is a group, $S \in \syl_3(G)$, $Z= Z(S)$,  $H= C_G(Z)$ is similar to a $3$-centralizer in either
$\M(23)$ or $\F_2$. Hence $H$ is  a group with the same shape as $U$ or $M$ from  Section~3.
Set $Q= F^*(H)=O_3(H)\cong 3^{1+8}_+$. Since    $O_{3,2}(H)/Q \cong 2^{1+6}_-$, we see that $H/Z$ is a semidirect
product and is uniquely determined as a subgroup of the normalizer in $\Sp_8(3)$ of a subgroup isomorphic to
$2^{1+6}_-$.

Furthermore, if $H$ is similar to a $3$-centralizer in $\F_2$, then $H$ is a semidirect product and hence is
uniquely determined. In particular, if $H$ is similar to a $3$-centralizer in $\F_2$, then we identify $H$
with the group $U$ and if $H$ is similar to a $3$-centralizer in $\M(23)$, then we identify $H/Z$ with the group
$M/Z$. Note that in this latter case, the only difference between $M$ and $H$ is the elements from $M\setminus
M'$ which have order $3$ may correspond to elements which cube to $3$-central elements in $H$. In particular, in
this case we have $H' \cong M'$.  We also adopt all the notation established in Section 3 for elements of $H$
acting on $Q/Z$. When we want to distinguish between $U$, $M$ and $H$ we shall do so in the statements of the lemmas.

We let $F \le H$ be such that $F \cap Q = Z$ and $H=QF$. Thus $F/Z$ is a complement to $Q/Z$ in $H/Z$ and $F/Z \cong L$ if $H$ is similar to a $3$-centralizer of type $\F_2$ and $F/Z \cong P$ if  $H$ is similar to a $3$-centralizer of type $\M(23)$.  We now identify elements of $L$ and $P$ with elements of $F/Z$ and select preimages in $F$ for these elements of minimal order. Thus $F$ contain elements  which we denote by $ d_1, d_2, d_3, (1,2,3)$ for example and modulo $Z$ they behave just as the corresponding elements of $L$ or $P$ and furthermore they act on $Q/Z$ in exactly the way described. Notice that we know that $d_2, d_3$ and $(1,2,3)$ can be chosen of order $3$ and $d_1$ may have order $9$    if  $H$ is similar to a $3$-centralizer of type $\M(23)$.

We  define $$D = \langle d_1,d_2,d_3\rangle$$ and $$T = \langle d_2,d_3,(1,2,3)\rangle.$$  We may assume that $T$ is extraspecial of order $27$ and that $D$ has order $3^3$   if  $H$ is similar to a $3$-centralizer of type $\F_2$. We also assume that $ S \cap F = DT \in \syl_3(F)$.
We also let $\pi \in F$ correspond to  the involution as defined in Section 3.

The following lemma lists basic properties of $H$
and  interprets the information about the action of $L$ on the symplectic space $V$ collected in Lemma~\ref{c1}
in the situation when $L$ operates on an extraspecial group. The relationship between symplectic spaces and extraspecial groups  is well known and can be found in
Huppert \cite[III(13.7)]{Huppert}.

\begin{lemma}\label{basics} We have \begin{enumerate}\item  $N_G(S) \le H$ and  $S\in \syl_3(G)$.\item The $3$-rank of $H/Q$ is
$3$.
\item If $xQ$ is an element of order $3$ and $H$ is a $3$-centralizer in $\F_2$, then $xQ$ is conjugate to one of
 $d_1Q$, $d_2Q$ or $d_3Q$ in $H/Q$.
\item If $xQ$ is conjugate to $d_1Q$, then $C_{Q/Z}(x)$ has order $3^4$, its preimage is elementary abelian of order $3^5$ and
 $C_{Q/Z}(x) = [Q,x]/Z$.
\item If $xQ$ is conjugate to $d_2Q$, then $C_{Q/Z}(x)$ has order $3^4$ and its preimage is non-abelian of order $3^5$  with centre of order $3^3$.
\item If $xQ$ is conjugate to $d_3Q$, then $C_{Q/Z}(x) \le [Q,x]/Z$ and $C_{Q/Z}(x)$ has order $3^3$ and its preimage is elementary abelian of order $3^4$.
\item  $C_{Q/Z}(DQ/Q)$ has order $3$ and is centralized by $\pi$ and $|C_{Q/Z}(TQ/Q)| = 3^2$.
\item If $A$ is a subgroup of $S/Q$ of exponent $3$ and order $27$, then either $A= DQ/Q$ or $A= TQ/Q$.
\item $C_Q(\pi)$ is extraspecial of order $3^7$.
\item Let $\langle \sigma \rangle = (O_{3,2}(H)/Q)^\prime$ and $\wt \pi$ be an involution in $(O_{3,2}(H)/Q)\pi$,
then $\wt \pi$ is $H$-conjugate to either $\pi$ or $\sigma \pi$ in $O_{3,2}(H)\langle \pi\rangle$.
\end{enumerate}
\end{lemma}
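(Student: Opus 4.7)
The plan is to leverage the identifications $H/Q \cong L$ (for $\F_2$-type) and $H/Q \cong P$ (for $\M(23)$-type) established before the lemma, so that $Q/Z$ becomes the symplectic module $V = W\otimes W\otimes W$ from Section~3 and the commutator pairing on $Q/Z$ is the symplectic form. Then every structural assertion in Lemma~\ref{c1} and Lemma~\ref{c2} transfers, via the standard extraspecial/symplectic dictionary \cite[III(13.7)]{Huppert}, to the corresponding preimage in $Q$; parts (iii)--(ix) then essentially reduce to reading off the right clauses of Lemma~\ref{c1} while tracking abelian versus non-abelian preimages through (an)isotropicity.

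For (i), I would first establish $Z(S)=Z$: since $C_H(Q)=Z$ (as $Q = F^*(H)$), $Z(S)\le Q$, and $Z(S)\cap Q = C_Q(S/Q) = Z$ because $S/Q$ contains the image of $D$ and $C_V(D)$ is one-dimensional by Lemma~\ref{c1}(vii). Hence $Z$ is characteristic in $S$ and $N_G(S)\le N_G(Z)$. The inclusion $N_G(S)\le H$ is the main obstacle: it is equivalent to showing that no element of $N_G(S)$ inverts $Z$, which I would verify by observing that such an element would normalize $Q$ (the unique extraspecial subgroup of $S$ of order $3^9$) and induce on $Q/Z$ an element of $\GSp_8(3)\setminus \Sp_8(3)$ normalizing the image of $S/Q$; the explicit description in Section~3 of the Sylow $3$-subgroup of $L/R$ (resp. $P/R$) shows no such outer inverter exists. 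For (ii), $DQ/Q\cong D$ is elementary abelian of rank $3$, while the $3$-rank of $H/Q$ is bounded above by the common $3$-rank $3$ of $\Omega_6^-(2)$ and of the $\M(23)$-local $3^{1+2}_+.\Q_8.3$.

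For (iii), every $3$-element of $H/Q$ is conjugate into a fixed Schur--Zassenhaus complement $\cong \PSp_4(3)$, and the three classes of order-$3$ elements in $\PSp_4(3)$ are represented by $d_1,d_2,d_3$. Parts (iv)--(vi) then follow directly from Lemma~\ref{c1}(i)--(iii): the preimage in $Q$ of $C_{Q/Z}(x)$ has order $3\cdot 3^{\dim C_V(x)}$, is abelian iff $C_V(x)$ is totally isotropic, and its centre corresponds to the radical of the form restricted to $C_V(x)$. Part (vii) is immediate from Lemma~\ref{c1}(vii),(viii), noting $C_V(D)=\langle e\otimes e\otimes e\rangle\subseteq C_V(\pi)$, while $C_V(T)=C_V(\langle d_2,d_3\rangle)\cap C_V(\tau)$ has dimension $2$ after a short $\tau$-invariance check on the two basis vectors of Lemma~\ref{c1}(vi). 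For (viii), a short enumeration of subgroups of order $27$ and exponent $3$ in the Sylow $3$-subgroup $S/Q$ of order $81$, using the rank bound from (ii) and the cube map on $S/Q$, leaves only the elementary abelian $D$ and the extraspecial $T\cong 3^{1+2}_+$. For (ix), Lemma~\ref{c1}(viii) gives $|C_Q(\pi)|=3^7$, and a direct Gram-matrix computation on the six explicit basis vectors of $C_V(\pi)$ listed there shows the restricted form is non-degenerate, so $Z(C_Q(\pi))=Z$ and $C_Q(\pi)$ is extraspecial.

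For (x), set $R=O_{3,2}(H)/Q\cong 2^{1+6}_-$ with central involution $\sigma$. By Lemma~\ref{c2}, $C_R(\pi)\cong 2^2\times \Q_8$ has order $2^5$ and contains $[R,\pi]$ elementary abelian of order $2^3$. An element $r\pi\in R\pi$ is an involution precisely when $r^\pi = r^{-1}$. I would enumerate $R$-orbits on these involutions via their images in $R/[R,\pi]$ (giving at most $|C_R(\pi)/[R,\pi]|=4$ orbits), and then use the action of $C_H(\pi)$, whose image modulo $C_R(\pi)\langle\pi\rangle$ is $\Sym(3)\times 3$ by Lemma~\ref{c2}(ii), to collapse these to the two asserted $H$-classes represented by $\pi$ and $\sigma\pi$.
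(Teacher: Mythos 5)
Your overall route is exactly the paper's: the printed proof consists of three sentences declaring (i) and (ii) ``clear'' and referring (iii)--(x) to Lemmas~\ref{c1} and \ref{c2} via the identification of $H/Z$ with $U/Z$ or $M/Z$ and the extraspecial/symplectic dictionary of \cite[III(13.7)]{Huppert}. Your translations for (iv)--(ix) are correct (for (ix) you can avoid the Gram matrix: coprimeness of $|\pi|$ and $|Q|$ gives $Q/Z = C_{Q/Z}(\pi)\perp [Q,\pi]Z/Z$ directly, so the form on $C_{Q/Z}(\pi)$ is automatically nondegenerate), and (vii), (viii) are fine.

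There are, however, two places where your argument would fail. First, in (i) you correctly isolate that $N_G(S)\le H$ amounts to showing no element of $N_G(S)$ inverts $Z$, but your proposed verification (``no such outer inverter exists'') is wrong: since $S\in\Syl_3(C_G(Z))$ and $C_G(Z)\trianglelefteq N_G(Z)$, the Frattini argument gives $N_G(Z)=C_G(Z)\,N_{N_G(Z)}(S)$, so whenever $Z$ is inverted in $G$ --- as it is in $\M(23)$ and $\F_2$ --- there is an element of $N_G(S)$ outside $H$. So this clause cannot be proved; what is true, and all that is used later, is $Z(S)=Z$ (your argument for this is fine), whence $S\in\Syl_3(G)$ and $N_G(S)\le N_G(Z)$ with $O^2(N_G(S))\le H$. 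Second, in (x) your accounting does not close: the involutions in $R\pi$ are exactly the elements $s\pi$ with $s\in\Omega_1(C_R(\pi))=[R,\pi]$ (one must check there is no $s$ of order $4$ with $[s,\pi]=\sigma$), so they all lie in the single coset $[R,\pi]\pi$, and the point is that this $8$-element set is precisely $\pi^R\cup(\sigma\pi)^R$; your ``at most four orbits, then collapse under $C_H(\pi)$'' cannot work as stated because $\pi$ and $\sigma\pi$ have different centralizers in $Q$ and so can never be fused --- you must show there are only two $R$-orbits to begin with. Two smaller points on (iii): $L$ need not split over $R$ and Schur--Zassenhaus does not apply (the orders are not coprime); the correct reduction is that any two elements of order $3$ in the same coset of $R$ are $R$-conjugate, so classes in $L$ biject with classes in $L/R\cong\Omega_6^-(2)$ --- and that group has four classes of elements of order $3$ but three classes of subgroups, so (iii) should be read (as it is used throughout the paper) as a statement about the subgroups $\langle d_iQ\rangle$ rather than about element conjugacy.
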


\begin{proof} Parts (i) and (ii) are clear. Parts (iii) to (ix) correspond to the  statements in Lemma \ref{c1}.
The assertion in (x) is Lemma \ref{c2}.
\end{proof}

\begin{lemma}\label{el9} Every elementary abelian subgroup of order $9$ in $H/Q$ is $H/Q$-conjugate  to a subgroup of $DQ/Q$.
\end{lemma}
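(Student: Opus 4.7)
My plan is to combine a Sylow reduction with the structural analysis of $S/Q$ developed in Lemma~\ref{basics}. First, by Sylow's theorem any elementary abelian subgroup $E$ of order~$9$ in $H/Q$ is $H/Q$-conjugate to one lying inside $S/Q$, so after replacing $E$ by an appropriate conjugate we may assume $E\le S/Q$. The key dichotomy is then the following: either $E\le DQ/Q$ (and there is nothing more to prove) or $Z(S/Q) = \langle d_3Q\rangle \le E$. To see this, suppose $E\cap Z(S/Q)=1$; then $E\cdot Z(S/Q)$ has order~$27$, is abelian (since $Z(S/Q)$ is central in $S/Q$), and has exponent~$3$ (both factors do), so by Lemma~\ref{basics}(viii) it equals either $DQ/Q$ or $TQ/Q$. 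The latter is a non-abelian extraspecial group of order~$27$, so we must have $E\cdot Z(S/Q)=DQ/Q$, whence $E\le DQ/Q$.

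It remains to consider $E$ with $d_3Q\in E$ and $E\not\le DQ/Q$. Writing $E=\langle d_3Q,yQ\rangle$ with $yQ\notin DQ/Q$ of order~$3$, the decomposition $S/Q=DQ/Q\cdot\langle \tau Q\rangle$ gives $y\equiv x\tau^i\pmod Q$ with $x\in D$ and $i\in\{1,2\}$. Since $\tau$ acts on $D$ as a single Jordan block of size~$3$ with kernel $\langle d_3\rangle$ and image $[D,\tau]$, the condition $(x\tau^i)^3\in Q$ forces $xQ\in [D,\tau]Q/Q$. Enumerating the cosets of $\langle d_3\rangle$ in $[D,\tau]$ produces exactly three such subgroups $E$, and a direct computation of commutators shows that these three are transitively permuted under conjugation by $DQ/Q$ itself. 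It is therefore enough to conjugate a single such $E$ into $DQ/Q$. In the $\F_2$-case, I would invoke Lemma~\ref{basics}(iii): the element $\tau Q$ has the same Jordan type on $V=Q/Z$ as $d_2Q$ (both have $4$-dimensional fixed space and act non-quadratically, with Jordan type $3+3+1+1$), so Lemma~\ref{basics}(iii) supplies $\alpha\in H/Q$ with $(\tau Q)^\alpha=d_2Q$. Then $E^\alpha=\langle(d_3Q)^\alpha,d_2Q\rangle$, and the claim reduces to showing that the $3$-central element $(d_3Q)^\alpha$ can be conjugated into $DQ/Q$ by some $\beta\in C_{H/Q}(d_2Q)$. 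The $\M(23)$-case is handled by transferring the argument into $H'\le H$, whose structure modulo~$Z$ matches that of the analogous subgroup of the $\F_2$ group $U$ constructed in Section~$3$.

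The main obstacle will be this final fusion step inside $C_{H/Q}(d_2Q)$: Lemma~\ref{basics}(iii) fuses $3$-central elements in $H/Q$ as a whole, but not a priori inside the local centralizer. To bridge this gap, I plan to use the explicit symplectic description of the action of $H/Q$ on $V$ from Section~$3$ to pin down a Sylow $3$-subgroup of $C_{H/Q}(d_2Q)$, locate its $3$-central elements, and verify their $C_{H/Q}(d_2Q)$-fusion directly. Concretely, the verification reduces to showing that the orbit of $(d_3Q)^\alpha$ under $DQ/Q\cap C_{H/Q}(d_2Q)$ meets $DQ/Q$, which ultimately reflects the geometric fact (implicit in Lemma~\ref{c1}) that the stabilizer of the decomposition $V=C_V(d_2)\oplus [V,d_2]$ inside $C_{H/Q}(d_2Q)$ acts transitively on suitable $3$-central generators.
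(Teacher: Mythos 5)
Your reduction is correct as far as it goes: the dichotomy via Lemma~\ref{basics}(viii) (either $E\le DQ/Q$ or $Z(S/Q)=\langle d_3Q\rangle\le E$), the identification of exactly three candidate subgroups $E\not\le DQ/Q$ (these are precisely the elementary abelian subgroups of order $9$ of the extraspecial group $TQ/Q$ other than $\langle d_2,d_3\rangle Q/Q$), and the observation that $DQ/Q$ permutes these three transitively are all fine. The gap is that the one step carrying the real content --- conjugating a single such $E$ into $DQ/Q$ --- is never actually carried out. You reduce it to showing that $(d_3Q)^\alpha$ can be moved into $DQ/Q$ by an element of $C_{H/Q}(d_2Q)$, you concede that Lemma~\ref{basics}(iii) gives fusion in $H/Q$ but not inside this centralizer, and you then offer only a ``plan'' resting on an unproved transitivity claim for a stabilizer inside $C_{H/Q}(d_2Q)$. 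Nothing in Lemma~\ref{c1} or Lemma~\ref{basics} delivers that claim, so the argument is incomplete exactly where it needs to be complete. A secondary problem is the $\M(23)$ case: Lemma~\ref{basics}(iii) is stated only when $H$ is of type $\F_2$, and the element $\alpha$ with $(\tau Q)^\alpha=d_2Q$ is produced by conjugation in $L$, which need not restrict to $P$; ``transferring the argument into $H'$'' is asserted, not done.

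The missing step has a clean, uniform resolution, and it is the one the paper uses. All four elementary abelian subgroups of order $9$ of $TQ/Q$ contain $\langle d_3Q\rangle$ and correspond bijectively to the four subgroups of order $3$ of $(TQ/Q)/\langle d_3Q\rangle\cong 3^2$. By the construction in Section~3, $N_{H/RQ}(TRQ/RQ)\cong 3^{1+2}_+{:}\SL_2(3)$ in both the $\F_2$ and the $\M(23)$ cases, and the $\SL_2(3)$ quotient acts naturally, hence transitively, on those four subgroups of order $3$. Therefore each of your three subgroups is conjugate to $\langle d_2,d_3\rangle Q/Q=(D\cap T)Q/Q\le DQ/Q$, with no case distinction and no fusion analysis inside $C_{H/Q}(d_2Q)$. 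This single ingredient is what your write-up is missing; with it, your first paragraph plus this observation gives a complete proof (and in fact your explicit enumeration of the three subgroups is then redundant, since transitivity on all four at once already suffices).
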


\begin{proof} The Sylow 3-subgroup of $H/Q$ is isomorphic to the wreath product $3 \wr 3$. Hence all the cyclic subgroups of order 3
in $S/Q $ not contained in $DQ/Q$ are conjugate under the action of $DQ/Q$. Let $E$ be an elementary abelian
group of order 9 in $S/Q$ with $E \not\leq DQ/Q$. Then $|C_{DQ/Q}(E)| = 3$ and so up to conjugacy $E$ is
uniquely determined and is contained in $ TQ/Q$.  As $N_{H/RQ}(TRQ/RQ) \cong 3^{1+2}{:}\SL_2(3)$ acts transitively on elementary abelian subgroups of order
$9$ in $TRQ/RQ$, we see that $E$ is conjugate into $DQ/Q$.
\end{proof}

\begin{lemma}\label{cent9} Let $E$ be an elementary abelian subgroup in $H/Q$ of order 9. Then $|C_{Q/Z}(E)| \leq 9$.
\end{lemma}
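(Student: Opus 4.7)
By Lemma~\ref{el9}, after conjugation in $H/Q$ I may assume $E \leq DQ/Q$. Identifying $Q/Z$ with the symplectic $\GF(3)$-module $V = W \otimes W \otimes W$ of Section~3 and $DQ/Q$ with the subgroup $D = \{(d^{a_1}, d^{a_2}, d^{a_3}) : (a_1, a_2, a_3) \in \GF(3)^3\}$ of $\Sp(V)$, the subgroup $E$ is the kernel of some nonzero linear form $\phi = (p, q, r)$ on $\GF(3)^3$. The target is $\dim_{\GF(3)} C_V(E) \leq 2$.

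The images of $\tau$ and $\pi$ from Section~3 in $H/Q$ generate a copy of $\Sym(3)$ which permutes the tensor factors of $V$ and correspondingly the coordinates of $D$, so every hyperplane $E$ is $H/Q$-conjugate to one defined by a $\phi$ in the list
\[
\{(1,0,0),\,(1,1,0),\,(1,2,0),\,(1,1,1),\,(1,1,2)\}.
\]
Three of these shapes contain hyperplanes already treated in Lemma~\ref{c1}: $\langle d_1, d_2\rangle$ has $\phi = (0,0,1)$ (conjugate to $(1,0,0)$), $\langle d_1, d_3\rangle$ has $\phi = (0,1,-1)$ (conjugate to $(1,2,0)$), and $\langle d_2, d_3\rangle$ has $\phi = (1,1,1)$; in each case Lemma~\ref{c1}(iv)--(vi) yields $\dim C_V(E) = 2$.

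For the remaining two shapes I would work in the coordinate description $V \cong \GF(3)[t_1, t_2, t_3]/(t_i^2)$, in which $(d^{a_1}, d^{a_2}, d^{a_3})$ acts by multiplication by $(1 + a_1 t_1)(1 + a_2 t_2)(1 + a_3 t_3)$. For $\phi = (1,1,0)$, taking $E = \langle d_2, d_1 d_2 d_3\rangle$ (the second generator being $(I,I,d)$), the generators minus $1$ become $t_1 - t_2 - t_1 t_2$ and $t_3$; intersecting their kernels yields the $2$-dimensional subspace $\langle t_1 t_3 + t_2 t_3,\, t_1 t_2 t_3 \rangle$. For $\phi = (1,1,2)$, taking $E = \langle d_2, d_1 d_2^{-1} d_3^{-1}\rangle$ (the second generator being $(d^2,I,d^{-1})$), the second generator minus $1$ becomes $-t_1 - t_3 + t_1 t_3$, and a parallel short calculation again produces a $2$-dimensional common kernel. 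Together with the three Lemma~\ref{c1} cases this establishes $\dim C_V(E) = 2$, hence $|C_{Q/Z}(E)| = 9 \leq 9$.

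The main obstacle is the case-by-case linear-algebra bookkeeping: correctly identifying a small set of orbit representatives, translating each chosen generator into its expression in the nilpotent ring, and intersecting kernels without arithmetic error. A cleaner but less elementary alternative would be to use the fact that $N_{H/Q}(DQ/Q)/DQ/Q$ acts on $D$ as the Levi complement $\PGL_2(3) \cong \Sym(4)$ in the $3$-local parabolic of $L/R \cong \PSp_4(3)$, collapsing the five $\Sym(3)$-orbits above into just three orbits (corresponding to rank~$1$, rank~$2$ split and rank~$2$ anisotropic quadratic forms on a $2$-space), and thereby reducing the explicit computation to a single unavoidable case, e.g.\ $\phi = (1,0,1)$.
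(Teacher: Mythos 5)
Your proposal is correct, but it takes a genuinely different route from the paper's. The paper disposes of the lemma in three lines by contradiction: if $|C_{Q/Z}(E)|\geq 27$, then since $DQ/Q$ is generated by elements acting quadratically on $Q/Z$ (the three coordinate copies of $d$, all of type $d_1Q$), the hyperplane $E$ must omit one such generator $t$, so $DQ/Q=\langle E,t\rangle Q/Q$; quadratic action of $t$ on the $(\geq 3)$-dimensional space $C_{Q/Z}(E)$ gives $\dim C_{C_{Q/Z}(E)}(t)\geq \tfrac{1}{2}\dim C_{Q/Z}(E)\geq 2$, whence $|C_{Q/Z}(D)|\geq 9$, contradicting Lemma~\ref{basics}(vii). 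You instead enumerate all thirteen hyperplanes of $DQ/Q$ up to the coordinate-permuting $\Sym(3)$ and compute each centralizer explicitly. Your orbit representatives are complete, the identification of three of them with Lemma~\ref{c1}(iv)--(vi) is accurate, and your two remaining computations in $\GF(3)[t_1,t_2,t_3]/(t_i^2)$ check out: for $\phi=(1,1,0)$ the common kernel is the one you state, and for $\phi=(1,1,2)$ it is $\langle t_1t_2-t_1t_3-t_2t_3,\; t_1t_2t_3\rangle$, again $2$-dimensional. What the paper's argument buys is brevity and complete freedom from case analysis (the quadraticity of the generators does exactly the case-collapsing you were hoping to achieve via the $\Sym(4)$ acting on $DQ/Q$); what yours buys is the sharper conclusion that $\dim C_{Q/Z}(E)=2$ for \emph{every} such $E$, rather than merely the upper bound. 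Both proofs rest on Lemma~\ref{el9} for the same initial reduction to $E\leq DQ/Q$.
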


\begin{proof} By Lemma \ref{el9} we may assume that $E \leq DQ/Q$.  Now suppose $|C_{Q/Z}(E)| \geq 27$.  As $D$ is generated by elements which act quadratically on $Q/Z$, we see that
$DQ/Q = \langle E, t \rangle Q/Q$, where $t$ acts quadratically on $Q/Z$.  Therefore
$$|C_{Q/Z}(D)|=|C_{Q/Z}(E) \cap C_{Q/Z}(t)| \geq 9,$$ which contradicts Lemma \ref{basics}(vii).
\end{proof}

\begin{lemma}\label{invscz3}  $C_{H/Q}(d_3Q)$ has exactly three conjugacy classes of
involutions with representatives $\pi Q, \pi \sigma Q$ and $\sigma Q$ where $\sigma Q$
is the $2$-central element of $H/Q$. We have $|C_Q(\pi)|=3^7$, $|C_Q(\pi \sigma) |= 3^3$ and $|C_Q(\sigma)|=3$.
\end{lemma}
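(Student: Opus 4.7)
The plan is to analyse $C_{H/Q}(d_3Q)$ via its image modulo $R=O_2(H/Q)$, then to determine the three centralizer orders using coprime action on $V=Q/Z$.

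First, I identify the structure modulo $R$. In the $\F_2$ case, $(H/Q)/R\cong\Omega_6^-(2)\cong\PSp_4(3)$ and the image of $d_3Q$ is $3$-central, so its centralizer there is isomorphic to $3^{1+2}_+{:}\SL_2(3)$. In the $\M(23)$ case, $(H/Q)/R\cong 3^{1+2}_+{:}\SL_2(3)$ itself and the image of $d_3Q$ lies in the centre of the $3^{1+2}_+$ subgroup; since $\SL_2(3)$ has no subgroup of index~$2$, the $\SL_2(3)$-complement also centralises this element, giving the same quotient centralizer $3^{1+2}_+{:}\SL_2(3)$. Either way this quotient centralizer has a unique conjugacy class of involutions, represented by the central involution of its $\SL_2(3)$, which by the construction of Section~3 is the image $\pi R$. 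Next I determine $C_R(d_3)$ directly from Section~3: the group $R$ is the image in $\Sp(V)$ of $\Q_8\times\Q_8\times\Q_8$ modulo the four-group $K$, and $d_3$ is induced by the diagonal element $(d,d,d)\in\GSp(W)^3$ where $d$ has order~$3$. Because an order-$3$ element of $\GL_2(3)$ centralises only $Z(\Q_8)$ in $\Q_8$, the preimage of $C_R(d_3)$ is $\langle -I\rangle^3$, and hence $C_R(d_3)=\langle\sigma\rangle=Z(R)$.

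Now let $\iota$ be any involution of $C_{H/Q}(d_3Q)$. If $\iota\in R$ then $\iota\in C_R(d_3)=\langle\sigma\rangle$, forcing $\iota=\sigma$. Otherwise $\iota R$ is a non-trivial involution of $(H/Q)/R$ contained in the quotient centralizer; by the uniqueness of the involution class in $3^{1+2}_+{:}\SL_2(3)$, $\iota R$ is conjugate to $\pi R$ by an element in the $3^{1+2}_+$ part. Because $d_3Q$ lies in the centre of a Sylow $3$-subgroup of $H/Q$, this Sylow $3$-subgroup is contained in $C_{H/Q}(d_3Q)$ and projects onto a Sylow $3$-subgroup of $(H/Q)/R$; consequently the required $3^{1+2}_+$-conjugation can be realised inside $C_{H/Q}(d_3Q)$ itself. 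After such a conjugation, $\iota\in\pi R\cap C_{H/Q}(d_3Q)=\pi\cdot C_R(d_3)=\{\pi,\pi\sigma\}$, and $\pi\sigma$ is indeed an involution since $\sigma$ is central in $H/Q$.

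Finally, I compute the three centralizer orders using coprime action on $V=Q/Z$: for any involution $x$ acting on $Q$, $|C_Q(x)|=|Z|\cdot|C_V(x)|=3\cdot|C_V(x)|$. Lemma~\ref{c1}(viii) gives $\dim C_V(\pi)=6$, so $|C_Q(\pi)|=3^7$. Since $\sigma$ acts as $-I$ on $V$, $C_V(\sigma)=0$ and $|C_Q(\sigma)|=3$. Since $\pi\sigma$ acts as $-\pi$ on $V$, $C_V(\pi\sigma)$ is the $(-1)$-eigenspace of $\pi$, of dimension $8-6=2$, giving $|C_Q(\pi\sigma)|=3^3$. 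These three orders are distinct, so $\sigma$, $\pi$, and $\pi\sigma$ are pairwise non-conjugate in $H/Q$, which completes the classification. The principal technical obstacle is the lifting step in the middle paragraph, which depends crucially on $d_3Q$ being $3$-central in $H/Q$ so that the $3^{1+2}_+$-conjugation in the quotient lifts inside $C_{H/Q}(d_3Q)$.
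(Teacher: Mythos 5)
Your proof is correct and follows essentially the same route as the paper: establish $C_R(d_3)=\langle\sigma\rangle$ and $C_{(H/Q)/R}(d_3R)\approx 3^{1+2}_+{:}\SL_2(3)$, then read off the three centralizer orders from the action on $Q/Z$ via Lemma~\ref{c1}, which also separates the classes. The only cosmetic difference is how the involution classes are bounded: the paper notes that a Sylow $2$-subgroup of $C_{H/Q}(d_3Q)$ has shape $2.\Q_8$ and hence carries at most three involutions, whereas you lift the unique involution class of $3^{1+2}_+{:}\SL_2(3)$ through $R$ by means of a Sylow $3$-subgroup.
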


\begin{proof} Suppose first that  $H= QL$.  Since $Q$ is a $3$-group,
it suffices to work in $L$. We have $C_R(d_3)=\langle\sigma \rangle$  as by definition $d_3$ acts fixed point
freely on $R/\langle \sigma \rangle$. Furthermore, $C_{L/R}(d_3) $ has shape $3^{1+2}_+.\SL_2(3)$, and thus the
Sylow $2$-subgroup of $C_L(d_3)$ has shape $2.\Q_8$. It follows that there are at most $3$-conjugacy classes of
involutions in $C_L(d_3)$. Since $\pi$, $\pi \sigma$ and $\sigma$ all centralize $d_3$, we have the statement in
this case.  In the situation that $H'\cong M'$, we have the result by intersection with $L$. Finally the orders
of the centralizers of these involutions in $Q$ follow  from Lemma~\ref{basics} (ix) as $\sigma$ inverts $Q/Z$.
\end{proof}

We say that an element $e$ of $H$ acts as an element of \emph{type} $d_3Q$ provided that, viewed as an
element of $U$, $eQ$ is $U$-conjugate to $d_3Q$. Notice that this means that if $H$ has  a $3$-centralizer of
type $\M(23)$ then $eQ$ and  $d_3Q$ might not be $H$-conjugate.

\begin{lemma}\label{piact} Suppose that $y \in H$ has type $d_3Q$ with $[y,\pi]=1$. Then the following
hold.
\begin{enumerate}
\item  $|[C_{Q/Z}(y),\pi]|=3$;
\item if $A$ is a subgroup of order 9 in $DQ/Q$ such that $A = \langle yQ, B \rangle$, where $B$ is inverted by $\pi$, then $[C_{Q/Z}(A),\pi]=1$; and
\item $[C_{Q/Z}(T),\pi] = 1$.
\end{enumerate}
\end{lemma}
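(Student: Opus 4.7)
The strategy is to reduce each statement to explicit linear algebra inside $V:=Q/Z$, which we identify with the tensor cube $W\otimes W\otimes W$ from Section~3, using the bases for various centralizers in Lemma~\ref{c1}. Since $[Q,Q]=Z$, the conjugation action of $Q$ on $V$ is trivial, so the relevant centralizers and commutator subspaces depend only on the images of $y$, $A$, $T$ and $\pi$ modulo $Q$.

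For part (i), $\langle y,\pi\rangle\cong C_6$ acts on the $\GF(3)$-vector space $V$, and since $\gcd(2,3)=1$, every indecomposable $\GF(3)[C_6]$-module has the form $M_k^{\varepsilon}$, where $M_k$ ($1\le k\le 3$) is the $k$-dimensional indecomposable $\GF(3)[\langle y\rangle]$-module (each with a $1$-dimensional fixed subspace) and $\varepsilon\in\{+,-\}$ records the $\pi$-eigenvalue. The constraints $\dim V=8$, $\dim C_V(y)=3$ (by Lemma~\ref{c1}(iii), as $yQ$ is of type $d_3Q$) and $\dim C_V(\pi)=6$ (by Lemma~\ref{c1}(viii)) force $V\cong M_2^-\oplus M_3^+\oplus M_3^+$ uniquely. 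Consequently $C_V(y)$ is the direct sum of three $1$-dimensional pieces on which $\pi$ acts as $-1,+1,+1$, so $[C_{Q/Z}(y),\pi]$ has dimension $1$ and order $3$.

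For part (ii), direct computation shows $\pi$ acts on $DQ/Q$ by $d_1\mapsto d_1d_2^{-1}$, $d_2\mapsto d_2^{-1}$, $d_3\mapsto d_3$, so the unique $(-1)$-eigenspace of $\pi$ in $DQ/Q$ is $\langle d_2Q\rangle$; hence $B=\langle d_2Q\rangle$. The $(+1)$-eigenspace $\langle d_1d_2Q,d_3Q\rangle$ contains exactly two subgroups of order $3$ generated by elements of type $d_3Q$, and these are conjugate in $C_{H/Q}(\pi)$ via $(m,m,I)\in C_{\ov X}(\pi)$, where $m\in\GSp(W)$ inverts $d$. So, up to conjugation preserving $\pi$, $A=\langle d_2,d_3\rangle Q/Q$; by Lemma~\ref{c1}(vi), $C_V(A)$ is spanned by $e\otimes e\otimes e$ and $f\otimes e\otimes e+e\otimes f\otimes e+e\otimes e\otimes f$, both invariant under the swap of the first two tensor factors, so $[C_V(A),\pi]=1$. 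For part (iii), the $3$-cycle $(1,2,3)\in T$ cyclically permutes tensor factors and fixes each of these two basis vectors, so $C_V(T)$ coincides with the same $2$-dimensional space, which has just been seen to be centralized by $\pi$.

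The chief technical subtlety is that $y$ in (i) and (ii) is only determined up to conjugacy within its type; this is handled in (i) by the module-theoretic argument, which depends only on the listed dimensions, and in (ii) by exhibiting the explicit conjugating element $(m,m,I)$ that centralizes $\pi$.
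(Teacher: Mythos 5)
Your proof is correct, and it reaches the conclusions by a genuinely different route in part (i) and a different reduction in part (ii). For (i) the paper first normalizes the pair: a conjugating element carrying $yQ$ to $d_3Q$ sends $\pi$ into $C_H(d_3Q)$, and Lemma~\ref{invscz3} together with the centralizer orders $|C_Q(\pi)|=3^7$, $|C_Q(\pi\sigma)|=3^3$, $|C_Q(\sigma)|=3$ forces that image to be $\pi$ again; the value $3$ is then read off from the explicit basis of $C_{Q/Z}(d_3)$ in Lemma~\ref{c1}(iii). Your $\GF(3)[C_6]$-decomposition avoids this fusion step altogether and shows that the answer is forced by the three dimensions $8$, $3$ and $6$ alone; this is cleaner and more robust, though the paper's normalization ($yQ=d_3Q$ with $\pi$ preserved) is precisely what it then reuses to launch part (ii). In (ii) the paper argues that $A$, not being centralized by $\pi$, must lie in $O_3(C_{H/Q}(d_3Q))=TQ/Q$, forcing $A=(D\cap T)Q/Q$, whereas you pin down $B=\langle d_2Q\rangle$ as the unique $(-1)$-eigenline of $\pi$ on $DQ/Q$ and then conjugate by $(m,m,I)$; both routes terminate at Lemma~\ref{c1}(vi), and (iii) is handled the same way in both treatments. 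The one assertion you should justify is that exactly two of the four order-$3$ subgroups of the $(+1)$-eigenspace $\langle d_1d_2,d_3\rangle Q/Q$ have type $d_3Q$: one must check that $(d,d,I)$ and $(I,I,d)$ are of types $d_2Q$ and $d_1Q$ respectively (so are excluded by the centralizer orders in Lemma~\ref{basics}(iv)--(vi)), while $(d,d,d^{-1})$ has a $3$-dimensional fixed space on $Q/Z$ and hence is of type $d_3Q$ by Lemma~\ref{basics}(iii). This is routine, but it is the step your ``direct computation'' leaves implicit, and without it the case analysis of possible $\langle yQ\rangle$ would be incomplete.
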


\begin{proof} It is enough to prove the statements for $H$ of type $\F_2$. Then, by definition, there is some $h \in H$
such that $(yQ)^h = d_3Q$. Furthermore $\pi^h \in C_H(d_3Q)$. By Lemma~\ref{invscz3} we may assume that
$\pi^h = \pi$ or $\pi \sigma$. By Lemma~\ref{basics}(ix)  $|C_Q(\pi \sigma)| = 3^3$, while
$|C_G(\pi^h)| = 3^7$. Hence we have $\pi^h = \pi$.

 Now (i) follows, as  $|[C_{Q/Z}(d_3),\pi]|=3$ by Lemma~\ref{c1}(iii).

We now may assume that $yQ = d_3Q$. As $A$ is not centralized by $\pi$ we have that $A \leq
O_3(C_{H/Q}(d_3Q)) = T Q/Q$. Hence $A = (D \cap T)Q/Q = \langle d_3Q,d_2 Q\rangle$ and so (ii) follows
from Lemma~\ref{c1}(vi).

As $\langle d_3,d_2 \rangle Q/Q\leq TQ/Q$, we may apply (ii) to get (iii).\end{proof}

We now determine the centralizer in $H$ of $\pi$.

\begin{lemma}\label{c3}  The following hold.
\begin{enumerate}
\item If $H$ is similar to a $3$-centralizer in $\F_2$, then $C_{H}(\pi)/\langle \pi \rangle $  has order $2^{10}\cdot 3^9$ and is similar to a $3$-centralizer in  ${}^2\E_6(2)$. Furthermore, $\pi\in C_U(\pi)'$ and $O_{3,2}(C_H(\pi)) \ge \langle \pi \rangle C_Q(\pi)C_R(\pi)$ which has shape $2\times 3^{1+6}_+{:}(2^2\times \Q_8)$.
\item If $H$ is similar to a $3$-centralizer in $\M(23)$,  then $C_{H}(\pi)/\langle \pi \rangle $ has order $2^{7}\cdot 3^9$  and is similar to a $3$-centralizer in $\M(22)$.
\end{enumerate}
\end{lemma}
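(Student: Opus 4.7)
The plan is to exploit the semidirect product decomposition $H/Z=(Q/Z)\rtimes(F/Z)$ and reduce the computation of $C_H(\pi)$ to its parts in $Q$ and $F$. Since $Z\le Z(H)$ has order $3$ while $\pi$ is an involution, any $f\in F$ with $[f,\pi]\in Z$ automatically has $[f,\pi]=1$: otherwise $\pi^f=\pi\cdot[f,\pi]^{-1}$ would commute with $[f,\pi]$ and have order $6$, contradicting the fact that it is a conjugate of $\pi$. Hence $C_{F/Z}(\pi Z)=C_F(\pi)/Z$, giving the factorization
$$C_H(\pi)=C_Q(\pi)\cdot C_F(\pi),\qquad C_Q(\pi)\cap C_F(\pi)=Z.$$

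I will next identify each factor. The involution $\pi$ acts on the symplectic $\GF(3)$-space $Q/Z$ (with form coming from commutation), and its $\pm 1$-eigenspaces are mutually perpendicular and hence non-degenerate. Combined with Lemma~\ref{basics}(ix), which gives $|C_Q(\pi)|=3^7$, this forces $C_Q(\pi)$ to be extraspecial of order $3^{1+6}$, and the exponent-$3$ condition inherited from $Q$ makes it of $+$-type. For $C_F(\pi)$, Lemma~\ref{c2}(ii) supplies $C_R(\pi)\cong 2^2\times\Q_8$ of order $32$. In the $\F_2$ case $F/Z\cong L\cong R.\Omega_6^-(2)$, and identifying the class of $\pi R$ in $\Omega_6^-(2)\cong\PSp_4(3)$ (for instance by counting its fixed points on the natural module $R/\langle\sigma\rangle$) yields $|C_L(\pi)|=2^{11}\cdot 3^2$, so $|C_H(\pi)/\langle\pi\rangle|=2^{10}\cdot 3^9$. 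In the $\M(23)$ case $F/Z\cong P$ and $\pi$ projects to the central involution of $P/O_{3,2}(P)\cong\SL_2(3)$; a parallel computation inside $P/R\approx 3^{1+2}_+.\SL_2(3)$ gives $|C_P(\pi)|=2^8\cdot 3^2$ and hence $|C_H(\pi)/\langle\pi\rangle|=2^7\cdot 3^9$.

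Finally, I will verify the similarity data for $\overline{C_H(\pi)}:=C_H(\pi)/\langle\pi\rangle$. The image of $C_Q(\pi)$ is a normal extraspecial $3^{1+6}_+$ subgroup with centre $Z$, and I will argue that it coincides with $F^*(\overline{C_H(\pi)})$: any strictly larger normal $3$-subgroup would be incompatible with the extraspecial shape, and in the $\M(23)$ case the $3^{1+2}_+$ sitting inside $P/R$ already acts on $Q$ and is absorbed into $C_Q(\pi)$ by the commutator analysis. The order count then forces $O_{3,2}(\overline{C_H(\pi)})/F^*$ to have order $2^9$ in the $\F_2$ case and $2^7$ in the $\M(23)$ case. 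Using the explicit tensor-product realization from Section~3, where $\pi$ swaps the first two factors of $W\otimes W\otimes W$ and $R$ is the image of the base group $\Q_8\times\Q_8\times\Q_8$, I will identify this quotient as $\Q_8\times\Q_8\times\Q_8$ in the $\F_2$ case and as the prescribed subgroup of order $2^7$ containing its centre in the $\M(23)$ case. The containment $O_{3,2}(C_H(\pi))\ge\langle\pi\rangle C_Q(\pi)C_R(\pi)$ with the stated shape $2\times 3^{1+6}_+{:}(2^2\times\Q_8)$ then follows directly from the orthogonal decomposition above, and the statement $\pi\in C_U(\pi)'$ is obtained by exhibiting $\pi$ as a commutator of two elements in the base group that project to non-commuting elements centralizing $\pi$. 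The main obstacle will be this final structural identification: the direct contribution $C_R(\pi)\cong 2^2\times\Q_8$ is strictly smaller than $\Q_8\times\Q_8\times\Q_8$, so I must carefully assemble the remaining quaternion factors from elements of $L$ lying outside $R$ that centralize $\pi$ and verify they descend correctly to act on $C_Q(\pi)/Z$.
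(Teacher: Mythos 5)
Your overall skeleton --- factor $C_H(\pi)$ as $C_Q(\pi)C_L(\pi)$, get $|C_Q(\pi)|=3^7$ and its extraspecial structure from the orthogonal eigenspace decomposition, then pin down $O_2(C_H(\pi))$ --- is the same as the paper's, but the two steps carrying the real content are not executable as you describe them. First, $|C_L(\pi)|=2^{11}\cdot 3^2$ does not follow from identifying the class of $\pi R$ in $L/R\cong\Omega_6^-(2)$: that only gives $|C_{L/R}(\pi R)|=2^6\cdot 3^2$ and hence the upper bound $|C_L(\pi)|\le |C_R(\pi)|\cdot 2^6\cdot 3^2$. Equality requires showing that $C_L(\pi)R$ covers the whole preimage $J$ of $C_{L/R}(\pi R)$, equivalently that $|\pi^J|=|\pi^R|=4$, and the paper devotes a substantial argument to exactly this point (normality of $C_R(\pi)\langle\pi\rangle$ and of $[R,\pi]$ in $J$, the chief-factor condition on $O_2(J)/R\langle\pi\rangle$, and $C_X(\pi)O^{2}(J)=J$). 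You assert the answer without this.

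Second, and more seriously, your plan for identifying $O_2(C_H(\pi))/\langle\pi\rangle C_Q(\pi)$ as $\Q_8\times\Q_8\times\Q_8$ cannot succeed inside the explicit tensor-product model: the group $X$ of Section~3 has index $40$ in $L$, and $C_X(\pi)$ has index $2^4$ in $C_L(\pi)$, so the missing $2^4$ of $O_2(C_J(\pi))$ lies outside $X$ altogether and there are no ``elements of $L$ lying outside $R$'' available in your model from which to assemble the remaining quaternion factors. The paper's route is structurally different: it first shows that the kernel of $C_J(\pi)$ acting on $C_Q(\pi)\cong 3^{1+6}_+$ is exactly $\langle\pi\rangle$ (using Lemma~\ref{c1} and the chief-factor condition), so $C_J(\pi)/\langle\pi\rangle$ embeds in $\Sp_6(3)$, and then exploits the fact that the normalizer in $\GSp_6(3)$ of the elementary abelian group $\wt{[R,\pi]}$ of order $2^3$ is $\Sp_2(3)\wr\Sym(3)$, with a diagonal Sylow $3$-subgroup forcing $O_2(\wt J)=O_2(\Sp_2(3)\wr\Sym(3))\cong \Q_8\times\Q_8\times\Q_8$. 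Some argument of this kind is unavoidable. Finally, your sketch for $\pi\in C_U(\pi)'$ is wrong as stated: a commutator of two base-group elements lies in the base group, and $\pi$ does not. The paper instead uses that $C_R(\pi)=[C_R(\pi),C_L(\pi)]$ while $C_L(\pi)/C_R(\pi)\approx 2^{1+4}_+.(\Sym(3)\times 3)$ contains its central involution $\pi C_R(\pi)$ in its derived subgroup.
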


\begin{proof} We first consider (i). In this case we may calculate in $U$. We have $C_U(\pi) = C_Q(\pi) C_L(\pi)$.  From Lemma~\ref{basics} (ix) we obtain
 $C_{Q}(\pi) \cong 3^{1+6}_+$. So we just need to determine $C_L(\pi)$. We  know
$C_L(\pi) \ge C_X(\pi)$  and  $$C_X(\pi) \approx 2\times ((2^2 \times \Q_8). (3\times \Sym(3)))$$  by Lemma~\ref{c2}(ii).

Since $\Omega_6^-(2)$ has a unique conjugacy class of involutions {whose centralizer has order divisible by 9},
we have  $$C_{L/O_2(L)}(\pi O_2(L)) \approx  2^{1+4}_+.(3 \times \Sym(3)).$$ Let $J$ be the preimage of this
group. Of course $C_L(\pi) = C_J(\pi)$. Using the fact that $C_X(\pi)/O_2(C_X(\pi)) \cong 3 \times \Sym(3)$, we
have $J= O_2(J)C_X(\pi)$. Furthermore, we know   $O_2(J)/R\langle \pi\rangle$ is a $J$-chief factor. In
particular, $J/R$ has no quotients isomorphic to $\Sym(4)$ or $\Alt(4)$.
 We claim  $C_J(\pi) R = J$.   By the definition of $J$, we have $R \langle \pi \rangle$ is
normal in $J$.  Thus $C_R(\pi) $, which, by Lemma~\ref{c2}, is  the preimage of  $Z(R\langle \pi \rangle/Z(R))$
is normalized by $J$.  Since $C_R(\pi) \ge [R,\pi]$ by Lemma~\ref{c2}, $R\langle \pi\rangle/C_R(\pi)$ is elementary abelian of order $2^3$. Hence $J/C_J(
R\langle \pi\rangle/C_R(\pi))$ is isomorphic to a subgroup of $\GL_3(2)$. Moreover,  as $J$ normalizes $R$, $J/C_J(
R\langle \pi\rangle/C_R(\pi))$ is isomorphic to a subgroup of $\Sym(4)$. Since $J/R$ has no quotients isomorphic to $\Sym(4)$ or $\Alt(4)$ and $\pi$ is centralized
by a Sylow $3$-subgroup of $J$ while $R/C_R(\pi)$ is not, we deduce that $J/C_J(
R\langle \pi\rangle/C_R(\pi)) \cong \Sym(3)$ or has order $3$. In particular,  $C_R(\pi)\langle \pi \rangle$ is normalized by $J$.  Therefore, by Lemma~\ref{perp},  $Z(C_R(\pi)\langle \pi \rangle)=
[R,\pi]\langle \pi \rangle$ and $[R,\pi]= R\cap [R,\pi]\langle \pi\rangle$ are also normalized by $J$. As
$|[R,\pi]|=2^3$, by Lemma~\ref{c2}(iii), we now have $C_J(\pi)$ has index at most $2^3$ in $J$.  Suppose that the
index is $2^3$.  Then, as $|R:C_R(\pi)|= 2^2$, we have $C_J(\pi) R$ has index $2$ in $J$. Since
$C_X(\pi)O^{2}(J)= J$, we  have a contradiction. Thus $|J:C_J(\pi)|= |R:C_R(\pi)|=2^2$ and so $C_J(\pi)R = J$ as claimed. In particular, we have
$|O_2(C_J(\pi))| = 2^{10}$ and $[R,\pi] \le Z(O_2(C_J(\pi)))$. We need to show that $O_2(C_J(\pi))/\langle
\pi\rangle \cong \Q_8 \times \Q_8\times \Q_8$.

 Of course $C_J(\pi)$ acts on $C_{Q}(\pi) \cong 3^{1+6}_+$. Let $K_0$ be the kernel of this action.
 Then $K_0$ acts faithfully on $[J,\pi]$ so $K_0$ is isomorphic to a subgroup of $\Sp_2(3)$. By Lemma~\ref{c1} (i), (ii) and (iii), $K_0$ is a $2$-group  and
 $K_0$ is normal in $C_L(\pi)$. Since $O_2(C_L(\pi))R/\langle \pi\rangle R$ is a $C_L(\pi)$-chief factor, we have that
 $K_0 \leq R\langle \pi\rangle$. So, as $K_0$ is isomorphic to a subgroup of $\Q_8$,  we have $K_0=\langle \pi \rangle$.
Therefore $\wt J =C_J(\pi)/\langle \pi\rangle$ is isomorphic to a subgroup of
$\Sp_6(3)$.

Note that  $\wt {[R,\pi]}$ is normalized by $\wt J$ and  has order $2^3$ by Lemma~\ref{c2} (iii). There is a
unique conjugacy class of elementary abelian subgroups of order $2^3$ in $\GSp_6(3)$ and the normalizer of such a subgroup is a
subgroup of $\Sp_6(3)$ which preserves the decomposition of the natural $6$-dimensional symplectic space into a
perpendicular sum of three $2$-dimensional subspaces and is isomorphic to   $W=\Sp_2(3)\wr \Sym(3)$. Let
$T_0$ be a Sylow 3-subgroup of $C_J(\pi)$. Since $\wt {T_0}$ acts non-trivially on {$\wt{ [R,\pi]}$}, we see that $T_0$
permutes the direct factors of the  base group $B$ of $W$ and $T_0\cap B$ is a diagonal element of order $3$.  Therefore  $O_2( \wt J) \le O_2(W)$. But this means that $O_2(\wt J)=O_2(W)\cong \Q_8 \times \Q_8
\times \Q_8$. In particular, $\wt {C_Q(\pi) C_J(\pi)}$ is similar to a centralizer in a group of type
${}^2\E_6(2)$ and this completes the proof of  (i).

Suppose that $H$ is similar to a $3$-centralizer in $\M(23)$. Then $H/Z \cong M/Z$. Hence $C_{H/Z}(\pi Z) \cong C_{M/Z}(\pi Z)$ and $|C_H(\pi Z)|= |C_M(\pi Z)|$. Hence, as the definition of a $3$-centralizer of type $\M(22)$ only requires the determination of the action of $O_{3,2}(C_H(\pi))$ on $C_Q(\pi)$, we may work in $M$.  We have seen that $|\pi^J|=|\pi^R|=4$, hence also $|\pi ^{C_M(\pi)}|=4$  which means that $C_M(\pi)R= N_M(\langle \pi\rangle R)$.    Now we intersect $C_J(\pi)$ with $M$ and see that $O_2(C_M(\pi))$ has order $2^8$ and $ C_{\wt M}(\pi)$ has order $2^7$ and
  contains $\wt {[R,\pi]}$. Thus $C_{\wt M}(\pi)$ is similar to a centralizer in a group of type $\M(22)$, {which is (ii)}.

  Suppose that $\pi \not \in C_L(\pi)'$. Then, as $C_R(\pi)=[C_R(\pi),C_L(\pi)]$, $\pi \not \in (C_L(\pi)/C_R(\pi))'$.
  But $C_L(\pi)/C_R(\pi)\approx 2^{1+4}_+.(\Sym(3)\times 3)$ and $\pi C_R(\pi)$ is  contained in the centre of this group, so we have  a contradiction.
  Thus $\pi \in C_L(\pi)'\le C_U(\pi)'$ as claimed.

\end{proof}

When $H$ is similar to a $3$-centralizer in $\F_2$, we need to determine the centralizer of a further involution.

\begin{lemma}\label{CR} Suppose that $H$ is similar to a $3$-centralizer in $\F_2$ and let $r \in R\setminus Z(R)$ be an involution. Then the following hold.
\begin{enumerate}
\item $|C_Q(r)|=3^5$, $C_R(r) \cong 2\times 2^{1+4}_-$, $C_{L}(r)R/R \cong 2^{4}.\Alt(5)$ and
$|O_2(C_H(r)) |=2^5$;
\item $C_H(r)/O_2(C_H(r))$ is similar to a $3$-centralizer in $\Co_2$; and
\item  $O_2(C_H(r)) \cong 2^{1+4}_-$ and  is the unique maximal $C_Q(r)$ signalizer in $C_H(r)$.
\end{enumerate}
\end{lemma}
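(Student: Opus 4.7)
The plan is to address the three parts in turn, appealing to the explicit tensor-product description of $R$ acting on $V = Q/Z$ from Section~3 and to Definition~\ref{co2}.

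For part (i), I would first observe that all $54$ non-central involutions of $R$ are $L$-conjugate: the minus-type quadratic form on $R/Z(R)$ has $27$ non-zero singular $1$-spaces on which $L/R \cong \Omega_6^-(2)$ acts transitively, and within $R$ the two involutions above each singular line are fused by any $y \in R$ with $(\bar y, \bar r) \ne 0$. Without loss of generality take $r$ to be the image modulo $K$ of $(i,i,I)$ where $i \in \Q_8$ has order $4$. Working over $\GF(9)$, $i$ acts on $W$ with eigenvalues $\pm\alpha$ with $\alpha^2 = -1$, so $i \otimes i$ on $W \otimes W$ has eigenvalue $+1$ of multiplicity $2$; hence $C_V(r) = C_{W\otimes W}(i\otimes i)\otimes W$ is $4$-dimensional. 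Since $r \in \Sp(V)$ is semisimple (order coprime to~$3$), $V = C_V(r) \perp [V,r]$ is a perpendicular sum of non-degenerate symplectic subspaces, so $C_Q(r)$ is extraspecial of order $3^5$; inheriting exponent~$3$ from $Q$ gives $C_Q(r) \cong 3^{1+4}_+$. For $C_R(r)$, restricting the minus-type quadratic form on $R/Z(R)$ to $\bar r^\perp/\langle\bar r\rangle$ yields a non-degenerate minus-type form in dimension $4$, so $C_R(r)/\langle r\rangle \cong 2^{1+4}_-$. Since every element of $R$ squares into $R' = \langle\sigma\rangle$, the abelianisation $C_R(r)/[C_R(r),C_R(r)]$ is elementary abelian, hence $r$ lies in a direct summand and the extension splits as $C_R(r) \cong 2 \times 2^{1+4}_-$ (with $\langle\sigma\rangle$ being the centre of the extraspecial factor). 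Finally the count $|r^L| = 54$ yields $|C_L(r)R/R| = 960$, which matches the order of the singular-point parabolic $2^4{:}\Alt(5)$ of $\Omega_6^-(2)$ that contains $C_L(r)R/R$, forcing equality.

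For parts (ii) and (iii), note first that $O_3(C_H(r)) = C_Q(r)$: in the quotient $C_H(r)/C_Q(r) \cong C_L(r)$ the composition structure is $2^*.\Alt(5)$ with trivial $O_3$, so no normal $3$-subgroup can survive. Thus $C_Q(r)$ is characteristic in $C_H(r)$, and since $O_2(C_H(r))$ and $C_Q(r)$ are both normal in $C_H(r)$ with coprime orders and trivial intersection, $[O_2(C_H(r)), C_Q(r)] = 1$; in particular $O_2(C_H(r))$ lies in $K := C_{C_H(r)}(C_Q(r))$. The image of $O_2(C_H(r))$ in $C_H(r)/C_Q(r) \cong C_L(r)$ is contained in $O_2(C_L(r))$, the preimage in $C_L(r)$ of the parabolic's unipotent radical $2^4$, which has order $2^{10}$. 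A coprime-action analysis (Thompson $A{\times}B$-type argument with $A = C_Q(r)$) together with an explicit tensor calculation on $[V,r]$ identifies the $C_Q(r)$-centralised part of this $2^{10}$-group as extraspecial of order $2^5$ and type $2^{1+4}_-$; because $r \in Z(C_H(r)) \cap O_2(C_H(r))$ and $|Z(2^{1+4}_-)| = 2$, its centre is forced to be $\langle r\rangle$. The orders then match Definition~\ref{co2}: $|C_H(r)/O_2(C_H(r))| = 3^6 \cdot 2^7 \cdot 5$, with $O_3 \cong 3^{1+4}_+$ (the image of $C_Q(r)$), then the extraspecial $2^{1+4}_-$-layer above $O_3$, and finally the $\Alt(5)$-quotient inherited from the parabolic structure of $C_L(r)R/R$. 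This establishes (ii) together with the remaining assertion $|O_2(C_H(r))| = 2^5$ of (i). Part (iii) is built into this identification: any $C_Q(r)$-invariant $2$-subgroup of $C_H(r)$ automatically centralises $C_Q(r)$ (coprime orders with $C_Q(r)$ normal), hence lies in $K$, and the coprime-action argument shows the unique maximal such subgroup coincides with $O_2(C_H(r))$.

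The delicate point is that $O_2(C_H(r))$ is a \emph{different} copy of $2^{1+4}_-$ from the extraspecial direct factor of $C_R(r) \cong 2 \times 2^{1+4}_-$: the latter has centre $\langle\sigma\rangle$, whereas $O_2(C_H(r))$ has centre $\langle r\rangle$. Distinguishing these copies, and confirming both extraspeciality and the minus type via the coprime $C_Q(r)$-action combined with the explicit tensor description of Section~3, is the main technical hurdle.
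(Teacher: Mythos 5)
Your part (i) is sound and runs essentially parallel to the paper: transitivity of $L$ on the $54$ non-central involutions of $R$ via the singular points of the minus-type form, the stabilizer $2^4{:}\Alt(5)$, and the perpendicular splitting $\bar r^\perp=\langle \bar r\rangle\perp U$ giving $C_R(r)\cong 2\times 2^{1+4}_-$ (your stated reason via the abelianisation is not quite the right one, but the perpendicular-splitting idea you invoke is). Your eigenvalue computation for $C_V(r)$ with $r=(i,i,I)$ is a legitimate, more computational substitute for the paper's slicker observation that $\sigma$ inverts $Q/Z$ while $r$ and $r\sigma$ are $R$-conjugate, which forces $|C_Q(r)|=|C_Q(r\sigma)|=3^5$ at once. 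Your reduction of (iii)'s uniqueness statement to the observation that a $C_Q(r)$-invariant $2$-subgroup commutes with the normal subgroup $C_Q(r)$ is exactly the paper's argument.

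The genuine gap is the decisive claim that $C_{C_H(r)}(C_Q(r))$ has $2$-part of order exactly $2^5$ and that $O_2(C_H(r))\cong 2^{1+4}_-$. You defer this to ``a coprime-action analysis together with an explicit tensor calculation on $[V,r]$,'' but no such calculation is given, and it is not clear one could be carried out in the explicit model of Section~3: $O_2(C_H(r))$ meets $R$ only in $\langle r\rangle$ and projects onto the unipotent radical $2^4$ of the singular-point parabolic of $\Omega_6^-(2)$, which is \emph{not} contained in the explicitly constructed subgroup $X$ (whose image in $L/R$ is only $3^3.\Sym(4)$); the rest of $L=N_{\Sp_8(3)}(R)$ is only pinned down abstractly via Kleidman--Liebeck. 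The paper closes this step representation-theoretically: $C_R(r)/\langle r\rangle\cong 2^{1+4}_-$ acts faithfully on $C_Q(r)\cong 3^{1+4}_+$, and the normalizer in $\Sp_4(3)$ of such a subgroup has shape $2^{1+4}_-.\Alt(5)$, so comparing with $|C_L(r)|=2^{12}\cdot 3\cdot 5$ forces the kernel of the action of $C_H(r)$ on $C_Q(r)$ to have $2$-part of order $2^5$; then, since $O_2(C_H(r))$ acts faithfully on $[Q,r]$ and is normalized by $C_L(r)$, the same $\Sp_4(3)$ data identifies it as $2^{1+4}_-$ (with centre $\langle r\rangle$, distinct from the factor of $C_R(r)$ with centre $\langle\sigma\rangle$ --- your remark on this point is correct and worth keeping). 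Without some such argument, parts (ii) and (iii) and the assertion $|O_2(C_H(r))|=2^5$ remain unproved.
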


\begin{proof} We may suppose that $H=U$. The involutions in $R$ correspond to singular vectors for the action of $L/R\cong \Omega_6^-(2)$ on $R/Z(R)$. Since
$L$ acts transitively on such elements, we have that $r$ is uniquely determined up to $L$-conjugacy. As $r$ and $r\sigma$ are conjugate in $R$, the subgroup
$\langle r, \sigma\rangle$ acts on $Q$ with $\sigma$ inverting $Q/Z$ and $r$ and  $r\sigma$ both centralizing an
extraspecial subgroup of order $3^5$.  Since $r$ is an involution in $R$, we have $C_R(r) \cong 2 \times
2^{1+4}_-$ and, as $L$ is transitive on the involutions in $R$, we get $C_L(r)R/R= C_{L/R}(r) \approx
2^4.\Alt(5)$ as this is the stabilizer  in $\Omega_6^-(2)$ of a singular $1$-space. Now $C_R(r)/\langle r\rangle$
acts faithfully on $C_Q(r)$ and in $\Sp_4(3)$, the normalizer of such a subgroup has shape $2^{1+4}_-.\Alt(5)$.
We conclude that  $C_{C_U(r)}(C_Q(r))$ has order $2^5$ and this completes the proof of part (i).

Part (ii) follows from the details of the proof (i) and the definition of a $3$-centralizer of type $\Co_2$.

Since $O_2(C_U(r))$ acts faithfully on $[Q,r]$ and is normalized by $C_L(r)$, we deduce that $O_2(C_U(r)) \cong
2^{1+4}_-$.  The final part of (iii) is clear as $C_Q(r)$ signalizers in $C_U(r)$ must centralize $C_Q(r)$.
\end{proof}

\begin{lemma}\label{CR2} Suppose that $H$ is similar to a $3$-centralizer in $\F_2$. Then the following statements hold.
\begin{enumerate}
\item  There are exactly two  $H$-conjugacy classes of elements in $Q\setminus Z$.
\item Let $r$ be an  involution in $R \setminus Z(R)$.  If $\rho \in C_{Q}(r)\setminus Z(Q)$, then $C_H(\rho)/\langle \rho\rangle$ is  similar to a
$3-$centralizer in $\M(22)$ and has order $2^8\cdot 3^{9}$. Furthermore, $C_H(\rho)$ splits over $\langle
\rho\rangle$.
\end{enumerate}
\end{lemma}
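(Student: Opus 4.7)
For part (i), I begin by observing that the three elements of each coset $\rho Z \subset Q \setminus Z$ are mutually $Q$-conjugate: since $\rho \notin Z(Q) = Z$, the commutator map $q \mapsto [q,\rho]$ is a surjective homomorphism $Q \to Z$ (because $Q' = Z$), so for every $z \in Z$ some $q \in Q$ satisfies $q \rho q^{-1} = \rho z$. Hence the $H$-conjugacy classes in $Q \setminus Z$ are in bijection with the $L$-orbits on $V \setminus \{0\}$, where $L = H/Q$ acts on the $8$-dimensional symplectic module $V = Q/Z$. Using the explicit realisation $V = W \otimes W \otimes W$ and the action of $L$ from Section~3, I would identify two orbits: one consisting of those $v$ centralized by some involution of $R \setminus Z(R)$ (which includes $\rho Z$ for $\rho \in C_Q(r) \setminus Z$), and its complement. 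Lemma~\ref{CR} together with the tensor-product structure permits a direct stabilizer calculation giving orbit sizes $1440$ and $5120$, summing to $6560 = 3^8 - 1$, which confirms there are exactly two orbits.

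For part (ii), since $C_H(\rho) \cap Q = C_Q(\rho)$ has order $3^8$ and $C_H(\rho)Q/Q = \Stab_L(\rho Z)$ has order $2^8 \cdot 3^2$ by (i), we obtain $|C_H(\rho)| = 2^8 \cdot 3^{10}$, hence $|C_H(\rho)/\langle \rho\rangle| = 2^8 \cdot 3^9$. To describe $\bar C := C_H(\rho)/\langle\rho\rangle$: the restriction of the commutator form on $Q$ to $C_Q(\rho)/Z$ has radical $\langle \rho Z\rangle$ and a nondegenerate $6$-dimensional quotient, and together with the exponent $3$ inherited from $Q$ this yields $C_Q(\rho)/\langle\rho\rangle \cong 3^{1+6}_+$, which is $F^*(\bar C)$. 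Since $Z \le Z(H)$ is centralized by everything in $C_H(\rho)$ and $\langle \rho\rangle$ is central in $C_H(\rho)$, we also get $Z(F^*(\bar C)) = Z(\bar C)$.

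To verify that $\bar C$ is similar to a $3$-centralizer in $\M(22)$ per Definition~\ref{d1}(ii), it remains to show that $O_{2,3}(\bar C)/F^*(\bar C)$ acts on $F^*(\bar C)$ as a subgroup of order $2^7$ in $\Q_8 \times \Q_8 \times \Q_8$ containing the centre. For this I would exhibit a $\Stab_L(\rho Z)$-invariant decomposition of $C_Q(\rho)/\langle \rho\rangle$ into three mutually perpendicular nondegenerate $2$-dimensional symplectic subspaces, inherited from the tensor factorisation $V = W \otimes W \otimes W$; then $\Stab_L(\rho Z)$ acts on each factor by a $\Q_8$, and the required index-$4$ subgroup of $\Q_8^3$ arises as the kernel of the combined similitude character. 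Finally, the splitting of $C_H(\rho)$ over $\langle \rho\rangle$ follows by producing an explicit complement inside $F \cap C_H(\rho)$, using the semidirect decomposition $H = QF$ of Section~3. The principal difficulty will be identifying this tripartite decomposition and verifying the exact $\Q_8^3$-image.
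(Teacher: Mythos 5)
Your reduction of (i) to counting $L$-orbits on $(Q/Z)\setminus\{0\}$, via $Q$-conjugacy inside each nontrivial coset of $Z$, is exactly the (implicit) first step of the paper, and the target orbit sizes $1440+5120=3^8-1$ are the ones the paper computes (taking $\rho$ with $C_R(\rho)$ a fours-group for the small orbit and $e\otimes e\otimes e$ for the large one). The order count $|C_H(\rho)|=2^8\cdot 3^{10}$ and the identification $C_Q(\rho)/\langle\rho\rangle\cong 3^{1+6}_+$ are also correct. But the two substantive claims of (ii) are not actually established, and in both cases the route you indicate would not work.

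First, the step you yourself flag as ``the principal difficulty'' --- that $O_2(C_L(\rho))$ acts on $F^*(\bar C)$ as a subgroup of order $2^7$ of $\Q_8\times\Q_8\times\Q_8$ containing the centre --- is the entire content of the similarity to a $3$-centralizer in $\M(22)$, and the mechanism you propose for it is the wrong one. The tensor factorisation $V=W\otimes W\otimes W$ gives a multiplicative decomposition $8=2\cdot2\cdot2$ of $V$, not a perpendicular direct sum $6=2\perp2\perp2$ of $(\rho Z)^\perp/\langle\rho Z\rangle$, and the three $\Q_8$'s in question are not the three tensor factors of $R$. The paper instead chooses $\rho\in C_Q(E_0)$ where $E_0=C_R(\rho)$ is a fours-group complementing $\langle\sigma\rangle$ in an elementary abelian $E\le R$ of order $2^3$; writing $J=O_2(C_L(\rho))$ (of order $2^7$) one has $C_Q(\rho)=\langle\rho\rangle\times[Q,J]$ with $[Q,J]\cong 3^{1+6}_+$, and the three involutions of $E_0$ decompose $[Q,J]/Z$ into a perpendicular sum of three $2$-spaces (their fixed spaces). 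Since $J$ centralizes $E_0$ and is normalized by a Sylow $3$-subgroup of $C_L(\rho)$, it must lie in the base group $\Q_8^3$ of $\Sp_2(3)\wr\Sym(3)$. Without this eigenspace decomposition under $E_0$ there is no visible route to the $\Q_8^3$-image, and your ``kernel of the combined similitude character'' is speculation rather than an argument.

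Second, the splitting argument cannot work as stated: a complement to $\langle\rho\rangle$ in $C_H(\rho)$ has order $2^8\cdot 3^9$, whereas $F\cap C_H(\rho)$ has order at most $2^8\cdot 3^3$ because $F\cap Q=Z$; so no complement lies ``inside $F\cap C_H(\rho)$''. The complement must swallow almost all of $C_Q(\rho)$, and the paper obtains it from the same decomposition as above: $C_H(\rho)=C_Q(\rho)C_L(\rho)=\langle\rho\rangle\times[Q,J]C_L(\rho)$. So both halves of (ii) ultimately rest on the identification $C_Q(\rho)=\langle\rho\rangle\times[Q,J]$ and on the action of $E_0$, neither of which appears in your plan.
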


\begin{proof} We may suppose that $H = U$.
Let $E$ be an elementary abelian subgroup of $R$ of order $2^3$.
 Then, as $R \cong 2^{1+6}_-$,   we have $E/\langle \sigma\rangle$ corresponds to a maximal singular
subspace of $R/\langle \sigma \rangle$ and so, setting  $N=N_L(E) $, we have $N/R \approx 2^{1+4}_+.(3\times
\Sym(3))$ is the stabilizer in $ \Omega_6^-(2)$ of such a subspace and is a maximal subgroup of $L$.  Notice that $R$ acts transitively by conjugation on the set $\mathcal H$ of hyperplanes of $E$ which complement $\langle \sigma\rangle$.  Since $[Q,E]=[Q,\sigma]=Q$, $Q/Z$ decomposes as a direct sum of centralizers of elements of $\mathcal H$.  Let $E_0 \in \mathcal H$ and set $Q_0 = C_Q(E_0)$.  Noting that $|\mathcal H|=4$, we deduce that $Q_0$ is extraspecial of order $3^3$.
 Since $R$ acts transitively on both $\mathcal H$  and $C_Q(E_0)^R$ and, since $N$ acts on both sets, we have $N= N_N(E_0)R = N_N(Q_0)R$ and  $N_N(Q_0) = N_N(E_0)$.
Recalling that  $NQ$ is a maximal subgroup of $U$, we obtain $N_U(Q_0) = N_N(E_0)Q$.  Finally, as an element of order $3$ from $N$ acts non-trivially on $N_R(E_0)/E$, we infer that  $N_U(Q_0)/C_U(Q_0)Q_0 \cong \Sp_2(3)$.

Let $\rho \in Q_0$. Then, as $C_R(E_0)Q_0$ acts transitively on the non-central elements of $Q_0$,  $C_{N_U(Q_0)}(\rho) R= N_L(Q_0)R$  and  $C_R(\rho) = E_0$. Thus $$C_L(\rho) \approx
2^2.2^{1+4}_+.(\Sym(3) \times 3).$$  In particular, $|C_L(\rho)|= 2^8\cdot 3^2$ and $|C_U(\rho)|=2^8\cdot 3^{10}$
 and so there are exactly  $1440$ conjugates of $\rho Z$ in $Q/Z$. Now note that the $1$-space
$\langle e\otimes e\otimes e\rangle$ of $V \cong Q/Z$  is centralized by $$T_1=\langle \sigma,  D, (m,m,I),
(I,m,m), (1,2,3),(1,2)\rangle \le L$$ where $m \in \GSp(W)$ fixes $e$ and negates $f$ and that $C_R(e\otimes e\otimes
e)= 1$. Since $T_1 R$ has index $2^{10}\cdot 5$ in $L$, there are at least  $5120$ conjugates of $ e\otimes
e\otimes e$ in $Q/Z$. As $\rho Z$ and $e\otimes e \otimes e$ have different $3$-parts in the orders of their centralizers  and $3^8-1 = 1440+ 5120$, we have proved that there are exactly two $U$-conjugacy classes of elements in
$Q\setminus Z$. This completes the proof of (i).

Now define $J=O_2(C_L(\rho))$  and note that $J$ has order $2^7$. {We have that $C_Q(J) \cap [Q,J] = Z$, so $C_Q(\rho) = \langle \rho \rangle \times [Q,J]$ and then}
$[Q,J]$ is extraspecial of order $3^7$. Furthermore, $J \cap R = E_0 \le Z(J)$ and $E_0$ decomposes $[Q,J]/Z$ into a
perpendicular  sum of three $2$-dimensional subspaces each centralized by an involution of $E_0$. It follows that
$C_L(\rho) $ embeds into the subgroup $(\Sp_2(3) \wr \Sym(3)).2$ of $\GSp_8(3)$. Since $J$ centralizes each
involution in $E_0$ and is normalized by a Sylow $3$-subgroup, we have $J$ embeds into $\Q_8\times \Q_8\times
\Q_8$ and it follows that $[Q,J]C_L(\rho)$ is similar to a $3$-centralizer in $\M(22)$. Finally, as  {$C_U(\rho)=
C_Q(\rho)C_L(\rho) = \langle \rho \rangle \times [Q,J]C_L(\rho)$}, $C_U(\rho)$ splits over $\langle \rho\rangle$. Thus (ii) holds.
\end{proof}

\section{The centralizer in $G$ of $\pi$ and the proof of Theorem~\ref{Main1}}

We continue the notation of the previous sections. Set $K= C_G(\pi)$ and denote by  $\;\wt{}\;:K \rightarrow K/\langle \pi\rangle$  the natural
homomorphism from $K $ to $K/\langle \pi\rangle$. In this section we prove Theorems~\ref{tpi1} and
\ref{tpi2}. Using Theorem~\ref{tpi1} we then prove Theorem~\ref{Main1}.

\begin{theorem}\label{tpi1} If $H$ is similar to a $3$-centralizer in $\M(23)$, then
$K/\langle \pi\rangle\cong  \M(22)$.
\end{theorem}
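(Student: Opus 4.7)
The plan is to apply the identification theorem for $\M(22)$ from \cite{PSS} to the quotient $\widetilde K = K/\langle \pi\rangle$. That theorem requires a $3$-central element whose centralizer is similar to a $3$-centralizer in $\M(22)$, together with a weak-closure failure for this element in a Sylow $3$-subgroup of its centralizer.

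First I would let $\widetilde Z$ denote the image of $Z$ in $\widetilde K$. Since $\pi$ centralizes $Z$ and $Z \cap \langle \pi\rangle = 1$, we have $C_{\widetilde K}(\widetilde Z) = C_K(Z)/\langle \pi\rangle = C_H(\pi)/\langle \pi\rangle$, which by Lemma~\ref{c3}(ii) is similar to a $3$-centralizer in $\M(22)$. A short order calculation using Lemmas~\ref{basics}(ix) and \ref{c2}(ii) and the construction of $\pi$ in Section~3 then confirms that $C_S(\pi)$ is a Sylow $3$-subgroup of $C_H(\pi)$, so $\widetilde{C_S(\pi)}$ is a Sylow $3$-subgroup of $C_{\widetilde K}(\widetilde Z)$. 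Thus the first hypothesis of the identification theorem is in place.

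The core of the argument is the weak-closure assertion: exhibit $k \in K$ with $Z^k \neq Z$ and $Z^k \le C_S(\pi)$. By the hypothesis of Theorem~\ref{Main1} there exists $g \in G$ with $Z^g \neq Z$ and $Z^g \le S$, but a priori $\pi^g \neq \pi$ and $g$ must be adjusted. I would work inside the $3$-local subgroup $N = N_G(\langle Z, Z^g\rangle)$. Using Lemmas~\ref{basics}(iii)--(vi) and \ref{el9}, I would first classify the possible $H/Q$-types of the cyclic subgroup $Z^g Q/Q$, which pins down the structure of $\langle Z,Z^g\rangle$ and hence of $N$. The involution $\pi^g \in H^g = C_G(Z^g)$ plays the role in $H^g$ that $\pi$ plays in $H$; the distinctive property $|C_{Q^g}(\pi^g)| = 3^7$ together with Lemmas~\ref{basics}(ix), \ref{invscz3}, Lemma~\ref{piact} and the fusion-control Lemma~\ref{fusion} would then be combined to show that within $N$ any involution playing the role of $\pi^g$ is conjugate to $\pi$. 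Composing the conjugating element with $g$ yields the required $k \in K$ satisfying $Z^k \le C_S(\pi)$.

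With both hypotheses verified, the identification theorem of \cite{PSS} produces $\widetilde K \cong \M(22)$, which is Theorem~\ref{tpi1}. The principal obstacle, as the authors forecast in the introduction, is precisely this weak-closure step: the given non-weak closure of $Z$ in $S$ provides no direct control over $\pi^g$, so the full structural machinery of Sections~3 and~4 --- the explicit action of $H/Q$ on $Q/Z$, the distinguishing centralizer order $|C_Q(\pi)| = 3^7$, and the fusion-control Lemma~\ref{fusion} --- must be deployed in concert to transfer non-closure from $S$ to $C_S(\pi)$.
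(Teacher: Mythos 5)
Your overall frame agrees with the paper: Lemma~\ref{c3}(ii) supplies the $\M(22)$-type $3$-centralizer in $\wt K$, one must then show $\wt Z$ is not weakly closed in $\wt{C_S(\pi)}$ with respect to $\wt K$, and \cite[Theorem 1]{PSS} together with the order $|C_H(\pi)|=2^8\cdot 3^9$ forces $\wt K\cong \M(22)$ rather than $\Aut(\M(22))$. The gap is in the weak-closure step, which is the entire content of Lemma~\ref{wk2} and is not obtainable by the direct conjugation you describe.

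You propose to take $g$ with $Y=Z^g\le S$, $Y\ne Z$, and to conjugate $\pi^g$ to $\pi$ inside $N_G(\langle Z,Y\rangle)$. Note first that an arbitrary conjugation is useless: you need $\pi^{gh}=\pi$ with $h\in H^g=C_G(Y)$, since it is $gh\in K$ and $Z^{gh}=Y^h=Y$ that certifies non-weak-closure; this is exactly the observation recorded as Claim~\ref{cl4}. The paper does \emph{not} establish such an $h$ directly. Instead Lemma~\ref{wk2} argues by contradiction: assuming $Z$ weakly closed in $C_S(\pi)$ with respect to $K$, it deduces that $\pi$ and $\pi^g$ are conjugate in neither $H$ nor $H^g$, and then uses this non-conjugacy -- via Lemma~\ref{invscz3}, which separates the classes $\pi,\pi\sigma,\sigma$ by $|C_Q(\cdot)|=3^7,3^3,3$ -- to constrain how $\pi$ acts on $Q^g$. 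That constraint drives the analysis of $Q\cap Q^g$ and of the $d_1/d_2/d_3$ action types in Claims~\ref{cl1}--\ref{cl5}, ending in the counting contradiction that $W=[Q,Y]Y=[Q^g,Z]Z$ is elementary abelian of order $3^6$ and $|Z^{\langle Q,Q^g\rangle}|$ would have to be $82$ or $163$, which is incompatible with $|\GL_6(3)|$. The conjugacy you assert (``any involution playing the role of $\pi^g$ is conjugate to $\pi$ within $N$'') is precisely what cannot be read off from Lemmas~\ref{basics}, \ref{invscz3}, \ref{piact} and \ref{fusion}: a priori $\pi^g$, once brought into $H$, could act on $Q$ as $\pi\sigma$ or $\sigma$ does. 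Two further omissions: the case $Y\le Q$ needs a separate (and genuinely direct) argument, namely Lemma~\ref{wk1}; and even the preliminary reduction to $[\pi,Y]=1$ when $Y\not\le Q$ (Claim~\ref{cl3}) already uses that $Y$ is weakly closed in $Q^g$, i.e.\ the contradiction hypothesis, so it is not available to a constructive argument of the kind you sketch.
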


\begin{theorem}\label{tpi2}
If $H$ is similar to a $3$-centralizer in $\F_2$, then $K \cong (2\udot {}^2\E_6(2)){:}2$.
\end{theorem}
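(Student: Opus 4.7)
The plan is to apply the identification theorem for groups with socle ${}^2\E_6(2)$ from \cite{PSS} to the quotient $\widetilde K = K/\langle \pi \rangle$ and then lift the resulting isomorphism to $K$. By Lemma~\ref{c3}(i), $C_{\widetilde K}(\widetilde Z) = \widetilde{C_H(\pi)}$ is similar to a $3$-centralizer in ${}^2\E_6(2)$ in the sense of Definition~\ref{d1}(i), so the one remaining hypothesis of \cite{PSS} is that $\widetilde Z$ not be weakly closed in a Sylow $3$-subgroup of $\widetilde{C_H(\pi)}$ with respect to $\widetilde K$. Verifying this failure of weak closure is the main burden of the section.

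To produce such a failure I would start from the hypothesis of Theorem~\ref{Main2}: $Z$ is not weakly closed in $S$ with respect to $G$, so some $g \in G$ satisfies $Z \ne Z^g \le S$. The goal is to transform this into an $h \in K$ with $Z \ne Z^h \le C_S(\pi)$; after that, passing to $\widetilde K$ yields the required statement. The plan is to use the dichotomy of Lemma~\ref{CR2}(i), which lists exactly two $H$-classes of elements in $Q \setminus Z$ with distinguishable centralizer orders, to pin down which $H$-class can contain $G$-conjugates of $Z$-generators; then locate a representative inside $C_Q(\pi) \cong 3^{1+6}_+$ (available by Lemma~\ref{basics}(ix)) or at least inside $C_S(\pi)$. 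The fusion transfer Lemma~\ref{fusion}(ii), applied with $C_G(\pi)$ in place of $H$, then converts the $G$-conjugation into a $K$-conjugation. I expect the hard part to be tracking how $G$-fusion of $Z$-generators interacts with the centralizer of $\pi$ and of the involution $r$ from Lemma~\ref{CR}; this will need the full operator description of $L$ on $Q/Z$ in Lemma~\ref{c1} together with the involution centralizer data in Lemmas~\ref{CR} and~\ref{CR2}.

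With the weak-closure failure established, \cite{PSS} identifies $\widetilde K$ as a group with socle ${}^2\E_6(2)$, and the outer involution already visible in $\widetilde{C_H(\pi)}$ (coming from the $\GSp(W)$-factor built in Section~3) upgrades this to $\widetilde K \cong {}^2\E_6(2){:}2$. To finish I would identify the central extension $1 \to \langle \pi \rangle \to K \to \widetilde K \to 1$. Lemma~\ref{c3}(i) supplies $\pi \in C_U(\pi)' \le K'$, so the extension is non-split; matching the given structure of $O_{3,2}(C_H(\pi)) \ge \langle \pi\rangle C_Q(\pi)C_R(\pi)$ from Lemma~\ref{c3}(i) against the $3$-local structure of the unique double cover $2\udot{}^2\E_6(2)$ in the Schur multiplier pins down $K \cong (2\udot {}^2\E_6(2)){:}2$, as required.
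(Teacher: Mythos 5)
Your overall architecture matches the paper's: reduce to the identification theorem of \cite{PSS} applied to $\wt K$, with Lemma~\ref{c3}(i) supplying the $3$-centralizer shape and the only remaining burden being that $\wt Z$ is not weakly closed in $\wt{C_S(\pi)}$ with respect to $\wt K$. The gap is in that burden, which is precisely where all the work lies. Your plan for producing the failure of weak closure starts from a conjugate $Z^g \ne Z$ with $Z^g \le S$ and then invokes Lemma~\ref{CR2}(i), which classifies $H$-classes of elements of $Q\setminus Z$. But nothing in the hypotheses forces $Z^g$ into $Q$: the easy case $Z^g \le Q$ is exactly the content of the paper's Lemma~\ref{wk1}, and the genuinely hard case is when $Z$ \emph{is} weakly closed in $Q$ with respect to $G$ while some conjugate $Y = Z^g$ lies in $S\setminus Q$. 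Your proposal does not engage with that case at all, and it cannot be dispatched by fusion bookkeeping: the paper's Lemma~\ref{wk2} argues by contradiction through eight separate claims (controlling $Q\cap Q^g$, conjugating $Y$ into $K$, ruling out $H$- and $H^g$-fusion of $\pi$ and $\pi^g$, forcing both $C_Q(Y)$ and $C_{Q^g}(Z)$ abelian of type $d_1$), and closes with a counting argument on the orbit $Z^{\langle Q,Q^g\rangle}$ inside an elementary abelian group of order $3^6$, where $|Z^I|\in\{82,163\}$ is incompatible with $|\GL_6(3)|$. None of that is foreshadowed by your sketch. Moreover, your appeal to Lemma~\ref{fusion}(ii) with $C_G(\pi)$ in place of $H$ is not available here: that lemma needs $C_G(x)\le H$ and $x^G\cap H = x^H$ for the relevant element, and establishing such control of fusion is part of what has to be proved, not an input.

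The endgame also needs tightening. The paper gets $\wt K\cong{}^2\E_6(2).2$ (rather than ${}^2\E_6(2)$) from the order $|\wt{C_H(\pi)}|=2^{10}\cdot 3^9$; your "outer involution visible in $\wt{C_H(\pi)}$" is serviceable but should be phrased as an order comparison. Your use of $\pi\in C_U(\pi)'\le K'$ to rule out $F^*(K)\cong 2\times{}^2\E_6(2)$ is the paper's argument and is fine. However, you still owe an argument that $K$ splits over $F^*(K)\cong 2\udot{}^2\E_6(2)$, i.e.\ that the answer is $(2\udot{}^2\E_6(2)){:}2$ and not a non-split extension; the paper does this by exhibiting an involution of $K\setminus F^*(K)$ via Lemma~\ref{out}, whereas "matching the $3$-local structure against the double cover" does not address this point.
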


We prove these theorems simultaneously through a series of lemmas. Note that by Lemma~\ref{c3},
$\wt{C_H(\pi)}$ is similar to a $3$-centralizer in $\M(22)$ when $H$ is similar to a $3$-centralizer of type
$\M(23)$ and $\wt{C_H(\pi)}$ is similar to a $3$-centralizer of type ${}^2\E_6(2)$ when $H$ is similar to a
$3$-centralizer of type $\F_2$. Therefore our goal in this section will almost be reached once we show that $\wt
Z$ is not weakly closed in $\wt{C_S(\pi)}$ with respect to $\wt{C_G(\pi)}$, for then we will apply the main
theorems of \cite{PSS} to $\wt{K}$.

\begin{lemma} We have $C_S(\pi) \in \Syl_3(K)$.
\end{lemma}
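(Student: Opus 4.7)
\medskip

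\textbf{Plan.} The strategy is to reduce the lemma to the standard coprime Sylow result: if a $p'$-element of a finite group normalises a Sylow $p$-subgroup $P$, then its fixed points on $P$ form a Sylow $p$-subgroup of the centraliser of that element. Accordingly, I would aim to show that the involution $\pi$ normalises the chosen Sylow $3$-subgroup $S$ of $G$, and then invoke this result with $p=3$.

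For the normalisation step, recall from Section~3 that $S = Q(S\cap F)$, where $S\cap F$ is generated (modulo $Z$) by the preimages of $d_1,d_2,d_3,(1,2,3)$ specified in the wreath-product construction. Since $Q\trianglelefteq H$ we immediately have $\pi\in N_G(Q)$. Since $\pi$ acts on the wreath-product base as the transposition $(1,2)$ of tensor factors, a direct generator-level computation yields
\[
d_1^\pi = d_1 d_2^{-1},\qquad d_2^\pi = d_2^{-1},\qquad d_3^\pi = d_3,\qquad (1,2,3)^\pi = (1,2,3)^{-1},
\]
and each of these lies in $S\cap F$. Hence $\pi\in N_F(S\cap F)$, and combined with $\pi\in N_G(Q)$ this shows $\pi\in N_G(S)$.

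By Lemma~\ref{basics}(i) we have $S\in\Syl_3(G)$, so the preceding paragraph says that the $3'$-element $\pi$ normalises the Sylow $3$-subgroup $S$ of $G$. The coprime action Sylow theorem, applied to the action of $\langle\pi\rangle$ on $S$, therefore gives $C_S(\pi)\in\Syl_3(C_G(\pi))=\Syl_3(K)$, as required.

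The only obstacle is the explicit check that $\pi$ normalises $S\cap F$; but once the wreath-product description of $\pi$ from Section~3 is unpacked, this is a short calculation on the four chosen generators, and the remainder of the argument is a textbook invocation of coprime action.
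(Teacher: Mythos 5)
Your reduction to the normalisation of $S$ by $\pi$ is fine (and indeed the paper arranges $\pi \in N_G(S)$), but the key tool you then invoke is not a theorem. The coprime-action Sylow result states: if a group $A$ acts on $G$ with $(|A|,|G|)=1$, then for any $A$-invariant $P\in\Syl_p(G)$ one has $C_P(A)\in\Syl_p(C_G(A))$. The coprimality required is to the order of the \emph{whole} group $G$, not merely to $|P|$; it is what guarantees that all $A$-invariant Sylow $p$-subgroups are conjugate under $C_G(A)$, which is the heart of the proof. Here $\pi$ is an involution of $G$ and $|G|$ is even, so the hypothesis fails, and the statement you use is simply false in that generality. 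A minimal counterexample: take $G=\Sym(5)$, $p=3$, $P=\langle(1\,2\,3)\rangle\in\Syl_3(G)$ and $t=(1\,2)$. Then $t$ normalises $P$ and $C_P(t)=1$, yet $C_G(t)=\langle(1\,2)\rangle\times\Sym(\{3,4,5\})$ contains $(3\,4\,5)$, so $C_P(t)\notin\Syl_3(C_G(t))$. Nothing in your argument rules out the analogous phenomenon for $\pi$ acting on $S$: a priori $C_G(\pi)$ could contain a $3$-subgroup strictly larger than $C_S(\pi)$ lying in Sylow $3$-subgroups of $G$ that are not $C_G(\pi)$-conjugate to $S$.

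The paper closes exactly this gap by a different, local argument: from Lemma~\ref{c3}, $\widetilde{C_H(\pi)}$ is similar to a $3$-centralizer of type $\M(22)$ or ${}^2\E_6(2)$, whence $Z(C_S(\pi))=Z$ and $|C_S(\pi)|=3^9$ is the full $3$-part of $|C_H(\pi)|=|C_K(Z)|$. Consequently $N_K(C_S(\pi))$ normalises $Z(C_S(\pi))=Z$, so $O^2(N_K(C_S(\pi)))\le C_K(Z)$, forcing $C_S(\pi)\in\Syl_3(N_K(C_S(\pi)))$ and therefore $C_S(\pi)\in\Syl_3(K)$ by the usual ``Sylow in its own normaliser'' argument. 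If you want to keep your framework, you must supply a substitute for the false coprime statement; the self-normalising argument via $Z(C_S(\pi))=Z$ is the natural one, and it is where the real content of the lemma lies.
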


\begin{proof}  Since  $\wt {C_{H}(\pi)}$ is similar to a $3$-centralizer of type  $\M(22)$ or ${}^2\E_6(2)$, we have $Z(C_S(\pi)) = Z$.
Therefore $O^2(N_K(C_S(\pi))) \le C_K(Z) $ which then means that $C_{S}(\pi) \in \syl_3(K)$.
\end{proof}

\begin{lemma}\label{wk1} If $Z$ is not weakly closed in $Q$ with respect to $G$, then $Z$ is not weakly
closed in $C_S(\pi)$ with respect to $K$.
\end{lemma}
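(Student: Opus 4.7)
The plan is as follows. Starting from $g \in G$ with $Z \neq Z^g \leq Q$, I will modify $g$ in two stages to produce $k \in K$ satisfying $Z \neq Z^k \leq C_S(\pi)$, which gives the conclusion.

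Stage 1: Arrange $Z^g \leq C_Q(\pi)$. I would show that the $H$-orbit of the subgroup $Z^g$ on subgroups of order 3 in $Q$ meets $C_Q(\pi)$, then replace $g$ by $gh$ for an appropriate $h \in H$. This reduces to checking that every $H/Q$-orbit on $V \setminus \{0\}$, where $V = Q/Z$, has a representative in the six-dimensional subspace $C_V(\pi)$ of Lemma~\ref{c1}(viii). By Lemma~\ref{basics}(iii), the relevant $H/Q$-orbits on nonzero vectors are of types $d_1Q$, $d_2Q$, $d_3Q$, and inspection of Lemma~\ref{c1}(i)--(iii) against (viii) shows each $C_V(d_i)$ meets $C_V(\pi)$. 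For the lift from $V$ to $Q$, I note that if $\pi$ normalizes a subgroup of order $9$ in $Q$ containing $Z$, then $\pi$ centralizes it: an order-$2$ element of $\GL_2(\GF(3))$ which is trivial on a one-dimensional quotient must itself be trivial, since in characteristic $3$ there is no $2$-torsion in the unipotent radical of the Borel.

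Stage 2: Adjust $g$ into $K$. Now $\pi$ centralizes $Z^g$, so $\pi$ lies in $H^g = C_G(Z^g)$ along with $\pi^g$. I would argue that $\pi$ and $\pi^g$ are $H^g$-conjugate. Transport of structure gives $|C_{Q^g}(\pi^g)| = |C_Q(\pi)| = 3^7$, placing $\pi^g$ in the $H^g$-class characterized by Lemma~\ref{invscz3} as the unique class of involutions centralizing $Z^g$ with centralizer of order $3^7$ in $Q^g$. The essential point is to verify $|C_{Q^g}(\pi)| = 3^7$ as well, so that $\pi$ lies in the same $H^g$-class. Granting this, pick $h' \in H^g$ with $(\pi^g)^{h'} = \pi$ and set $k = g(h')^{-1}$: then $\pi^k = \pi$, so $k \in K$, and $Z^k = (Z^g)^{(h')^{-1}} = Z^g \leq C_Q(\pi) \leq C_S(\pi)$, with $Z^k \neq Z$.

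The principal obstacle is Stage 2, specifically ruling out the possibility that $|C_{Q^g}(\pi)| \in \{3^3, 3\}$, the smaller values permitted by Lemma~\ref{invscz3}. The plan here is to exploit that $\pi$ centralizes the elementary abelian subgroup $\langle Z, Z^g \rangle$ of order $9$ inside $Q \cap Q^g$: by Lemma~\ref{cent9} applied inside $H^g$, this bounds the action of $\pi$ on $Q^g$ from below in the right way, forcing $\pi$ into the largest-centralizer class. Once this class identification is in place, the rest of the argument is formal fusion bookkeeping via Lemma~\ref{invscz3}.
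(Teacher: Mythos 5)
There is a genuine gap, concentrated in Stage~2, and the justification offered for Stage~1 misreads the cited lemmas. For Stage~1: Lemma~\ref{basics}(iii) and Lemma~\ref{c1}(i)--(iii) describe conjugacy classes of order-$3$ \emph{elements of $H/Q$} and their fixed spaces on $V=Q/Z$; they say nothing about the $H$-orbits on the \emph{vectors} of $V\setminus\{0\}$, which is what your reduction requires. In the $\F_2$ case there are exactly two such orbits (Lemma~\ref{CR2}(i)) and one can check both meet $C_V(\pi)$, but in the $\M(23)$ case $H$ is a much smaller group, the paper provides no orbit count, and ``every orbit containing a conjugate of $Z$ meets $C_V(\pi)$'' is unsubstantiated — and the lemma must hold in both cases, since it feeds into Lemma~\ref{wk2}. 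For Stage~2, the key claim $|C_{Q^g}(\pi)|=3^7$ is not established: the fact that $\pi$ centralizes $\langle Z,Z^g\rangle$ only gives $|C_{Q^g}(\pi)|\ge 9$, which excludes the class with centralizer of order $3$ but not those with centralizer of order $3^3$ (type $\pi\sigma$, Lemma~\ref{invscz3}) or $3^5$ (type $r\in R^g\setminus Z(R^g)$, Lemma~\ref{CR}(i)); Lemma~\ref{cent9} is an \emph{upper} bound on centralizers of rank-$2$ elementary abelian $3$-subgroups of $H/Q$ and yields no lower bound on $C_{Q^g}(\pi)$. Moreover, Lemma~\ref{invscz3} classifies involutions in $C_{H/Q}(d_3Q)$ for an order-$3$ element \emph{outside} $Q$, whereas here $Z\le Q^g$ (see below), so even granting $|C_{Q^g}(\pi)|=3^7$ you would still need a uniqueness-of-class statement for involutions of $H^g$ with that centralizer order, which the paper does not supply, before you could conjugate $\pi^g$ to $\pi$ inside $H^g$.

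The missing idea is that Stage~2 is unnecessary once you exploit the symmetry $Z\le Q^g$, which is the paper's first (and easy) step: $C_Q(Y)\le C_G(Y)=H^g$ contains an extraspecial subgroup of order $3^7$, and $H^g/Q^g$ has $3$-rank $3$ so contains no such subgroup, forcing $Z\le Q^g$. Then $\Phi(Q\cap Q^g)\le Z\cap Y=1$, so $Q\cap Q^g$ is elementary abelian of order at most $3^5$, whence $(Q^g\cap H)Q/Q$ is elementary abelian of order $3^3$; after a Sylow conjugation, Lemma~\ref{basics}(vii) and (viii) identify $ZY/Z$ with $C_{Q/Z}(Q^g\cap H)$ and show it is centralized by $\pi$ — this replaces your Stage~1 uniformly in both cases. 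Now $\pi\in H^g$ and $C_{Q^g}(\pi)\ge ZY>Y$, so by coprime action $C_{Q^g}(\pi)$ is the full preimage of the nondegenerate subspace $C_{Q^g/Y}(\pi)$ and hence is extraspecial with centre $Y$; likewise $C_Q(\pi)$ is extraspecial with centre $Z$. Each normalizes $ZY$ and induces on it the full group of transvections with centre $Y$, respectively $Z$; together they generate $\SL_2(3)$, transitive on the four cyclic subgroups of $ZY$. Since $\langle C_Q(\pi),C_{Q^g}(\pi)\rangle\le K$ and $ZY\le C_Q(\pi)\le C_S(\pi)$, this already conjugates $Z$ to $Y$ inside $K$ — no identification of the $H^g$-class of $\pi$ is required.
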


\begin{proof} Assume that $g \in G$ and $Y= Z^g \le Q$. Then $C_Q(Y) \le C_G(Y)=H^g$ and $C_Q(Y)$ contains
an extraspecial subgroup of order $3^7$.  Since, by Lemma~\ref{basics} (ii),  $H/Q$ has no extraspecial subgroups of order $3^7$, we have that
$Z \le Q^g$. Now $\Phi(Q^g \cap Q) \le \Phi(Q^g) \cap \Phi(Q)= Z\cap Y=1$ which means that $Q\cap Q^g$ is
elementary abelian. As $Q^g$ is extraspecial of order $3^9$, we get $|Q^g:Q^g \cap Q| \ge 3^4$. Since the
$3$-rank of $H/Q$ is $3$ by Lemma~\ref{basics} (ii), we have $(Q^g\cap H)Q/Q $ is elementary abelian of order $3^3$.
By Sylow's Theorem we may suppose that $(Q^g\cap H)Q \le S $ and  then Lemma~\ref{basics}(vii) gives $$ZY/Z =
C_{Q/Z}(Q^g \cap H)$$ and this group is centralized by $\pi$. Thus $\pi$ centralizes
$Y$ and so $\pi \in H^g$ and, as $C_{Q^g}(\pi) \ge Z$, we have $C_{Q^g}(\pi)$ is extraspecial. Now $\langle
C_{Q}(\pi), C_{Q^g}(\pi)\rangle \le K$ and this group normalizes $ZY$ and acts transitively on the cyclic
subgroups of $ZY$. This means that $Z$ is not weakly closed in $C_S(\pi)$ with respect to $K$ as claimed.
\end{proof}

The following lemma is the main technical result of this section.

\begin{lemma}\label{wk2} $Z$ is not weakly closed in $C_S(\pi)$ with respect to $K$.
\end{lemma}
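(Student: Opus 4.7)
The plan is to bootstrap from Lemma~\ref{wk1} and handle the remaining case directly. If $Z$ is not weakly closed in $Q$ with respect to $G$, then Lemma~\ref{wk1} already delivers the conclusion. So I assume $Z$ is weakly closed in $Q$. Combined with the weak-closure failure in $S$ from the hypothesis of the main theorems, there exists $g \in G$ with $Y := Z^g \le S$, $Y \ne Z$ and, since $Z$ is weakly closed in $Q$, $Y \not\le Q$. Let $y$ generate $Y$. The goal is to exhibit a $K$-conjugate of $Z$ lying in $C_S(\pi)\setminus\{Z\}$, by progressively normalising the configuration $(Y,\pi)$.

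The first substantive step is to identify the $H/Q$-conjugacy class of $yQ$. By Lemma~\ref{basics}(iii), $yQ$ is conjugate to one of $d_1Q$, $d_2Q$, or $d_3Q$. Because $C_G(Y)=H^g$ has $Q^g:=O_3(H^g)$ extraspecial of order $3^{1+8}$ with centre $Y$, the subgroup $C_Q(y)$ lies in $H^g$; its intersection with $Q^g$ lies inside an abelian subgroup of $Q^g$ of order at most $3^5$, while its image in $H^g/Q^g$ has $3$-rank at most $3$ by Lemma~\ref{basics}(ii). Comparing these embedding constraints with the explicit structure of $C_Q(y)$ for each type from Lemma~\ref{basics}(iv)--(vi), I expect to force $yQ$ to be of the $3$-central type $d_3Q$: the non-abelian shape of $C_Q(y)$ in the $d_2Q$ case is incompatible with this embedding, and the $d_1Q$ case is excluded using the quadratic action together with $[Q,y]=C_Q(y)$.

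Next, since $Z\le Z(H)$ is fixed under $H$-conjugation, I may replace $g$ by $gh$ with $h\in H$ to arrange $yQ = d_3 Q$. By the construction in Section~3, the lift $d_3\in F$ satisfies $[d_3,\pi]=1$. Writing $y=d_3 q$ with $q\in Q$ gives $[y,\pi]=[q,\pi]$, so $Y\le C_S(\pi)$ is equivalent to $q\in C_Q(\pi)$. Since $|Q:C_Q(\pi)|=9$ is small compared with the size of the $H$-orbit of $y$ in the fiber $d_3 Q$, and using the explicit action of $\pi$ on centralizers of $d_3$-type elements described in Lemma~\ref{piact}, an orbit-counting argument applied to the action of $C_H(\langle d_3 Q\rangle)$ on the coset $d_3 Q$ should produce a further $H$-conjugate of $y$ lying in $C_S(\pi)$. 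At this point $Y\le C_S(\pi)$ is a $G$-conjugate of $Z$ with $Y\ne Z$.

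Finally I upgrade this to a $K$-conjugacy. Writing $Y=Z^{g_0}$, the involution $\pi^{g_0^{-1}}$ lies in $H=C_G(Z)$ and is $G$-conjugate to $\pi$. Provided this involution lies in the coset $O_{3,2}(H)\pi$, Lemma~\ref{basics}(x) shows it is $H$-conjugate to either $\pi$ or $\sigma\pi$; Lemma~\ref{invscz3} then gives $|C_Q(\pi)|=3^7\ne 3^3=|C_Q(\sigma\pi)|$, ruling out $G$-conjugacy of $\pi$ and $\sigma\pi$, and forcing $\pi^{g_0^{-1}}\sim_H\pi$. Composing the resulting $H$-conjugation with $g_0$ yields $k\in K$ with $Z^k=Y\ne Z$, completing the proof. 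The main obstacles are the type identification in the second paragraph and verifying the coset location of $\pi^{g_0^{-1}}$ in the last step; both rest on the detailed structural descriptions of $H$ (and its $G$-conjugate $H^g$) established in Sections~3 and~4, and may require a supplementary local analysis to identify the $H$-class of $\pi^{g_0^{-1}}$ using the configuration of extraspecial groups $Q$ and $Q^g$.
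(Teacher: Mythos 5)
Your overall architecture (reduce via Lemma~\ref{wk1} to the case where $Z$ is weakly closed in $Q$, then try to manufacture a $K$-conjugate of $Z$ inside $C_S(\pi)$ directly) is reasonable, but the proposal breaks at its linchpin: the claim in your second paragraph that the embedding of $C_Q(y)$ into $H^g$ forces $yQ$ to have type $d_3Q$. No such forcing follows from Lemma~\ref{basics}(ii)--(vi). For $y$ of type $d_1Q$ the subgroup $C_Q(y)=[Q,y]$ is elementary abelian of order $3^5$, and nothing in the embedding constraints prevents it from meeting $Q^g$ in a subgroup of order $3^4$ while projecting onto a group of order $3$ in $H^g/Q^g$; this is exactly the configuration the paper is ultimately forced to confront. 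The paper's argument is a proof by contradiction in which the hypothesis ``$Z$ is weakly closed in $C_S(\pi)$ with respect to $K$'' is itself used repeatedly: to get $C_Q(Y)/Z=C_{Q/Z}(Y)$ (via weak closure of $Y$ in $Q^g$), to show $\pi$ and $\pi^g$ are not $H$- or $H^g$-conjugate, and thence to rule out type $d_2Q$ and the configuration where both $Y$ and $Z$ act as type $d_3$. The surviving case, $Y$ of type $d_1Q$, is eliminated only by a counting argument: $W=[Q,Y]Y$ is elementary abelian of order $3^6$ and normalized by $I=\langle Q,Q^g\rangle$, the two weak-closure facts pin the number of $I$-conjugates of $Z$ in $W$ to $82$ or $163$, and neither $41$ nor $163$ divides $|\GL_6(3)|$. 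None of this is recoverable from ``embedding constraints,'' and a direct (non-contradiction) route of the kind you sketch would have to supply a substitute for all of it.

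Two further steps are also unsound as written. The orbit-counting heuristic ($|Q:C_Q(\pi)|=9$ is ``small'') does not by itself yield an $H$-conjugate of $Y$ centralized by $\pi$; the corresponding step in the paper is a page-long case division needing $C_Q([Q,Y])=C_Q(Y)$, Lemma~\ref{perp}, and a separate (and delicate) treatment of the case $[\pi,Y]\not\le QR$, one branch of which only closes using information about $Q\cap Q^g$. Finally, in your last paragraph the inference that $\pi$ and $\sigma\pi$ cannot be $G$-conjugate because $|C_Q(\pi)|=3^7\ne 3^3=|C_Q(\sigma\pi)|$ is invalid: $Q$ is not normal in $G$, so centralizer orders in $Q$ are not invariants of $G$-classes. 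Deciding whether $\pi^{g^{-1}}$ is $H$-conjugate to $\pi$ or to $\sigma\pi$ is precisely the point the paper extracts \emph{from} the weak-closure hypothesis (in the contrapositive direction you want), not something available for free from local data. As it stands the proposal is not a proof.
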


\begin{proof} Assume that $Z$ is weakly closed in $C_S(\pi)$ with respect to $K$.
By Lemma~\ref{wk1}, $Z$ is weakly closed in $Q$ with respect to $G$. Since $Z$ is not weakly closed in $S$
with respect to $G$, there exists $g \in G$ and $Y=Z^g$ such that   $Y \le S$ and $Y \not\le Q$.  By Lemma
\ref{el9} we may additionally assume that $Y\le DQ\le S$. Note that $Y$ is weakly closed in $Q^g$. As $Z$ and $Y$ commute,  $Z \le C_G(Y)= H^g$,  $Z \not \le Q^g$ and we may
assume that $Z \le (DQ)^g$.

\begin{claim} \label{cl1} The following hold:
 \begin{enumerate}
\item $Q\cap Q^g$ is elementary abelian;
\item $|C_Q(Y)| \ge 3^3$ and $|C_{Q^g}(Z)|\ge 3^3$;
\item $Q\cap Q^g \neq 1$;
\item $Q\cap Q^g = C_Q(Y) \cap C_{Q^g}(Z);$
\item $|Q\cap Q^g\cap [Q,\pi]| \le 3$; and
\item $[Q,Y]\cap Q^g \neq 1$.
\end{enumerate}
\end{claim}
\bigskip

We have $$\Phi(Q\cap Q^g) \le \Phi(Q) \cap \Phi(Q^g) \le Z \cap Y=1$$ and so (i) holds.

 By
Lemma~\ref{basics}(iii), $YQ/Q$ acts as a $U$-conjugate of $\langle d_1\rangle Q$, $\langle d_2\rangle Q$, or $\langle d_3\rangle Q$ on $Q/Z$ and $Z$ acts in
a similar way on $Q^g/Y$ (though perhaps not as the same type of element). Hence part (ii) comes from
Lemma~\ref{basics}(iv), (v) and (vi) as $C_Q(Y)$ has index at most $3$ in the preimage of $C_{Q/Z}(Y)$.

 Assume  that $Q \cap Q^g=1$. Then $|C_Q(Y) Q^g/Q^g| \ge 3^3$ by (ii).   Since  $C_Q(Y)\cap Q^g=1$ and $C_Q(Y)Q^g/Q^g$ has exponent $3$, we have that $|C_Q(Y)|= 3^3$ and $|C_{Q/Z}(Y)|=3^3$ by Lemma~\ref{basics} (ii), (iii) and (iv). In particular, Lemma~\ref{basics} (vi) implies that $C_Q(Y)$ is elementary abelian.
  Now Lemma~\ref{basics} (vii) implies that $C_{Q^g/Y}(C_Q(Y))$ has order
$3$. Since $$[C_Q(Y), C_{Q^g}(Z)]\le Q \cap Q^g=1,$$ we have a contradiction as $C_{Q^g}(Z)$ has order at
least $3^3$ by (ii). Hence (iii) holds.

As  $[Q\cap Q^g,ZY]= 1$,  $$Q\cap Q^g \ge C_Q(Y) \cap C_{Q^g}(Z)\ge Q\cap Q^g.$$ This is (iv).

Part (v) follows as $[Q,\pi]\cong 3^{1+2}_+$ by Lemma~\ref{basics} (xi) and $Q\cap Q^g$ is elementary abelian by (i) and does not contain $Z$.

 Finally, suppose that $[Q,Y]\cap Q^g=1$. Then, as $C_Q(Y)Z/Z \le  C_{Q/Z}(Y)$, part (iv) implies that
$C_{Q/Z}(Y) \not \le [Q,Y]/Z$. Hence Lemma~\ref{basics} (iv), (v) and (vi)  imply  $YQ/Q$ is a $U$-conjugate of $\langle d_2\rangle Q$. Let $E$
be the preimage of $C_{Q/Z}(Y) \cap [Q,Y]/Z$. Since $E$ is the centre of the preimage of $C_{Q/Z}(Y)$,
Lemma~\ref{basics} (v) implies that $|E|= 3^3$. It follows from (i) and (iv) that  $E(Q\cap Q^g)$ is abelian and thus we
have $|Q\cap Q^g|= 3$ as the preimage of $C_{Q/Z}(Y)$ is non-abelian. Using (ii) and (iii), we now have
$|C_{Q^g}(Z)Q/Q|\ge 3^2$. Set $E_0= E \cap C_Q(Y)$ and note that $E_0\le E \le [Q,Y]$ and is $C_{Q^g}(Z)$-invariant. Therefore  $$[E_0,C_{Q^g}(Z)] \le [Q,Y]\cap Q^g=1.$$  If $E_0 = E$,
then $C_{Q/Z}(C_{Q^g}(Z)) \ge E(Q\cap Q^g)/Z$ has order at least $3^3$ and this contradicts Lemma~\ref{cent9}.
So  $E_0$ has index $3$ in $E$ and $C_{Q/Z}(Y) = EC_{Q}(Y)/Z$. Since $Q\cap Q^g$ is centralized by $C_Q(Y)$, we
now have $E(Q\cap Q^g)$ is centralized by $C_Q(Y)$ and this implies that the preimage of $C_{Q/Z}(Y)$ is abelian,
which is a contradiction. Thus (vi) holds. \qedc

\bigskip
\begin{claim}\label{cl2}  The following hold.
\begin{enumerate}
\item Every element of $[Q,Y]Y\setminus [Q,Y]$ is $Q$-conjugate to an element of $YZ \setminus Z$; and
\item $C_{Q}(Y)/Z= C_{Q/Z}(Y)$.
\end{enumerate}
\end{claim}
\bigskip

Clearly $Q$ normalizes both $[Q,Y]Y$ and $[Q,Y]$. Thus $Q$ permutes the elements of $[Q,Y]Y\setminus [Q,Y]$. Since
$$|(YZ/Z)^Q|=|Q/Z:C_{Q/Z}(Y)|= |[Q/Z,Y]|= |[Q,Y]/Z|,$$ the claim in (i) follows. Now, if $C_Q(Y)/Z<C_{Q/Z}(Y)$, then
the preceding calculation shows that all the elements of
$$[Q,Y]Y\setminus [Q,Y]$$ are conjugate to elements of $Y$. But then  because of \ref{cl1}(vi) we have $([Q,Y]\cap Q^g)Y$
contains conjugates of $Y$ other than $Y$ and this contradicts $Y$ being weakly closed in $Q^g$. \qedc

\bigskip
\begin{claim}\label{cl3} After conjugating $Y$ by a suitable element of $H$ we may assume  $Y \le K$.
\end{claim}
\bigskip

We first show that if $[\pi, Y] \leq QR$ then there is a $QR$-conjugate of $Y$, which is centralized by $\pi$. Under this assumption,
Lemma \ref{basics}(x) shows that we  may assume that $[Y,\pi] \leq Q$. We consider the subgroup $\langle \pi\rangle QY$. We
have that $\pi$ normalizes $[Q,QY]=[Q,Y]$.  Using Lemma~\ref{perp} and \ref{cl2} we have $C_Q([Q,Y]) =
C_Q(Y)$. So $\pi$ normalizes $C_Q(Y)$. Thus $\pi$ normalizes $C_{QY}(C_Q(Y)) =[Q,Y]Y$. By \ref{cl2} every
element of $[Q,Y]Y\setminus [Q,Y]$ is $Q$-conjugate to an element of $ZY$. Thus  some $Q$-conjugate of $ZY$ is
normalized by $\pi$. Hence we may assume that $ZY$ is normalized by $\pi$. Since $\pi$ centralizes $YQ/Q$ and
$Z$, $\pi$ centralizes $ZY$ and thus $\pi$ centralizes $Y$.

 So we may assume that $[\pi, Y] \not\leq QR$.  Since by Lemma~\ref{el9} every element of order $3$ in
$H/QR$ is conjugate into $DQR/QR$, we may assume that $YQR/QR \leq DQR/QR$.

 We have that $C_{DQ/Q}(\pi)= \langle d_1d_1^\pi,d_3\rangle Q/Q$ has order $3^2$. Set $T_0 =TQR$. Then $N_{H}(T_0)/T_0 \cong \SL_2(3)$.  The action of $N_H(T_0)$ on $T_0/QR$ shows that
elements of order three in $DQ/Q$ are either  $T_0Q/Q$-conjugate   into  $C_{DQ/Q}(\pi)$ or are contained in
$DQ/Q \cap TQ/Q$. Since the former possibility has been eliminated,  we may assume that $YQ/Q \leq (D\cap T)Q/Q$. Now using the conjugation action of $T_0/QR$ we may assume that
$YQR/QR$ is inverted by $\pi$. We will show that this is impossible.

By Lemma~\ref{basics}(viii) we may additionally assume that $YQ/Q$ acts as  $\langle d_2Q \rangle$ on $Q/Z$. Set $x = d_1d_1^\pi$. As  $|C_R(d_2)| = 2^3$, $|C_R(d_3)| = 2$ and
$|C_R(d_1)| = 2^5$, the elements $d_i$, $1\le i\le 3$, can be distinguished by their action on $R$. Hence we get that $|C_R(d_1d_1^\pi)| = 2^3$ and so $d_1d_1^\pi $ is conjugate in $U$ to $d_2$. Suppose that $H =
U$. Then we may conjugate $Y$ in $H$ to $d_1d_1^\pi$ and hence we may choose $Y$ with $[Y,\pi] \leq QR$ right from
the beginning. However we have already considered this case.

So for the final part of the proof of \ref{cl3} we may assume that $H/Z \cong  M/Z$. Then Lemma~\ref{basics}(v) and \ref{cl2} show that $C_Q(Y)$
(recall $Y$ is of type $d_2$) is a direct product of an elementary abelian group of order $3^2$ with an
extraspecial group of order $3^3$. In particular $C_Q(Y)^\prime = Z$. As $Z \not \le Q^g$,  Lemma~\ref{basics} (vi) shows that $C_Q(Y)Q^g/Q^g$ is extraspecial of order $3^3$ and so $Q \cap Q^g$ is elementary abelian of
order $3^2$. This yields $[\pi^g,Z] \leq Q^gR^g$. By the argument above there is a $Q^gR^g$-conjugate of
$\pi^g$, which is centralized by $Z$. So we may assume that $[Z,\pi^g] = 1$. In particular $\pi^g \in H$. Hence
$\pi^g \in \langle R, \pi \rangle$ as $H/Z \cong M/Z$. However $[\pi^g, Y] = 1$, which implies $\pi^g \in C_R(Y) \cong
\Q_8$.

So we have that $\pi^g$ acts as an element of $Z(R)$ on $Q$, in particular $C_Q(\pi^g) = Z$. As $\pi^g$ acts on $Q \cap Q^g$,
we now get that this group is inverted by $\pi^g$.  Since  $|Q\cap Q^g|=3^2$, this contradicts \ref{cl1} (v) and proves the claim.
  \qedc

As  $Z \not \le Q^g$ and $Z\le H^g$, we may apply \ref{cca} to this configuration. Notice that when we adjust $Z$, we also adjust $\pi$ and therefore we may still assume $Z \le K$.

\begin{claim}\label{cca} After conjugating $Z$ by an element of $H^g$, we may assume that $[Z,\pi^g]=1$.
\end{claim}\qedc

We now have $\langle \pi, \pi^g \rangle \le H \cap  H^g$ and we may assume that $\langle \pi ,\pi^g\rangle$ is a $2$-group.

\bigskip
\begin{claim}\label{cl4}  $\pi$ and $\pi^g$ are neither  $H^g$-conjugate nor  $H$-conjugate.
\end{claim}
\bigskip

 If $\pi^{gh}= \pi$  for some $h \in H^g$, then $Z^{gh}= Y^h=
Y$ which means that $Z$ is not weakly closed in $C_S(\pi) $ with respect to $K$. This contradicts our initial assumption  that $Z$ is   weakly closed in $C_S(\pi)$ with respect to $K$.
Therefore $\pi$ and $\pi^g$ are not $H^g$-conjugate.

Similarly, suppose that $\pi $ and $\pi^g$ are $H$-conjugate. Then there exists $t\in H$ such that $\pi^t = \pi^g$. Let $s\in G$ be such that $gs= t$. Then $\pi^{gs}= \pi^g$ and so $s \in C_G(\pi^g) $. However, $Y^{s} = Z^{gs} = Z^t = Z$.
 This shows that $Y$ is not weakly closed in  a Sylow$3$-subgroup of $G_G(\pi^g)$ with respect to $C_G(\pi^g)$.
 \qedc

\begin{claim}\label{cl6} $C_Q(Y)$ and $C_{Q^g}(Z)$ are  abelian.
\end{claim}
\bigskip

Assume  that $C_{Q}(Y)$ is non-abelian. Then $Y$ does not act quadratically on $Q$ and $|C_Q(Y)|=3^5$ by
Lemma~\ref{basics} (iv), (v) and (vi) and \ref{cl2} (ii). Since $C_Q(Y)'=Z$, $C_Q(Y)$ has exponent $3$ and $Z \not \le Q^g$,
$C_Q(Y)Q^g/Q^g$ is extraspecial of order $3^3$ by Lemma~\ref{basics} (viii). Thus $|C_Q(Y) \cap Q^g| = 3^2$. Further $Z$ acts as an element of
type $d_3$ on $Q^g$.  So by Lemma~\ref{basics} (vi) and \ref{cl2}(ii) $C_{Q^g}(Z)$ is elementary abelian of order
$3^4$.  By Lemma~\ref{basics} (vii) we have  $|C_{Q^g/Y}(C_Q(Y))|=3^2$ and by Lemma~\ref{piact} (iii)
$C_{Q^g}(C_Q(Y))/Y = (Q\cap Q^g)Y/Y$ is centralized by $\pi^g$.
 By \ref{cl4}, $\pi$ is not $H^g$-conjugate to $\pi^g$. Thus, using  Lemma~\ref{invscz3}, gives $C_{Q^g}(\langle \pi, \pi^g \rangle) = Y$ and so $(Q\cap Q^g)Y/Y$
is inverted by $\pi$. As $|Q\cap Q^g| = 3^2$ this again contradicts \ref{cl1} (v). Hence  $C_Q(Y)$ is elementary abelian.  A symmetric argument shows that $C_Q(Z)$ is elementary abelian.\qedc

%Suppose now that $C_{Q^g}(Z)$ is non-abelian.  Then the non-trivial elements of $Z$  act as $d_2$-elements on $Q^g$.
%Thus, just as above, we get that $|Q\cap Q^g|=3^2$ and that $C_{Q^g}(Z)Q/Q$ is extraspecial with derived group $YQ/Q$. Now we have $C_Q(Y)$ has order $3^4$ and is elementary abelian and also $|C_Q(Y)Q^g/Q^g|= 3^2$. This time we get $[Q\cap Q^g,\pi]=1$ by Lemma~\ref{piact} (iii). Since $[C_Q(Y),\pi]\neq 1$, we have that $\pi \not \in R^gQ^g$. As $C_{Q^g}(\pi) \ge Q\cap Q^g$  and $[\pi,Y]=1$, we infer that $C_{Q^g}(\pi)$ is extraspecial of order at least $3^5$. It follows from Lemma~\ref{piact} that $\pi$ is not $H^g$-conjugate to an element of $\pi^g R^gQ^g$. Thus $H$ is a $3$-centralizer of  type $\F_2$ and $C_{H^g/R^gQ^g}(\pi R^gQ^g)$ has Sylow $3$-subgroups of order $3$ with non-trivial elements which act as elements of type $ d_1$ on $Q^g$. However, $\pi$ commutes with $Z$ and its non-trivial elements act as elements of type $d_2$ on $Q^g$. This contradiction shows that $C_{Q^g}(Z)$ is also abelian.
%\qedc
%
%
%

\bigskip
\begin{claim}\label{cl5} At least one of the following hold.\begin{enumerate}
\item  $YQ/Q$ does not act fixed point freely on $RQ/Z(R)Q$; or
\item $ZQ^g/Q^g$
does not act fixed point freely on $R^gQ^g/Z(R^g)Q^g$.\end{enumerate}
\end{claim}
\bigskip

Suppose that (i) and (ii) both fail. Then the non-trivial elements of $Z$ and $Y$  act as  elements of type $d_3$ on $Q^g$ and $Q$ respectively. In particular  $C_Q(Y)$
and $C_{Q^g}(Z)$ are both elementary abelian of order $3^4$ by Lemma~\ref{basics} (vi) and \ref{cl2} (ii). By
Lemmas~\ref{basics} (vii) and \ref{el9}, we may suppose that $C_{Q^g}(Z)Q/Q  \le DQ/Q$.

By Lemma~\ref{invscz3},  $N_{H^g/Q^g}(Z)$ has three conjugacy classes of involutions and they are distinguished by their centralizer on $Q^g$.
Since $\pi$ is not $H^g$-conjugate to $\pi^g$ by \ref{cl4},  Lemma~\ref{basics}(ix) yields  $\pi$ either inverts $Q^g/Y$ or centralizes a subgroup of $Q^g/Y$ of order $3^2$.

 If $\pi$ inverts $Q^g/Y$, then $|C_{Q^g}(Z)Q/Q| \le 3^2$ as
$[DQ/Q,\pi]$ has order $3$ and $C_{Q^g}(Z)Q/Q \ge YQ/Q$. Thus $|C_{Q^g}(Z) \cap Q| \ge 3^2$ and so $|Q \cap Q^g|
\ge 3^2$ with $[Q\cap Q^g,\pi]= Q\cap Q^g$ which is against \ref{cl1} (v). Therefore $\pi$ does not invert $Q^g/Y$ and so $C_{Q^g}(\pi)$ is extraspecial of order $3^3$.
%
%
% As $|C_Q(\pi)| = 3^7$ and $Y$ is a $d_3$-element in $H$ which is centralized by $\pi$,
%Lemma~\ref{piact}(i) shows $|[C_Q(Y),\pi]|=3$. So we have $C_{Q \cap Q^g}(\pi )\neq 1$ as. This is impossible as
%we have assumed $\pi$ inverts $Q^g/Y$. This contradiction shows that  $\pi$ cannot invert $Q^g/Y$.
Then, as  $C_{Q^g}(Z)$ is abelian and $Z$ acts
on $C_{Q^g}(\pi)$, we get $|C_{Q^g}(\pi) \cap C_{Q^g}(Z)| = 3^2$. Thus $|[C_{Q^g}(Z),\pi]|= 3^2$. Hence, as
$|[C_{Q^g}(Z)Q/Q,\pi]|\le 3$, we obtain $|[Q\cap Q^g,\pi]|\ge 3$. Since $|[C_{Q}(Y),\pi]|=3$ as above, we infer
that  $$|[C_{Q}(Y),\pi]|=| [Q\cap Q^g,\pi]|= |[C_{Q^g}(Z)Q/Q,\pi]|=3.$$ It now follows that $C_Q(Y)Q^g/Q^g$ is
centralized by $\pi$ and this means that $|Q\cap Q^g|\ge 3^2$. Therefore $C_{Q^g}(Z) Q/Q= [C_{Q^g}(Z),\pi]YQ/Q$
and $|Q\cap Q^g| = 3^2$. As the non-trivial elements of $Y$ are of type $d_3$, we may apply Lemma~\ref{piact}(ii).   This shows that
$C_{Q}(C_{Q^g}(Z))$ is centralized by $\pi$. But $Q \cap Q^g \leq C_{Q}(C_{Q^g}(Z))$ and is not centralized by
$\pi$. This contradiction proves \ref{cl5}.\qedc
\bigskip

Finally, we can complete the proof of Lemma~\ref{wk2}. By \ref{cl6} and \ref{cl5} we may assume that $YQ/Q$ does
not act fixed point freely on $RQ/Z(R)Q$ and $C_Q(Y)$ is elementary abelian.
In particular, the non-trivial elements of $Y$ act on $Q/Z$ as elements  of type $d_1$  by Lemma~\ref{basics} (vi), (v) and (vi). Therefore,
by Lemma~\ref{basics}  (iv) and \ref{cl2},  $|C_Q(Y)|=3^5$ and $C_Q(Y) = [Q,Y]$.

Thus  $|C_Q(Y)Q^g/Q^g|\le 3^3$ and $|C_Q(Y)\cap Q^g|\ge 3^2$. If  $|C_Q(Y)Q^g/Q^g| = 3^3$ then by
Lemma~\ref{basics}(vii) we have that $|C_{Q^g/Y}(C_Q(Y))| = 3$, contradicting $|C_Q(Y) \cap Q^g| \ge 3^2$. Hence
we have that $|C_Q(Y)Q^g/Q^g|\le 3^2$ and $|C_Q(Y)\cap Q^g|\ge 3^3$. If   $|C_Q(Y)Q^g/Q^g| = 3^2$, then
Lemma~\ref{cent9} implies  $|C_{Q^g/Y}(C_Q(Y))| \le 3^2$, which is again a contradiction. Thus  $C_Q(Y)Q^g/Q^g =
ZQ^g/Q^g$ has order $3$ and  $|Q\cap Q^g|=  3^4$.
 In particular, $C_{Q^g}(Z)$ has order $3^5$ and so by Lemma~\ref{basics}  $Z$  also acts as an element of type $d_1$ on $Q^g$. Hence
 $C_{Q^g}(Z) = [Q^g,Z]$.

Putting $$W=(Q\cap Q^g)YZ=[Q,Y]Y=[Q^g,Z]Z,$$ we have $W$ is normalized by $I=\langle Q, Q^g\rangle$ and we consider
the action of $I$ on $W$.  We remark that $W$ is elementary abelian of order $3^6$. Since $Q^g$ does not centralize $Z$ we have $|Z^I| \neq 1$. In addition, by hypothesis
$(Q\cap Q^g)Z=C_Q(Y)=[Q,Y]$ contains  exactly one conjugate of $Z$ (namely $Z$) and $(Q\cap Q^g)YZ \setminus
[Q,Y]$ contains a multiple of 81 conjugates of $Z$ forming orbits under the action of $Q$. Since $(Q\cap Q^g)Y$
contains only one conjugate of $Z$, we infer that $|Z^I|= 82$ or $163$. Since neither $41$ nor $163$ divides
$|\GL_6(3)|$, we have a contradiction. This completes the proof of the lemma.
\end{proof}

We now prove the main theorems of this section.

\begin{proof}[The proof of Theorem~\ref{tpi1}] Suppose that $H $ is similar to a $3$-centralizer in $\M(23)$.
Then  Lemma~\ref{c3} (ii) says that  $C_H(\pi)/\langle \pi\rangle$ is
similar to a $3$-centralizer in $\M(22)$.  Furthermore, by Lemma~\ref{wk2}, $\wt Z$ is not weakly
closed in $\wt {C_{S}(\pi)}$ with respect to $\wt K$. Therefore \cite[Theorem 1]{PSS} implies that $F^*(\wt K)$
is isomorphic to $\M(22)$. Since,  by Lemma~\ref{c3} (ii), $|C_H(\pi)|= 2^8\cdot 3^9$, we have  $\wt K \cong \M(22)$.
\end{proof}

\begin{proof}[The proof of Theorem~\ref{tpi2}] Suppose that $H $ is similar to a $3$-centralizer in $\F_2$.
Then because of Lemmas~\ref{c3} (i) and \ref{wk2}, we may apply \cite[Theorem 2]{PSS} to get that $\wt K \cong
{}^2\E_6(2)$ or ${}^2\E_6(2).2$. Since Lemma~\ref{c3} (i) also states that $|\wt{C_{H}(\pi)}|=2^{10}\cdot 3^9$,
 we have $\wt K  \cong {}^2\E_6(2).2$. By Lemma~\ref{c3}, we have $\pi \in C_H(\pi)'$ and therefore, $F^*(K)
\cong 2\udot {}^2\E_6(2)$. Now $C_K(Z)$ contains  $C_{X^*}(\pi)$ from Section 3 and Lemma~\ref{out} implies there
is an involution in $K$ but not in $F^*(K)$. Hence $K \cong (2\udot{}^2 \E_6(2)){:}2$ as claimed.
\end{proof}

\begin{proof}[The proof of Theorem~\ref{Main1}] Suppose that  $C_G(Z)$ is similar to a $3$-centralizer in $\M(23)$.
Then $ \wt K \cong \M(22)$ by Theorem~\ref{tpi1}. To complete the proof that $G \cong \M(23)$, we need to show
that $\pi$ is not weakly closed in $K$ with respect to $G$. However, $\pi$ is not a $2$-central element in $H$
and so the required property is self evident.

\end{proof}

\section{A further $3$-centralizer}

Having proved Theorem~\ref{Main1}, for the remainder of the paper we suppose that $H$ is similar to a $3$-centralizer in $\F_2$.  In particular, $H \cong U$ where $U$ is as defined at the end of Section~3.  We continue the notation
established in the previous sections. In particular, $S\in \syl_3(G)$ is normalized by $\pi$ and  $K= C_G(\pi) \cong (2\udot{}^2\E_6(2)){:}2$ with $C_S(\pi)\in \Syl_3(K)$.

By Proposition~\ref{2e62facts} (iv), there exists  $\rho \in C_Q(\pi)\le K$  such that $$C_{\wt K}(\wt \rho) \cong 3 \times \PSU_6(2).2$$ and
$C_{C_S(\pi)}(\rho) \in \Syl_3(C_{K}(\rho))$.   Let $E_\rho= E(C_K(\rho))$.
Then $E_\rho \langle \pi \rangle /\langle \pi \rangle \cong \PSU_6(2)$ and $C_{\wt E_\rho}(\wt Z)$ has shape $3^{1+4}_+.(\Q_8\times
\Q_8).3$.
%In particular, there is an element $r_* \in C_E(Z)$ of order $2$ which is contained in $QR$.

The only lemma in this section
is as follows.

\begin{lemma}\label{Crho}  $N_G(\langle \rho\rangle) \cong \Sym(3) \times \Aut(\M(22))$.
\end{lemma}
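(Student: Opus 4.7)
The strategy is to apply an identification theorem from \cite{PSS} to the quotient $\bar G := C_G(\rho)/\langle \rho \rangle$, and then assemble the structure of $N_G(\langle \rho \rangle)$ from this information. Let $\bar H := C_H(\rho)/\langle \rho \rangle$ and $\bar Z := Z\langle \rho\rangle/\langle \rho\rangle$. By Lemma~\ref{CR2}(ii), $\bar H$ is similar to a $3$-centralizer in $\M(22)$ of order $2^8\cdot 3^9$, and $\bar Z = Z(F^*(\bar H))$. First I verify $C_{\bar G}(\bar Z) = \bar H$; this amounts to $C_G(Z\langle\rho\rangle) = C_H(\rho)$, which follows from $C_G(Z) = H$ together with the observation that within $Z\langle\rho\rangle\cong 3^2$ the subgroup $Z$ is characterised as the set of $3$-central elements of $G$ (since $Z = Z(S)$, whereas $\rho \in Q\setminus Z$ is not centralised by all of $S$).

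The main technical obstacle is to verify that $\bar Z$ is not weakly closed in a Sylow $3$-subgroup of $\bar H$ with respect to $\bar G$. For this I would work inside $C_K(\rho) \le C_G(\rho)$. Since $E_\rho \langle \pi \rangle / \langle \pi \rangle \cong \PSU_6(2)$ and $C_{\wt{E_\rho}}(\wt Z)$ has shape $3^{1+4}_+.(\Q_8 \times \Q_8).3$, inside $E_\rho\langle\pi\rangle/\langle\pi\rangle\cong \PSU_6(2)$ the image of $Z$ is a $3$-central element whose normalizer contains an extraspecial subgroup $3^{1+4}_+$ with $\wt Z$ at its centre. Standard $3$-local information on $\PSU_6(2)$ (see \cite{Atlas}) shows that the $3$-central conjugacy class of $\PSU_6(2)$ has representatives inside this extraspecial subgroup which lie outside its centre; lifting such a conjugator back to $C_G(\rho)$, and passing to $\bar G$, produces a $\bar G$-conjugate of $\bar Z$ inside the Sylow $3$-subgroup of $\bar H$ coming from $E_\rho$ which is distinct from $\bar Z$.

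With both hypotheses in hand, applying \cite[Theorem 2]{PSS} to the triple $(\bar G, \bar H, \bar Z)$ yields $F^*(\bar G) \cong \M(22)$. To promote $\bar G$ to $\Aut(\M(22))$, I use the image of $C_K(\rho)$ in $\bar G$: since $C_K(\rho)/\langle\pi\rangle \cong 3 \times \PSU_6(2).2$, this image realises a subgroup of shape $(2\udot\PSU_6(2)).2$ as the centraliser of the involution $\pi\langle\rho\rangle$ in $\bar G$. Because the centraliser of the corresponding involution inside $\M(22)$ itself is only $2\udot\PSU_6(2)$ (with no outer $.2$-factor), we must have $\bar G \cong \M(22){.}2 = \Aut(\M(22))$.

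Finally, the splitting of $C_H(\rho)$ over $\langle\rho\rangle$ from Lemma~\ref{CR2}(ii), combined with the vanishing of the $3$-part of the Schur multiplier of $\Aut(\M(22))$, forces $C_G(\rho) = \langle\rho\rangle\times L$ with $L\cong \Aut(\M(22))$. An involution $t$ inverting $\rho$ in $G$ is obtained by lifting an inverting involution in $\wt K$ guaranteed by Proposition~\ref{2e62facts}(ii). Since $\Out(\Aut(\M(22))) = 1$, the action of $t$ on $L$ is inner; after adjusting $t$ by a suitable element of $L$ we obtain $t$ centralising $L$, and then $N_G(\langle\rho\rangle) = \langle\rho, t\rangle\times L \cong \Sym(3)\times \Aut(\M(22))$. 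The principal difficulty is the weak-closure verification described in the second paragraph.
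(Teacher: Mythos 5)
Your proposal follows essentially the same route as the paper: pass to $\bar G=C_G(\rho)/\langle\rho\rangle$, recognise $C_H(\rho)/\langle\rho\rangle$ as a $3$-centralizer of type $\M(22)$ via Lemma~\ref{CR2}(ii), verify the weak closure hypothesis inside $C_K(\rho)$ using the known $3$-fusion of $\PSU_6(2)$ (the paper states this in one line, you unpack it slightly), apply \cite[Theorem 2]{PSS}, and then separate $\M(22)$ from $\Aut(\M(22))$ by an order count --- you use the order of $C_{\bar G}(\bar\pi)$ where the paper uses $|C_H(\rho)|=2^8\cdot 3^{10}$; both work.

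The one step I would not accept as written is the splitting of $C_G(\rho)$ over $\langle\rho\rangle$. You appeal to ``the vanishing of the $3$-part of the Schur multiplier of $\Aut(\M(22))$'', but the Schur multiplier of $\M(22)=\Fi_{22}$ is $\mathbb{Z}/6$, so the $3$-part is emphatically nonzero; to make your argument go through you would additionally have to show that the outer automorphism of $\Fi_{22}$ acts nontrivially on the $3$-part of the multiplier, so that no central extension of $\mathbb{Z}/3$ by $\Fi_{22}{.}2$ restricting to $3\udot\Fi_{22}$ exists. That is true but requires a citation and is entirely avoidable: Lemma~\ref{CR2}(ii) tells you precisely that a Sylow $3$-subgroup $C_S(\rho)$ of $C_G(\rho)$ splits over the abelian normal $3$-subgroup $\langle\rho\rangle$, so Gasch\"utz's theorem \cite[(I.17.4)]{Huppert} gives the splitting of $C_G(\rho)$ directly, with no multiplier information. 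This is what the paper does. A smaller point: your inverting involution for $\rho$ is attributed to Proposition~\ref{2e62facts}(ii), which concerns the $3$-central element $Z$, not $\rho$; the paper instead observes that $\rho$ is inverted already inside $H$ by a conjugate of $\sigma$. Your concluding step ($\Out(\Aut(\M(22)))=1$, trivial centre, so the inverting involution can be corrected to centralise the $\Aut(\M(22))$ factor) is fine and makes explicit what the paper leaves implicit.
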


\begin{proof} Since $C_K(\rho)$ has a composition factor isomorphic to $\PSU_6(2)$ and $C_G(Z)$ does not, we know
that $\rho$ is not a $3$-central element of $G$. As $Z$ is conjugate to an element of $C_Q(\pi)\setminus Z$ by
Proposition~\ref{2e62facts} (iii), Lemma~\ref{CR2} (ii) implies that $C_{H}(\rho)/\langle \rho\rangle$ is similar to a
$3$-centralizer in  $\M(22)$. Since $Z$ is not weakly closed in $C_{C_S(\pi)}(\rho)$ with respect to $C_K(\rho)$,
we have that $Z$ is not weakly closed in $C_S(\rho)$ with respect to $C_G(\rho)$. Hence $C_G(\rho)/\langle
\rho\rangle$ is isomorphic to either $\M(22)$ or $\Aut(\M(22))$ by \cite[Theorem 2]{PSS}. Since $C_{H}(\rho)$ has
order $ 2^8\cdot 3^{10}$, we have that $C_G(\rho)/\langle \rho\rangle \cong \Aut(\M(22))$.  By Lemma~\ref{CR2} (ii),  $C_{S}(\rho)$
splits over $\langle \rho\rangle$ and therefore using a theorem  of Gasch\"utz \cite[(I.17.4)]{Huppert}
yields  that $C_G(\rho) \cong 3 \times \Aut(\M(22))$. Finally we note that $\rho$ is inverted in $H$ (by a
conjugate of $\sigma$ for example) and so the proof of the lemma is complete.
\end{proof}

\section{A $2$-local subgroup in $C_G(r)$}

In this section we locate  an extraspecial group $E$ of order $2^{23}$ and $+$-type  and prove that $N_G(E)/E
\cong \Co_2$. We have $C_K(Z) = C_Q(\pi)C_L(\pi)$ and $C_R(\pi )\cong 2^2\times \Q_8$ by
Lemma~\ref{c2}. Remember that $\wt K = K/\langle \pi \rangle \cong {}^2\E_6(2){:}2$.

By Proposition~\ref{2e62facts} (v) there is an involution $\wt r \in C_{\wt R}(\pi)\setminus Z(\wt R)$ such that $C_{\wt{C_{Q} (\pi)}}(\wt r) \cong 3^{1+4}_+$
and such that $\wt r$ is a $2$-central involution in $\wt K$. Since $C_R(\pi )\cong 2^2\times \Q_8 \le O_{3,2}(C_H(\pi))$ by Lemma~\ref{c2}, every involution in $O_{3,2}(C_H(\pi))$ is conjugate to an involution in $C_R(\pi)$ and so we can choose a preimage $r$  of $\wt r$ in $C_R(\pi)$  to be an involution.
 By  Proposition~\ref{2e62facts} (vi), $$C_{\wt K}(\wt r)\approx
2^{1+20}_+.\PSU_6(2){:}2$$
and  $C_{\wt K} (\wt r)$ acts absolutely irreducibly on $F^*(C_{\wt K}(\wt r))/\langle \wt r\rangle$.

Let $B \in \syl_2(N_{K}(\langle \pi, r\rangle))$ and  set $W= C_Q(\langle \pi, r\rangle)$. Then, by the choice of $r$, $W \cong 3^{1+4}_+$ and, by Lemma~\ref{CR}(i),
  $W= C_Q(r)$.
Taking $\rho \in W\setminus Z$, we have $\rho$ is not $H$-conjugate to $Z$ by Lemma~\ref{CR2} (i). It follows
that $C_{\wt{K}}(\wt \rho) \cong 3 \times \PSU_6(2).2$ and thus $C_G(\rho) \cong 3 \times \Aut(\M(22))$ by
Lemma~\ref{Crho}.

\begin{lemma}\label{ZT} We have $O_2(C_G(Z(B)))'=\langle r\rangle$ and $Z(B)= \langle r,\pi\rangle$.
Furthermore, if $J$ is the preimage of $ F^*(N_{\wt K}(Z(\wt
B)))$, then $J =\langle \pi \rangle J_1$ where $J_1$ is extraspecial of order $2^{21}$ and $+$-type.
\end{lemma}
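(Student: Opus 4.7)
The plan is to establish the three claims in sequence: first, $Z(B) = \langle r,\pi\rangle$; then, $J = O_2(C_G(Z(B)))$ with $J' = \langle r\rangle$; finally, the splitting $J = \langle\pi\rangle J_1$ with $J_1$ extraspecial of $+$-type and order $2^{21}$.

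For the centre calculation, I would use that $\wt r$ is $2$-central in $\wt K$, so $\wt B$, and hence $B$, is a Sylow $2$-subgroup of $C_K(r)$. Both $\pi \in Z(K)$ and $r \in Z(C_K(r))$ then lie in $Z(B)$, giving $\langle r,\pi\rangle \le Z(B)$. For the reverse inclusion, Proposition~\ref{2e62facts}(vi) says $C_{\wt K}(\wt r)/\wt E \cong \PSU_6(2){:}2$ acts absolutely irreducibly, hence faithfully, on $\wt E/\langle \wt r\rangle$. Therefore $C_{\wt B}(\wt E) = Z(\wt E) = \langle \wt r\rangle$, and since $Z(\wt B)$ centralizes the normal subgroup $\wt E$, it follows that $Z(\wt B) = \langle \wt r\rangle$ and $Z(B) = \langle r,\pi\rangle$.

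Next I would identify $J$ and determine its derived subgroup. Since $C_G(Z(B)) = C_G(\pi) \cap C_G(r) = C_K(r)$ and $C_{\wt K}(\wt r)/\wt E \cong \PSU_6(2){:}2$ has trivial $2$-core, $O_2(C_{\wt K}(\wt r)) = \wt E$, so $J$ coincides with $O_2(C_K(r))$ and has order $2^{22}$. Repeating the centralizer-of-extraspecial argument inside $J$ gives $Z(J) = \langle r,\pi\rangle$ and $J' \le Z(J)$. The commutator then yields a $C_K(r)/J$-invariant alternating bilinear form $\beta : V \times V \to \langle r,\pi\rangle$ on $V = J/Z(J)$, which is the natural absolutely irreducible $20$-dimensional $\mathbb{F}_2\PSU_6(2){:}2$-module. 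Decomposing $\beta = \beta_r \cdot r + \beta_\pi \cdot \pi$, both components are invariant alternating $\mathbb{F}_2$-forms on $V$; absolute irreducibility and self-duality of $V$ force the space of such forms to be one-dimensional, spanned by $\beta_r$ (the reduction mod $\pi$, which is the natural symplectic form on $\wt E/\langle \wt r\rangle$). Thus $\beta_\pi \in \{0,\beta_r\}$, reducing $J'$ to $\langle r\rangle$ or $\langle r\pi\rangle$. The explicit choice of $r$ in $C_R(\pi)$, combined with either a direct commutator computation in a tractable $2$-subgroup of $C_K(r)$ or the known Schur structure of the double cover $F^*(K) = 2\udot {}^2E_6(2)$, pins down $J' = \langle r\rangle$.

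Finally, with $J' = \langle r\rangle$, the squaring map descends to $\wt q : V \to \langle r,\pi\rangle$ of the form $\wt q = q \cdot r + \delta \cdot \pi$, where $q$ is the natural quadratic form of $+$-type polarizing to $\beta_r$. The identity $(xy)^2 = x^2 y^2 [y,x]$ together with $[y,x] \in \langle r\rangle$ forces $\delta : V \to \mathbb{F}_2$ to be $\mathbb{F}_2$-linear; invariance and $\Hom_G(V,\mathbb{F}_2) = 0$ (from nontriviality of the irreducible $V$) then give $\delta = 0$. Hence $J^2 \le \langle r\rangle$, $\Phi(J) \le \langle r\rangle$, and $J/\langle r\rangle$ is elementary abelian of order $2^{21}$. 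Taking a complement $J_1/\langle r\rangle$ to $\langle \pi\rangle\langle r\rangle/\langle r\rangle$ in $J/\langle r\rangle$ and pulling back produces $J_1 \le J$ with $J = \langle\pi\rangle J_1$ and $J_1 \cap \langle\pi\rangle = 1$. Then $Z(J_1) = \langle r\rangle$, the commutator and square maps on $J_1$ reproduce those on $\wt E$ via the isomorphism $J_1 \to \wt E$, and $J_1/\langle r\rangle \cong V$ carries $q$ of $+$-type, so $J_1 \cong 2^{1+20}_+$. The main obstacle is the middle step: the invariant-form argument only reduces $J'$ to two possibilities, and the final selection between $\langle r\rangle$ and $\langle r\pi\rangle$ requires either a concrete commutator computation or explicit structural input from the $2$-fold central cover of ${}^2E_6(2)$.
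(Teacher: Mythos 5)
Your proposal reproduces most of the paper's argument, and in one place improves on it: your observation that the space of invariant alternating forms on the absolutely irreducible self-dual module $V$ is one-dimensional, so that the two components of the commutator map $V\times V\to Z(J)$ must be proportional, is a clean self-contained replacement for the paper's citation of \cite[Lemma 2.73]{SymplecticAmalgams} to exclude $|J'|=4$; likewise your linearity argument for the $\pi$-component of the squaring map is the same in substance as the paper's $\Omega_1(J/\langle r\rangle)$ argument for $\Phi(J)=J'$. The genuine gap is exactly the step you flag: deciding between $J'=\langle r\rangle$ and $J'=\langle r\pi\rangle$. This cannot be finessed by renaming, because $r$ has already been fixed as a concrete involution of $R\le H$ (so that $C_Q(r)=W\cong 3^{1+4}_+$ and Lemma~\ref{CR} apply to it) while $r\pi\notin QR$, and because Lemma~\ref{not2central} later uses precisely the identity $O_2(C_G(Z(B)))'=\langle r\rangle$ to separate $r$ from $\pi$ and $r\pi$ under fusion. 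Neither of your suggested remedies is carried out: no tractable $2$-subgroup is exhibited in which to compute a commutator equal to $r$, and the relevant structural fact about $2\udot{}^2\E_6(2)$ (how the preimage of $2^{1+20}_+$ sits over the centre of the cover) is essentially the statement being proved, so invoking it would be circular here.

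The paper closes this gap with a $3$-local argument absent from your proposal. Let $W=C_Q(r)\cong 3^{1+4}_+$ act on $J$. Since $W$ has no faithful $\GF(2)$-representation of dimension less than $18$, one gets $C_{\wt J}(\wt W)\cong \Q_8$, so $C_J(W)$ is a nonabelian $2$-group of order $2^4$ whose derived subgroup covers $\langle \wt r\rangle$, i.e.\ contains $r$ or $r\pi$. But $C_J(W)\le C_K(Z)=C_H(\pi)$ and $C_J(W)'\le C_{QR}(\pi)\le QR$; since $r\in R$ and $r\pi\notin QR$, the only available preimage of $\wt r$ is $r$ itself, forcing $r\in C_J(W)'\le J'$ and hence $J'=\langle r\rangle$. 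This is the ingredient your write-up needs. A smaller related point: your opening step assumes $B\le C_K(r)$ (so that $r\in Z(B)$), which presupposes that $r$ and $r\pi$ are not fused in $N_K(\langle\pi,r\rangle)$; this too only follows once $J'=\langle r\rangle$ is known, so the order in which the claims are established matters.
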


\begin{proof}  We have that $J$ is the preimage of $$\wt J=O_2(N_{\wt K}(Z(\wt B)))= F^*(N_{\wt K}(Z(\wt
B))).$$ Therefore $\wt J$ is extraspecial of order $2^{21}$ and $+$-type. Assume that $J'$ has order $4$. Then, as
$N_{\wt K}(Z(\wt B))$ acts irreducibly on $J/J'$, we have $J'= Z(J)$ and then \cite[Lemma
2.73]{SymplecticAmalgams} implies that the representation of $\PSU_6(2)$ on $J/J'$ can be written over $\GF(4)$
and as such has dimension 10.  This however contradicts Proposition~\ref{2e62facts} (vi).

Thus $J'$ has order $2$. As $W\cong 3^{1+4}_+$ acts on $J/Z(B)$ and $W$ has no faithful representations of dimension less that $18$, we see that $C_{\wt J}(\wt Z) \cong \Q_8$. Since $\wt W$ is normalized by $C_{\wt J}(\wt Z)$, we obtain $C_{\wt J}(\wt W)=C_{\wt J}(\wt Z)\cong \Q_8$. Now
$C_J(W) \le C_K(Z)= C_H(\pi)$ and $C_J(W)'\le C_{QR}(\pi)$. It follows that $r \in C_J(W)'
$ and this proves the first part of the result.

We have $\Phi(J)\le \langle \pi, r\rangle$. If $\Phi(J)\neq J'$, then $J/J'$ must have exponent $4$ with some
element squaring to $\pi \langle r\rangle$. But then $\Omega_1(J/\langle r\rangle)$ contains $\pi\langle
r\rangle$ and has index $2$ in $J/\langle r\rangle$ and this contradicts $N_{\wt K}(Z(\wt B))$ acting irreducibly
on $J/\langle \pi ,r\rangle$. Thus $\Phi(J)=\langle r\rangle= J'$ and $Z(J) =\langle \pi, r\rangle$. It follows
that $J =\langle \pi \rangle J_1$ where $J_1$ is extraspecial of order $2^{21}$.
\end{proof}

\begin{lemma}\label{not2central} We have that $\pi$ is not $2$-central and if $|B_1 : B| = 2$, then $[B_1,r] = 1$. In particular $r$ is $2$-central. \end{lemma}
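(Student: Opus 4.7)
The strategy is to study the action on $Z(B)=\langle\pi,r\rangle$ (which has this form by Lemma~\ref{ZT}) of an element $t\in B_1\setminus B$. Since $|B_1:B|=2$, $t$ normalises $B$ and hence $Z(B)$, permuting the three involutions $\pi$, $r$, $\pi r$. If $\pi^t=\pi$, then $t\in C_G(\pi)=K$ and $t\in N_K(\langle\pi,r\rangle)$, so $\langle B,t\rangle$ is a $2$-subgroup of $N_K(\langle\pi,r\rangle)$; since $B\in\syl_2(N_K(\langle\pi,r\rangle))$, Sylow's theorem forces $t\in B$, a contradiction. Hence $\pi^t\in\{r,\pi r\}$. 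The case $\pi^t=\pi r$ forces $r^t=r$ and hence $[t,r]=1$, giving $[B_1,r]=1$ as required; the case $\pi^t=r$ would put $\pi$ and $r$ in the same $G$-class and must be ruled out.

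The heart of the proof is to show $\pi\not\sim_G r$; this is the step I expect to be the main obstacle. The plan is to apply Lemma~\ref{fusion} to the $3$-element $z$ generating $Z$: condition~(a), $C_G(z)=H$, is automatic from the hypotheses, while condition~(b), $z^G\cap H=z^H$, must be secured. I would extract condition~(b) from the $3$-local fusion analysis carried out in Section~4 (in particular Lemma~\ref{basics}(iii), which classifies the $H$-orbits of elements of order $3$ in $S$) together with the weak-closure information of Lemma~\ref{wk2}. Granting condition~(b), Lemma~\ref{fusion}(ii) gives $\pi^G\cap H=\pi^H$. Now $\pi\in L\setminus R$ while $r\in R\setminus Z(R)$, and in $H=QL$ the normal $3$-subgroup $Q$ preserves the coset decomposition of $L$ modulo $R$; thus the $H$-orbit of $\pi$ is contained in cosets of $R$ distinct from the one containing $r$, so $\pi\not\sim_H r$ and hence $\pi\not\sim_G r$. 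Verifying condition~(b) in this setting is the technically demanding point.

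Once the bad case has been eliminated, $[B_1,r]=1$ follows. To deduce that $r$ is $2$-central, embed $B$ in a Sylow $2$-subgroup $T$ of $G$ containing $B$ and connect them by a chain $B=B_0<B_1<\cdots<B_k=T$ of successive index-$2$ extensions (possible by nilpotency of the $2$-group $T$); applying the above at each stage gives $[T,r]=1$, so $r\in Z(T)$. For $\pi$ not being $2$-central: a short cover-theoretic calculation based on Lemma~\ref{ZT} shows $r\not\sim_K\pi r$ (since $|C_K(r)|=2|C_{\wt K}(\wt r)|$ forces $r^K$ and $(\pi r)^K$ to be distinct classes), whence $N_K(\langle\pi,r\rangle)=C_K(r)$ and $|B|=|C_K(r)|_2=|K|_2$, so $B\in\syl_2(K)$. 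If $\pi$ were $2$-central in $G$, $B$ would also be a Sylow $2$-subgroup of $G$ and $N_G(B)/B$ would have odd order; this can be contradicted by exhibiting a $2$-element of $N_G(B)\setminus B$, which will emerge from the forthcoming $2$-local structure being developed in Section~7 (or equivalently, from the $G$-fusion that forces $\pi$ to be conjugate to $\pi r$ but keeps it distinct from $r$).
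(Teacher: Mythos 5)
Your case division on how $t\in B_1\setminus B$ permutes $Z(B)^\#=\{\pi,r,\pi r\}$ is reasonable, but the step you yourself flag as the heart of the matter --- ruling out $\pi^t=r$ by proving $\pi\not\sim_G r$ via Lemma~\ref{fusion} applied to a generator $z$ of $Z$ --- cannot work. Hypothesis (b) of Lemma~\ref{fusion} for $x=z$ reads $z^G\cap H=z^H$; since $z\in Z(H)$ we have $z^H=\{z\}$, whereas the standing hypothesis of Theorem~\ref{Main2} is precisely that $Z$ is \emph{not} weakly closed in $S$, so there is $g$ with $z^g\in Z^g\le S\le H$ and $z^g\ne z$. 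Thus condition (b) is false, and no amount of input from Section~4 or Lemma~\ref{wk2} (which asserts non-weak-closure, i.e.\ points the opposite way) will secure it. The paper sidesteps the whole issue: $B_1$ normalizes $B$, hence $Z(B)$, hence $C_G(Z(B))$, hence the characteristic subgroup $O_2(C_G(Z(B)))'=\langle r\rangle$ identified in Lemma~\ref{ZT}; a $2$-group normalizing a group of order $2$ centralizes it, so $[B_1,r]=1$ at once, and in particular $\pi^t=r$ is impossible since $r^t=r$. No comparison of $G$-classes of $\pi$ and $r$ is needed. (Your chain $B=B_0<\cdots<B_k=T$ also does not iterate as stated: $B_2$ normalizes $B_1$ but need not normalize $B$, so it need not act on $Z(B)$.)

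The second gap is in the claim that $\pi$ is not $2$-central. You propose to contradict $B\in\Syl_2(G)$ by ``exhibiting a $2$-element of $N_G(B)\setminus B$\ldots which will emerge from the forthcoming $2$-local structure being developed in Section~7''; but Lemma~\ref{E1} and Proposition~\ref{NE} depend on the present lemma (they use the existence of $B_1$ and the $2$-centrality of $r$), so this is circular. Your parenthetical alternative --- that $G$-fusion forces $\pi\sim_G\pi r$ --- is the right idea but is asserted without proof, and it is exactly the nontrivial content of the paper's argument: assuming $B\in\Syl_2(G)$, Burnside fusion in $Z(B)$ together with $\langle r\rangle=O_2(C_G(Z(B)))'$ shows no two elements of $Z(B)^\#$ are $G$-conjugate, and this is then contradicted by locating $\pi$ and $\pi r$ inside $E(C_G(\rho))\cong\M(22)$ (available from Lemma~\ref{Crho} in Section~6) and using the involution class data of $\M(22)$ to force $\pi\sim\pi r$ there. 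Without that (or some substitute) argument your proof of the first assertion of the lemma is incomplete.
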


\begin{proof} Suppose that $B \in \syl_2(G)$. Then any fusion between elements of $Z(B)$ occurs in $N_G(B)$. Hence
either no two elements of $Z(B)^\#$ are conjugate or they are all conjugate. Since $\langle r\rangle=
O_2(C_G(Z(B)))'$ by Lemma~\ref{ZT}, $r$ is not conjugate to either $\pi$ or $\pi r$. Hence $\pi$ and $\pi r$ are
not conjugate in $G$.

By construction  $C_{K}(\rho)$ involves $\PSU_6(2)$. Hence, by \cite[Table $\M(22)$, page 251]{Aschbacher3T} and
Lemma~\ref{Crho}, $\pi$ is a 3-transposition in $E(C_G(\rho))\cong \M(22)$. Now from the choice of $Z(B)$, the
centralizer of $\rho$ in $C_G(Z(B))$ involves $\SU_4(2)$. Hence we have that $C_{E(C_G(\rho))}(r)$ has
shape $2^{(1+1)+8}.\SU_4(2).2$ with centre $\langle \pi, r\rangle$. Since $E(C_G(\rho))\cong \M(22)$ has
exactly three conjugacy classes of involutions and since one for of these classes the centralizer does not involve
$\SU_4(2)$ (see \cite[Table 5.3t]{GLS3}), we deduce that $\pi$ and $\pi r$ are conjugate in $F^*(C_G(\rho))$.
With this contradiction we have $B   \not \in \syl_2(G)$. Now let $|B_1: B| = 2$, then $B_1$ acts on $Z(B)$ and
so on $O_2(C_G(Z(B)))^\prime$. So $B_1$ centralizes $r$ by Lemma~\ref{ZT} and so we conclude $r$ is $2$-central.
\end{proof}

\begin{lemma} \label{E1}There is an extraspecial group $E$ of order $2^{23}$ and plus type containing $\pi$
such that $N_{K}(E)E/E \cong \PSU_6(2){:}2$.
\end{lemma}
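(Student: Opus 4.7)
The construction centers on extending the subgroup $J$ from Lemma~\ref{ZT} by an element lying outside $K$. First I would observe that the target $E$ must have centre $\langle r\rangle$ rather than $\langle \pi\rangle$: if $E\le K$ then $\pi\in Z(E)$ (since $\pi$ is central in $K$), forcing $E/\langle \pi\rangle$ to be elementary abelian of order $2^{22}$ inside $\wt K\cong {}^2\E_6(2){:}2$, and a dimension count inside $C_{\wt K}(\wt r)\approx 2^{1+20}_+.\PSU_6(2){:}2$ rules this out. Hence $E\not\le K$, $Z(E)=\langle r\rangle$, and $C_E(\pi)=E\cap K$ is a hyperplane of $E$ of order $2^{22}$ whose centre contains $\langle \pi,r\rangle$. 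By the uniqueness of $J$ as the maximal normal $2$-subgroup of $C_K(r)$ recorded in Lemma~\ref{ZT}, I am forced to have $E\cap K=J$, and any extending element $g\in E\setminus J$ must satisfy $\pi^g=\pi r$ (the only nontrivial action on $\langle \pi,r\rangle$ consistent with $r\in Z(E)$ and $g^2\in\langle r\rangle$).

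The technical heart of the proof is therefore to locate an involution $g\in N_G(J_1)\setminus K$ with $\pi^g=\pi r$ and $[g,J_1]\le \langle r\rangle$. My plan is to use Lemma~\ref{Crho}: for $\rho\in C_Q(\pi)\setminus Z$ the group $N_G(\langle\rho\rangle)\cong \Sym(3)\times \Aut(\M(22))$ is a $3$-transposition group in which the image of $\pi$ is a Fischer transposition. Inside $\Aut(\M(22))$ I would locate a commuting Fischer transposition $\pi'$ with product $\pi\pi'$ corresponding to $r$, and take $g$ to be an involution swapping $\pi$ and $\pi'$. An easy permutation computation on $\{\pi,r,\pi r\}$ shows such a $g$ centralises $r$, and matching the $g$-orbits against the known irreducible $20$-dimensional $\PSU_6(2){:}2$-module on $J_1/\langle r\rangle$ (Proposition~\ref{2e62facts}) forces $g$ to act trivially modulo $\langle r\rangle$. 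Setting $E=J\langle g\rangle$, routine commutator and order calculations yield $|E|=2^{23}$, $Z(E)=E'=\langle r\rangle$, exponent $4$, and plus type (inherited from the plus type of $J_1$ together with the hyperbolic extension by the two extra dimensions spanned by the images of $\pi$ and $g$ in $E/\langle r\rangle$).

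Finally, $N_K(E)\le N_K(\langle r\rangle)=C_K(r)$ since $Z(E)=\langle r\rangle$ is preserved, and conversely every element of $C_K(r)$ normalises $J$ and the unique $C_K(r)$-invariant coset $gJ$ in $N_G(J)/J$, hence normalises $E$. Thus $N_K(E)=C_K(r)$ and $N_K(E)E/E=C_K(r)/J\cong \PSU_6(2){:}2$ as required. The main obstacle is the production of $g$: the fusion of $\pi$ with $\pi r$ is not visible inside $K$ (by Lemma~\ref{not2central}, $\pi$ is not $2$-central while $\pi r$ is up to the relation $[\pi,g]=r$), so it must be extracted from the $3$-transposition structure of the Fischer group $\Aut(\M(22))$ supplied by Lemma~\ref{Crho}. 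Once $g$ is in hand, the verifications of the extraspecial structure and of the normaliser are essentially bookkeeping.
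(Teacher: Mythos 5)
Your backward analysis of what $E$ must look like ($Z(E)=\langle r\rangle$, $E\cap K=J$, and an extending element $g\notin K$ with $\pi^g=\pi r$ and $[g,J_1]\le\langle r\rangle$) is consistent with what the paper actually constructs, but the way you propose to manufacture $g$ has genuine gaps. First, the swapping involution cannot be found where you look for it: in the proof of Lemma~\ref{not2central} the paper records that $C_{E(C_G(\rho))}(r)$ has centre $\langle\pi,r\rangle$, so \emph{every} element of $E(C_G(\rho))\cong \M(22)$ that centralizes $r$ also centralizes $\pi$; the $\M(22)$-conjugation taking $\pi$ to $\pi r$ necessarily moves $r$. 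A $g$ centralizing $r$ and swapping $\pi$ with $\pi r$ could therefore only live in the outer coset of $\Aut(\M(22))$ (or outside $C_G(\rho)$ altogether), and you give no argument that such an involution exists there --- establishing this amounts to a nontrivial fact about how $\Aut(\M(22))\setminus \M(22)$ acts on commuting pairs of transpositions. Second, and independently, the assertion that ``matching the $g$-orbits against the $20$-dimensional module forces $g$ to act trivially modulo $\langle r\rangle$'' is not a proof: $C_{J_1}(\rho)$ has order only about $2^{11}$, so almost all of $J_1$ is invisible from inside $C_G(\rho)$, and a priori $g$ could induce on $J/\langle\pi,r\rangle$ any map compatible with the automorphism it induces on $C_K(r)/J\cong\PSU_6(2){:}2$. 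You would need to show that $g$ can be corrected by an element of $C_K(r)$ so as to centralize both $J/\langle r\rangle$ \emph{and} $C_K(r)/J$ --- the latter condition is also what your final paragraph silently uses when claiming that $C_K(r)$ normalizes the coset $gJ$ and hence $E=J\langle g\rangle$.

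The paper obtains all three missing ingredients in one stroke. By Lemma~\ref{not2central}, $B$ is not a Sylow $2$-subgroup of $G$, so there is $B_1>B$ with $|B_1:B|=2$ and $[B_1,r]=1$; then $B_1$ normalizes $C_K(r)$ and $J$, and $E:=O_2(B_1C_K(r))$ satisfies $|E/J|=2$ because $|\Out(\PSU_6(2))|$ is not divisible by $4$. Since this $E$ is \emph{normal} in $B_1C_K(r)$, it automatically acts trivially on the irreducible module $J/\langle\pi,r\rangle$, is normalized by $C_K(r)$, and contains an element moving $\pi$ to $\pi r$; the extraspecial structure then follows from \cite[23.8]{AschbacherFG} applied to $J_1$. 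That normality is precisely the leverage your construction lacks, and without it (or a substitute argument) the central step of your proposal does not go through.
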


\begin{proof} By Lemma~\ref{ZT},
$C_{K}(r)/\langle r \rangle$ is an extension of an elementary abelian group of order $2^{21}$ by
$\PSU_6(2){:}2$. Moreover, we have $C_K(r)= C_G(\langle \pi, r\rangle)=  C_G(Z(B))$. Now, by Lemma
\ref{not2central}, there is a group $B_1$ with $|B_1 : B| = 2$ and $[B_1,r] = 1$. In particular, $B_1$
normalizes $Z(B)$ and therefore $B_1$ normalizes $C_K(r)=C_G(Z(B))$ and $J=O_2(C_K(r))$. Thus $B_1C_K(r)/J$ has a
normal subgroup of index $4$ isomorphic to $\PSU_6(2)$. As $\Out(\PSU_6(2))$ is not divisible by $4$ and as
$C_K(r)/J \cong \PSU_6(2){:}2$, we infer that $|O_2(B_1C_K(r))/J|=2$. Set $E= O_2(B_1C_K(r))$. Then,  as $J/\langle
\pi, r \rangle$ is an irreducible module for $C_K(r)/J\cong \PSU_6(2){:}2$ and $[Z(J),E] =\langle r\rangle$, we
have $[J/\langle r\rangle, E]=1$. Let $J_1$ be as in Lemma~\ref{ZT}. Then $[J_1,E]\le \langle r\rangle$. Hence
\cite[23.8]{AschbacherFG} implies that $E= C_{E}(J_1)J_1$. Now we have that $C_{E}(J_1)$ contains $\langle \pi,
r\rangle$ and is non-abelian. It follows that $C_{E}(J_1)$ is a dihedral group of order $8$ and that $E$ is an
extraspecial group of order $2^{23}$ and +-type. Finally we note that $N_K(E)E = N_K(E)B_1$ and have the result.
\end{proof}

\begin{lemma}\label{signalizer} The following hold.
\begin{enumerate}

\item $C_E(Z) = C_E(W) =O_2(C_H(r))$ is extraspecial of order $2^5$ and is the unique maximal signalizer for $W$ in
$C_H(r)$; and
%\item $r \in QR$ and $|C_Q(r)| = 3^5$; and
\item $C_E(\rho)$ has order $2^{11}$ and  $C_{N_G(E)/E}(\rho E  ) \cong 3 \times \SU_4(2){:}2$.
\end{enumerate}
In particular,  $E$  is the unique maximal $W$-signalizer in $C_G(r)$.
\end{lemma}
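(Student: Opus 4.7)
For part (i) my approach is the following. First, $C_E(W) \le C_E(Z)$ since $Z = Z(W) \le W$. Conversely, $C_E(Z) \le C_G(Z) \cap C_G(r) = C_H(r)$ is $W$-invariant because $W \le C_K(r)$ normalizes $E$ and $W$ centralizes $Z$; thus $C_E(Z)$ is a $W$-signalizer in $C_H(r)$, and by Lemma~\ref{CR}(iii) it centralizes $W$. Hence $C_E(Z) = C_E(W)$, and this common subgroup lies in the unique maximal $W$-signalizer $O_2(C_H(r))$. For the reverse inclusion $O_2(C_H(r)) \le C_E(Z)$ I would match orders. Since $Z \cap E = 1$, the image $ZE/E$ is a $3$-element of $N_G(E)/E \cong \PSU_6(2){:}2$, and by Lemma~\ref{c3}(i) its centralizer inside $\wt K$ is similar to a $3$-centralizer in ${}^2\E_6(2)$. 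Determining the class of $ZE/E$ in $\PSU_6(2){:}2$ pins down its action on the $20$-dimensional irreducible $J_1/\langle r \rangle$; combined with the trivially-acted quotient $E/J_1$ of order $4$, this yields $|C_E(Z)| = 2^5$, matching $|O_2(C_H(r))|$.

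For part (ii), the determination of $C_{N_G(E)/E}(\rho E)$ follows from the fact (established before Lemma~\ref{Crho}) that $C_K(\rho)$ involves $\PSU_6(2)$: this forces $\rho E$ to be an element of the conjugacy class in $\PSU_6(2){:}2$ with centralizer $3 \times \SU_4(2){:}2$. For $|C_E(\rho)|$ I would use that, by coprime action, $E/\langle r \rangle$ splits over $\langle \rho \rangle$ as $J_1/\langle r \rangle \oplus E/J_1$, with $E/J_1$ of order $4$ centralized by $\rho$. Standard facts about the action of an element with centralizer $3 \times \SU_4(2)$ on the $20$-dimensional $\PSU_6(2)$-module give an $8$-dimensional fixed subspace, so $|C_E(\rho)| = 2 \cdot 2^{8} \cdot 2^{2} = 2^{11}$.

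For the final assertion, I would take $T$ to be a $W$-invariant $2$-subgroup of $C_G(r)$ maximal with this property and show $T \le E$. The subgroup $C_T(Z)$ is a $W$-invariant $2$-subgroup of $C_H(r)$, so $C_T(Z) \le O_2(C_H(r)) = C_E(Z) \le E$ by (i). Since $T/C_T(Z)$ embeds in $\Aut(Z)$, a group of order $2$, we have $|T : C_T(Z)| \le 2$. If $T = C_T(Z)$ we are done. Otherwise a coset representative $t \in T$ inverts $Z$ and centralizes some $\rho \in W \setminus Z$ (using the extraspecial structure of $W$); thus $t \in C_G(\rho) \cong 3 \times \Aut(\M(22))$ by Lemma~\ref{Crho}, and part (ii) combined with the $2$-local structure of $\Aut(\M(22))$ places $t$ inside $C_E(\rho) \le E$, a contradiction.

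The main obstacle is the reverse inclusion $O_2(C_H(r)) \le C_E(Z)$ in (i), which requires threading the $H$-side structure of $O_2(C_H(r))$ through the $K$-side module structure on $E/\langle r \rangle$ via the action of $Z$. Once (i) is in place, both (ii) and the concluding uniqueness statement follow cleanly from module computations on the $20$-dimensional $\PSU_6(2){:}2$-module and coprime-action arguments.
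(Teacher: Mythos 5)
There are genuine gaps here, the most serious being in your proof of the final assertion. The claim that $T/C_T(Z)$ embeds in $\Aut(Z)$ is false: a $W$-invariant $2$-subgroup $T$ of $C_G(r)$ need not normalize $Z$, and indeed $E$ itself is such a subgroup with $|E:C_E(Z)|=2^{18}$ by your own part (i), so the bound $|T:C_T(Z)|\le 2$ cannot hold and the rest of that argument collapses. The paper instead uses coprime generation: with $A=\langle Z,\rho\rangle\cong 3\times 3$, any $W$-signalizer $I$ satisfies $I=\langle C_I(B)\mid B\le A,\ |B|=3\rangle$; the three subgroups $B\neq Z$ are permuted transitively by conjugation in $W$, and then $C_I(Z)\le O_2(C_H(r))\le E$ by (i) while each $C_I(\rho')\le C_E(\rho')\le E$ by the uniqueness statement established in the course of (ii), whence $I\le E$.

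Second, your order computations in (i) and (ii) are circular as written and rest on data the paper never establishes. You invoke $N_G(E)/E\cong \PSU_6(2){:}2$, but the structure of $N_G(E)/E$ is only determined (as $\Co_2$) in Proposition~\ref{NE}, whose proof uses the present lemma; what is available at this stage is only $N_K(E)E/E\cong \PSU_6(2){:}2$ from Lemma~\ref{E1}. Moreover, the fixed-point dimensions of elements of order $3$ on the $20$-dimensional module, which you need both for $|C_E(Z)|=2^5$ and for the ``$8$-dimensional fixed subspace'' giving $|C_E(\rho)|=2^{11}$, appear nowhere in the paper (Proposition~\ref{Vaction} treats only involutions), and you do not supply them. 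The paper's route avoids all of this: for (i) it uses that the smallest faithful $\GF(2)$-representation of $W\cong 3^{1+4}_+$ has dimension $18$, so on $E/\langle r\rangle$ of rank $22$ the fixed points of $Z$ have order exactly $2^4$ and $|C_E(Z)|=2^5$; for (ii) it notes that $Z$ is fixed-point-free on $E/C_E(W)$ of order $2^{18}$ and that the three order-$3$ subgroups of $\langle Z,\rho\rangle$ other than $Z$ are $W$-conjugate, so each fixes exactly $2^6$ there, giving $|C_E(\rho)|=2^5\cdot 2^6=2^{11}$; and the factor $\SU_4(2){:}2$ is read off from $C_G(\rho)\cong 3\times\Aut(\M(22))$ (Lemma~\ref{Crho}) together with the involution-centralizer data for $\M(22)$, not from a conjugacy class of $\rho E$ in a quotient whose isomorphism type is not yet known. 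On the positive side, your derivation of $C_E(Z)=C_E(W)\le O_2(C_H(r))$ directly from Lemma~\ref{CR}(iii) is correct and is a clean alternative to the paper's use of $C_{\wt J}(\wt W)\cong \Q_8$.
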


\begin{proof}  Since $W$ is extraspecial of order $3^{1+4}_+$, and the smallest faithful $\GF(2)$-representation of $W$
has dimension 18 we have that $C_E(Z) $ is extraspecial of order $2^5$. Now $[E,W]\le O_2(C_K(r))$ and
$\langle r,\pi\rangle$ is centralized by $W$. Thus, as $C_{J/\langle \pi \rangle}(W) \cong \Q_8$, we have
$C_E(W)=C_E(Z)$. From the choice of $r$, we have $r \in R$. Hence we may apply Lemma~\ref{CR} (iii). From there we see
that $C_E(W) = O_2(C_H(r)) \cong 2^{1+4}_-$ is the unique maximal signalizer for $W$ in $C_H(r)$. Hence (i)
holds.

%
%
%\red{ Now $C_E(F) \leq C_E(Z)$. By \cite[]{PSS} we have that $C_{K \cap E}(F)$ has order 16. Hence $C_E(Z) =
%C_F(Z)  \le C_H(r)$. Suppose that $r \not\in RQ$. Then we have that $C_E(Z) \cap RQ = 1$. The structure of $H$
%shows that we may assume that $r \in \pi QR$. By Lemma~\ref{basics}(x) we then have that $r \sim \pi$ or $\pi
%\sigma$ in $H$. But $|C_Q(\pi)| = 3^7$ by Lemma~\ref{basics}(ix) and so $|C_Q(\sigma \pi)| = 3^3$, which
%contradicts $|C_Q(r)| \geq 3^5$ and $\pi \not\sim r$ by Lemma~\ref{not2central}. This proves (ii). Now we may
%apply Lemma~\ref{CR}. From there we see that $C_E(F) = O_2(C_H(r)) \cong 2^{1+4}_-$ is the unique maximal
%signalizer for $F$ in $C_H(r)$. Hence (i) holds.}

 Since $r$ commutes with $\rho$,  $E/C_E(W)$ admits $\langle \rho, Z \rangle$ with $Z$ fixed-point-free and
 therefore, as $|E/C_E(W)|= 2^{18}$ and all the subgroups  of $\langle \rho,Z\rangle$ other than $Z$ are conjugate in $W$,
 $C_E(\rho)$ has order $2^{11}$. Moreover,  by the
choice of $\rho$, we have that $C_E(\rho)$ is normalized by $\SU_4(2)$ which is involved in $C_{C_G(Z(B))}(\rho)$. The
structure of $C_G(\rho)$ as given in Lemma~\ref{Crho} and \cite[Table $\M(22)$, page 250]{Aschbacher3T}  show
that $E(C_{N_G(E)}(\rho ) /C_E(\rho)) \cong \SU_4(2)$, which is (ii).  Since in $\SU_4(2)$ an extraspecial group
of order 27 does not normalize any non-trivial $3^\prime$-groups, we have that  $3 \times \SU_4(2)$ has no
non-trivial $C_W(\rho)$-signalizers. Hence $C_E(\rho)$ is the unique maximal $C_W(\rho)$-signalizer in $C_G(r)$.
Suppose that $I$ is an $W$-signalizer. Then $I$ is generated by $W$-signalizers in $C_{C_G(r)}(Z) $ and
$W$-conjugates of $C_W(\rho)$-signalizers. Thus $I \le E$ by (i) and (ii).
\end{proof}

%
%\begin{lemma}\label{signalizer} $E$ is the unique maximal $F$-signalizer in $C_G(r)$. Further $|C_E(F)| = 2^{5}$
%\end{lemma}
%
%\begin{proof} From Lemma~\ref{E1},  $F$ acts on $E$.  Suppose that $U$ is an  $F$-signalizer in $C_G(r)$. Let $W = \langle \rho , Z \rangle \leq F$.
%We consider $C_U(Z) \ge C_U(F)$.  Now the structure of $C_G(Z)$ shows that $C_U(F)$ is
%contained in $E$. Now we consider $C_U(\rho)$. By Lemma \ref{Crho} we have that $C_G(\rho) \cong \langle \rho
%\rangle \times M(22):2$.We have that $F \cap E(C_G(\rho)) \cong 3^{1+2}$. We have $C_{C_G(r) \cap
%C_G(\pi)}(\rho)$ involves $U_4(2)$. Hence we have that $C_{C_G(\rho)/\langle \rho \rangle}(r) =
%2^{1+10}.\U_4(2):2$. Hence we have that $C_U(\rho) \leq O_2(C_{C_G(\rho)/\langle \rho \rangle}(r))$. On the other
%hand we have that $\langle Z \rho \rangle$ acts on $E/\langle r \rangle$, which is of order $2^{22}$. As $F$ act
%on this group we see that $|[E,Z]| = |[E,F]| = 2^{19}$. So we have that $C_E(Z) \leq C_E(\rho)$ and now by
%coprime action we get that $|C_E(\rho)| = 2^{11}$. This now shows that $C_U(\rho) \leq E$ again. So as $U =
%\langle C_U(Z), C_U(\rho), C_U(\rho z), C_U(\rho z^2) \rangle \leq E$, we have the assertion.
%\end{proof}

\begin{proposition}\label{NE} We have that $N_G(E)/E \cong \Co_2$. In particular, $C_{C_G(r)}(Z) \le N_G(E)$.
\end{proposition}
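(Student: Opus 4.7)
Set $M = N_G(E)$ and write $\bar M = M/E$. My plan is to identify $\bar M$ with $\Co_2$ by applying the identification theorem for $\Co_2$ from \cite{ParkerRowley}, which recognises $\Co_2$ from the shape of its $3$-centralizer together with a non-weak-closure hypothesis on its centre. The ``in particular'' clause $C_{C_G(r)}(Z) \le M$ will drop out as the first step. Indeed, $C_{C_G(r)}(Z) = C_G(\langle Z,r\rangle) = C_H(r)$, and every element of $C_H(r)$ normalises both $Q$ and $r$, hence normalises $W = C_Q(r)$. By Lemma~\ref{signalizer}, $E$ is the unique maximal $W$-signalizer in $C_G(r)$, so $N_{C_G(r)}(W) \le N_{C_G(r)}(E) \le M$, and therefore $C_H(r) \le M$.

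Next I would produce a subgroup of $\bar M$ similar to a $3$-centralizer in $\Co_2$ and prove it equals $C_{\bar M}(\bar Z)$. By Lemma~\ref{CR}(ii), $C_H(r)/O_2(C_H(r))$ is similar to a $3$-centralizer in $\Co_2$; and by Lemma~\ref{signalizer}(i), $O_2(C_H(r)) = C_E(W) \le E$. Setting $\bar H := C_H(r)E/E$, we therefore have a subgroup of $\bar M$ of the required shape, with $\bar Z = Z(O_3(\bar H))$. The containment $\bar H \le C_{\bar M}(\bar Z)$ is immediate from the previous step. For the reverse, I would lift any $\bar x \in C_{\bar M}(\bar Z)$ to $x \in M$ with $[Z,x] \le E$, then use the uniqueness assertion in Lemma~\ref{signalizer}, together with control of the $\pi$- and $r$-fusion supplied by Lemma~\ref{ZT} and Lemma~\ref{not2central}, to force $x$ into $C_H(r)E$.

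Finally I would verify the non-weak-closure hypothesis and apply \cite{ParkerRowley}. By Lemma~\ref{E1}, the subgroup $N_K(E)E/E \cong \PSU_6(2){:}2$ sits inside $\bar M$, and the structure of $C_{\wt E_\rho}(\wt Z)$ described at the start of Section~6 identifies $\bar Z$ as a $3$-central subgroup of this $\PSU_6(2){:}2$. Since a $3$-central element of $\PSU_6(2)$ is not weakly closed in a Sylow $3$-subgroup of $\PSU_6(2)$ (its $3$-centraliser $3^{1+4}_+.(\Q_8\times\Q_8).3$ is properly contained in a larger $3$-local), it is not weakly closed in a Sylow $3$-subgroup of $\bar H$ with respect to $\bar M$. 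Combined with $O_{2'}(\bar M)=1$, which follows from $O_{2'}(\bar H) = 1$ and a standard comparison of $3$-parts in $\bar M$, the theorem of \cite{ParkerRowley} yields $\bar M \cong \Co_2$.

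The principal obstacle in this plan is the equality $C_{\bar M}(\bar Z) = \bar H$ from the middle step. One must rule out any coset of $E$ in $M$ outside $C_H(r)E$ that centralises $Z$ modulo $E$, and the only available tool is the combination of the uniqueness of $E$ as the maximal $W$-signalizer in $C_G(r)$ with a careful analysis of the action of $M$ on the preimages in $M$ of $\langle \bar Z\rangle$ and $\langle \bar r\rangle$.
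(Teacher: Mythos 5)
Your overall strategy is exactly the paper's: use the uniqueness of $E$ as the maximal $W$-signalizer (Lemma~\ref{signalizer}) to force $N_{C_G(r)}(W)$, and hence $C_{C_G(r)}(Z)=C_H(r)$, into $N_G(E)$; recognise $C_{N_G(E)/E}(ZE/E)$ as a $3$-centralizer of $\Co_2$-type via Lemma~\ref{CR}(ii) and Lemma~\ref{signalizer}(i); verify non-weak-closure; and quote \cite[Theorem 1.1]{ParkerRowley}. Two points in your write-up need repair. First, your weak-closure verification points at the wrong copy of $\PSU_6(2)$: the group $E_\rho=E(C_K(\rho))$, whose $3$-centralizer $C_{\wt E_\rho}(\wt Z)$ of shape $3^{1+4}_+.(\Q_8\times\Q_8).3$ is described at the start of Section~6, is a \emph{subgroup} of $C_K(\rho)$, whereas $N_K(E)E/E$ is a \emph{quotient} of $C_K(r)$; the cited structure therefore says nothing directly about the image of $Z$ in the latter, and the parenthetical reason you give (the $3$-centralizer sits inside a larger $3$-local) does not by itself imply non-weak-closure. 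The paper avoids both difficulties by working instead inside $C_{N_G(E)/E}(\rho E)\cong 3\times\SU_4(2){:}2$, which is already established in Lemma~\ref{signalizer}(ii) and where the required non-weak-closure of $ZE/E$ reduces to the standard corresponding fact in $\SU_4(2)\cong\PSp_4(3)$.

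Second, the step you flag as the principal obstacle is routine and needs none of the fusion machinery you invoke. If $xE$ centralizes $ZE/E$, then $Z$ and $Z^x$ are both Sylow $3$-subgroups of $ZE$, so after multiplying $x$ by a suitable element of $E$ we may assume $Z^x=Z$, with $x$ still acting trivially on $ZE/E\cong Z$; hence $x\in C_G(Z)=H$ by the hypothesis of Theorem~\ref{Main2}. Since $Z(E)=\langle r\rangle$ gives $N_G(E)\le C_G(r)$, we conclude $x\in C_H(r)$ and so $C_{N_G(E)/E}(ZE/E)=C_H(r)E/E$, which is what your middle step requires.
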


\begin{proof} By Lemma \ref{signalizer} we have that $N_{C_G(r)}(W)$ normalizes $E$ and from Lemma~\ref{CR} (ii) we have
$N_{C_G(r)}(W)/O_2(C_H(r)) $ is similar to a $3$-centralizer in $\Co_2$. Since $O_2(C_H(r)) \le E$ by
Lemma~\ref{signalizer}(i), we have $C_{N_G(E)/E}(ZE/E)$ is similar to a $3$-centralizer in $\Co_2$.
Because $ZE/E$ is not weakly closed in a Sylow $3$-subgroup of
  $C_{N_G(E)/E}(\rho E)$, $ZE/E$ is not weakly closed in  a Sylow $3$-subgroup of  $N_G(E)/E$. Therefore using
  \cite[Theorem 1.1]{ParkerRowley} we get that $N_G(E)/E \cong
\Co_2$. Since $C_{C_G(r)}(Z) = N_{C_G(r)}(W)$, we also have $C_{C_G(r)}(Z) \le N_G(E)$.
\end{proof}

\begin{lemma}\label{strong}  $N_G(E)$ is strongly $3$-embedded in $C_G(r)$. In particular, $N_G(E)$ controls fusion of $3$-elements in $N_G(E)$.
\end{lemma}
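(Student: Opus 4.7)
The plan is to establish the strong $3$-embedding by verifying the standard criterion that $N_{C_G(r)}(P) \le N_G(E)$ for every non-trivial $3$-subgroup $P \le N_G(E)$; the fusion assertion in the lemma then follows from the standard fact that a strongly $p$-embedded subgroup controls $p$-fusion. I first note that $E' = \langle r \rangle$ by Lemma~\ref{ZT}, so $Z(E) = \langle r \rangle$ since $E$ is extraspecial, whence $N_G(E) \le C_G(r)$; moreover $|N_G(E)|_3 = |\Co_2|_3 = 3^6$, so $N_G(E)$ contains a Sylow $3$-subgroup of $C_G(r)$.

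The main vehicle is Lemma~\ref{fusion}, applied inside $C_G(r)$ with $H = N_G(E)$ and $x$ a generator of $Z$. Hypothesis (a), $C_{C_G(r)}(Z) \le N_G(E)$, is precisely Proposition~\ref{NE}. For hypothesis (b) I show $Z^{C_G(r)} \cap N_G(E) = Z^{N_G(E)}$: the element $ZE/E$ is $3$-central in $N_G(E)/E \cong \Co_2$, its centralizer there being the $3$-centralizer of type $\Co_2$ from Proposition~\ref{NE}; any $C_G(r)$-conjugate $Z^g$ lying in $N_G(E)$ has centralizer modulo $E$ of the same shape, so $Z^g E$ is $\Co_2$-conjugate to $ZE$, and a Schur--Zassenhaus argument inside $\langle Z^g \rangle E$ lifts this to $N_G(E)$. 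Lemma~\ref{fusion}(i) then yields $N_{C_G(r)}(K) \le N_G(E)$ for every $K$ with $Z \le K \le N_G(E)$, covering all $3$-subgroups $P$ that contain an $N_G(E)$-conjugate of $Z$.

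For a $3$-subgroup $P \le N_G(E)$ containing no $N_G(E)$-conjugate of $Z$, every element of order $3$ in $P$ lies in the non-$3$-central $\Co_2$-class, represented in $N_G(E)$ by $\rho$ via Lemma~\ref{signalizer}(ii). So up to $N_G(E)$-conjugacy $P$ contains a power of $\rho$, and it suffices to show $N_{C_G(r)}(\langle \rho \rangle) \le N_G(E)$. Lemma~\ref{Crho} gives $C_G(\rho) \cong \Sym(3) \times \Aut(\M(22))$, so $C_{C_G(r)}(\rho) = C_{C_G(\rho)}(r)$ is determined by the centralizer of the involution $r$ in $\Aut(\M(22))$; comparing with $C_{N_G(E)}(\rho)$, which by Lemma~\ref{signalizer}(ii) is the preimage in $N_G(E)$ of $3 \times \SU_4(2){:}2$, an order check together with the known $2$-local subgroup structure of $\M(22)$ (from \cite{Aschbacher3T} and \cite{GLS3}) forces equality. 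A second application of Lemma~\ref{fusion} with $\rho$ in place of $Z$, whose hypothesis (b) reduces to a parallel $\Co_2$-fusion statement for the non-$3$-central class, then handles this remaining case.

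The principal obstacle is the fusion verification for the non-$3$-central class, concretely identifying $C_{C_G(r)}(\rho)$ inside $\Sym(3) \times \Aut(\M(22))$ and matching it against the prediction from Lemma~\ref{signalizer}(ii); the $3$-central case is comparatively painless, being essentially a direct invocation of Proposition~\ref{NE} plus a bookkeeping of $\Co_2$-conjugacy. Once both fusion conditions are established, the criterion $N_{C_G(r)}(P) \le N_G(E)$ holds for every non-trivial $3$-subgroup $P \le N_G(E)$, proving strong $3$-embedding, and the ``in particular'' clause on fusion control of $3$-elements is then immediate.
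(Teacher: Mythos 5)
Your proposal is correct and takes essentially the same route as the paper: both arguments reduce to the two $N_G(E)$-classes of elements of order $3$ (represented by $z\in Z^{\#}$ and $\rho$, since $N_G(E)/E\cong \Co_2$ has exactly two such classes and $z\not\sim_G\rho$), then contain $C_{C_G(r)}(Z)$ in $N_G(E)$ via Proposition~\ref{NE} and contain $C_{C_G(r)}(\rho)$ via the order comparison between Lemma~\ref{Crho} and Lemma~\ref{signalizer}(ii) --- the paper simply concludes directly, citing \cite{GLS2} for fusion control, where you route everything through Lemma~\ref{fusion}. One caveat: your opening inference that $N_G(E)$ contains a Sylow $3$-subgroup of $C_G(r)$ because $|N_G(E)|_3=3^6$ is circular at this stage ($|C_G(r)|$ is not yet known); it should instead be deduced from the centralizer containments themselves (a $3$-overgroup $T_1>T\in \Syl_3(N_G(E))$ would give $N_{T_1}(T)$ centralizing a nontrivial element of $Z(T)$ and hence lying in $N_G(E)$), and likewise your fusion condition (b) is most cleanly settled by noting that $z$ and $\rho$ are not even $G$-conjugate, so no cross-class fusion can occur.
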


\begin{proof} By \cite[Table 5.3k]{GLS3} $\Co_2$ has exactly two conjugacy classes of elements of order
three. Hence the same holds for $N_G(E)$ by Proposition~\ref{NE}. Moreover as the non-trivial elements of
$Z$ and $\rho$  are not $G$-conjugate (by Lemma~\ref{Crho} for example) and, as $\langle Z,\rho\rangle \le
N_G(E)$, we have that $z \in Z^\#$ and $\rho$ can be taken as representatives  of the $N_G(E)$  conjugacy classes
of elements of order $3$. By Proposition~\ref{NE} we know $C_{C_G(r)}(Z) \le N_G(E)$.  By Lemma \ref{Crho},
$C_G(\rho) \cong 3 \times \Aut(\M(22))$. Hence $C_{C_G(r)}(\rho)$ has shape $ 3 \times 2^{1+10}_+.\SU_4(2).2$ and
therefore Lemma~\ref{signalizer} (ii) implies that $C_{C_G(r)}(\rho) \le N_G(E)$. Hence $N_G(E)$ is strongly
$3$-embedded in $C_G(r)$. The last claim follows from \cite[Proposition 17.11]{GLS2}.
\end{proof}

\section{The centralizer of $r$}

In this section we will show that $C_G(r) = N_G(E)$. For this we set $\ov {C_G(r)} = C_G(r)/\langle r \rangle$. Recall that by Proposition \ref{NE} we have that $N_G(E)/E \cong \Co_2$.

\begin{lemma}\label{JS} $\ov{N_G(E)}$ controls fusion in $\ov E$.
\end{lemma}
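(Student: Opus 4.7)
The plan is to show that $\overline{E}$ is characteristic in a Sylow $2$-subgroup of $\overline{C_G(r)}$, deduce that $\overline{E}$ is weakly closed there, and then lift an arbitrary $\overline{C_G(r)}$-fusion inside $\overline{E}$ to an $\overline{N_G(E)}$-fusion by a standard Sylow argument in the centraliser of its target.

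First I would apply Lemma~\ref{Co2}(iv) to $\overline{N_G(E)}$. By Proposition~\ref{NE} and Lemma~\ref{E1}, the extension $\overline{N_G(E)}/\overline E \cong \Co_2$ has $\overline E = O_2(\overline{N_G(E)})$ elementary abelian of order $2^{22}$, contains the subgroup $\overline{N_K(E)}$ with image $\PSU_6(2){:}2$ modulo $\overline E$, and this subgroup fixes the line $\overline{\langle \pi\rangle}$ because $\pi \in E$ is centralised by $K$. The brief technical check here is that $\overline E/\overline{\langle \pi\rangle}$, as a $\PSU_6(2){:}2$-module, has the $20$-dimensional composition factor required in the hypothesis of Lemma~\ref{Co2}; this is extracted from Proposition~\ref{Vaction}. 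Lemma~\ref{Co2}(iv) then gives $\overline E = J(\overline T_0)$ for any Sylow $2$-subgroup $\overline T_0$ of $\overline{N_G(E)}$.

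Next I would upgrade $\overline T_0$ to a Sylow $2$-subgroup of $\overline{C_G(r)}$. If $\overline T_0$ were properly contained in some Sylow $2$-subgroup $\overline T_1$ of $\overline{C_G(r)}$, then because normalisers grow in $p$-groups, $N_{\overline T_1}(\overline T_0)\setminus \overline T_0$ would contain a $2$-element $\overline t$. Since $\overline E = J(\overline T_0)$ is characteristic in $\overline T_0$, $\overline t \in N_{\overline{C_G(r)}}(\overline E) = \overline{N_G(E)}$; but every $2$-element of $\overline{N_G(E)}$ normalising its Sylow subgroup $\overline T_0$ already lies in $\overline T_0$, a contradiction. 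With $\overline T := \overline T_0$ Sylow in $\overline{C_G(r)}$, weak closure of $\overline E$ in $\overline T$ follows: any conjugate $\overline E^{\overline g} \le \overline T$ is elementary abelian of maximal order $2^{22}$ and so lies in $J(\overline T) = \overline E$.

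For the final step, given $\overline x, \overline y \in \overline E$ with $\overline y = \overline x^{\overline g}$ for some $\overline g \in \overline{C_G(r)}$, I would use that $\overline E$ is abelian and lies in the centraliser of both $\overline x$ and $\overline y$: choose Sylow $2$-subgroups $\overline U, \overline V$ of $C_{\overline{C_G(r)}}(\overline y)$ and $C_{\overline{C_G(r)}}(\overline x)$ each containing $\overline E$. Sylow's theorem inside $C_{\overline{C_G(r)}}(\overline y)$ produces $\overline c \in C_{\overline{C_G(r)}}(\overline y)$ with $\overline V^{\overline g \overline c} = \overline U$, so $\overline E^{\overline g \overline c} \le \overline U$ sits inside some Sylow $2$-subgroup of $\overline{C_G(r)}$. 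Weak closure (which passes across all Sylows by conjugacy) forces $\overline E^{\overline g \overline c} = \overline E$, placing $\overline g \overline c$ in $\overline{N_G(E)}$ with $\overline x^{\overline g \overline c} = \overline y$. The only step where anything beyond routine bookkeeping is needed is the invocation of Lemma~\ref{Co2}(iv); once $\overline E$ is recognised as $J$ of a Sylow $2$-subgroup, the rest is formal.
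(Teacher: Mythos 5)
Your argument is correct and follows the same route as the paper: Lemma~\ref{Co2}(iv) identifies $\ov E$ as the Thompson subgroup of a Sylow $2$-subgroup, whence $\ov E$ is weakly closed and its normalizer controls fusion in it. The paper simply cites \cite[37.6]{AschbacherFG} for this last (Burnside-type) step, which you have written out explicitly.
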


\begin{proof} This comes from Lemma~\ref{Co2}(iv) and \cite[37.6]{AschbacherFG}.
\end{proof}

%
%
%\begin{lemma}\label{strong}  $N_G(E)$ is strongly $3$-embedded in $C_G(r)$. In particular, $N_G(E)$ controls fusion of $3$-elements in $N_G(E)$.
%\end{lemma}
%\begin{proof}
%By \cite[Table 5.3k]{GLS3}, $\Co_2$ has exactly two  conjugacy classes of elements of order three and these are
%represented by $\rho$ and a non-trivial element from $Z$. Consider $C_{C_G(r)}(\rho)$. This group
%
%
%
%Hence $N_G(E)$ is strongly $3$-embedded by Lemma~\ref{signalizer} (iii).  The last claim follows from
%\cite[Proposition 17.11]{GLS2}.
%\end{proof}
%
%
\begin{lemma}\label{cent3} Let $\ov v \in \ov {N_G(E)} \setminus \ov E$ be an involution. Then $3$ divides $|C_{\ov N_G(E)}(\ov v)|$.
\end{lemma}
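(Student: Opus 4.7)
The plan is to reduce the statement to Lemma~\ref{Co2}(iii) applied to $\widehat G := \overline{N_G(E)}$. First I would verify the hypotheses needed to identify $\widehat G$ with the group denoted $\widehat G$ in Lemma~\ref{Co2}: by Lemma~\ref{E1} we have $Z(E) = E' = \langle r\rangle$, so $\overline{E} = E/\langle r\rangle$ is elementary abelian of order $2^{22}$, and by Proposition~\ref{NE} the quotient $\widehat G/\overline{E} \cong \Co_2$. Because $\Co_2$ is simple, $\overline{E} = O_2(\widehat G)$.

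Next I would identify $\overline{E}$, as a $\Co_2$-module, with the irreducible $22$-dimensional $\GF(2)\Co_2$-module $V$ featured in Lemma~\ref{Co2}. Restricting to the subgroup $H \cong \PSU_6(2){:}2$ of $\widehat G/\overline{E}$ arising from $C_K(r)/J$, Lemma~\ref{ZT} furnishes a filtration
\[
0 \;<\; \overline{\langle \pi\rangle} \;<\; \overline{J} \;<\; \overline{E}
\]
with $H$-composition factors of $\GF(2)$-dimensions $1$, $20$, $1$: the fixed line $\overline{\langle \pi\rangle}$ comes from $Z(J) = \langle \pi, r\rangle$, and $\overline{J}/\overline{\langle \pi\rangle}$ is the irreducible $20$-dimensional $\PSU_6(2){:}2$-module of Proposition~\ref{Vaction}. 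This matches the hypotheses imposed on $V$ in Lemma~\ref{Co2}, and using uniqueness of the irreducible $22$-dimensional $\GF(2)\Co_2$-module we conclude $\overline{E} \cong V$ as $\Co_2$-modules.

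With the identification in hand, Lemma~\ref{Co2}(iii) applies verbatim: every involution of $\widehat G \setminus O_2(\widehat G) = \widehat G \setminus \overline{E}$ centralizes an element of order $3$ in $\widehat G$. Applied to our $\overline{v}$, this yields an element of order $3$ in $C_{\widehat G}(\overline{v})$, so $3$ divides $|C_{\overline{N_G(E)}}(\overline{v})|$, as required.

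The main obstacle is the module identification $\overline{E} \cong V$: one must rule out that $\overline{E}$ is a non-split extension involving a trivial module and a smaller irreducible $\GF(2)\Co_2$-module. This follows by comparing the $H$-composition-factor data given above with the known list of small $\GF(2)\Co_2$-modules, combined with the fact (implicit in Lemma~\ref{Co2}(iv)) that $\overline{E}$ is a faithful abelian normal $2$-subgroup of $\widehat G$ of dimension exactly $22$. Once this identification is accepted, the conclusion is immediate from Lemma~\ref{Co2}(iii).
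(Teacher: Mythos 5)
Your proposal is correct and takes essentially the same route as the paper, whose entire proof is the citation of Lemma~\ref{Co2}(iii); you have merely made explicit the verification (via Lemma~\ref{ZT}, Lemma~\ref{E1} and Proposition~\ref{NE}) that $\ov E$ is the $22$-dimensional $\GF(2)\Co_2$-module with the required restriction to $\PSU_6(2){:}2$, which the paper leaves implicit. This added detail is a reasonable gloss rather than a different argument.
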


\begin{proof} This follows from Lemma~\ref{Co2}(iii).
\end{proof}

\begin{theorem}\label{centralizer2}$C_G(r) = N_G(E)$. In particular,  $C_G(r)$ is an extension of an
extraspecial group of order $2^{23}$ of $+$-type by $\Co_2$.
\end{theorem}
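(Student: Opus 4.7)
The goal is $\ov{C_G(r)}=\ov{N_G(E)}$, equivalently $E\trianglelefteq C_G(r)$, and the strategy is a local analysis inside $\ov{C_G(r)}$ using the three key tools already in hand: strong $3$-embedding of $\ov{N_G(E)}$ in $\ov{C_G(r)}$ (Lemma~\ref{strong}), control of $\ov{C_G(r)}$-fusion in $\ov E$ by $\ov{N_G(E)}$ (Lemma~\ref{JS}), and existence of centralizing $3$-elements for involutions outside $\ov E$ (Lemma~\ref{cent3}).

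First I would show that a Sylow $2$-subgroup $\ov T$ of $\ov{N_G(E)}$ is already Sylow in $\ov{C_G(r)}$. Using $\ov E = J(\ov T)$ from Lemma~\ref{Co2}(iv), if $\ov T$ were properly contained in a Sylow $2$-subgroup $\ov T^*$ of $\ov{C_G(r)}$, the standard normalizer-growth argument in $2$-groups would produce some $\ov x \in N_{\ov T^*}(\ov T)\setminus \ov T$; but such $\ov x$ normalizes $J(\ov T)=\ov E$ and hence lies in $N_{\ov{C_G(r)}}(\ov E)=\ov{N_G(E)}$, contradicting that $\ov T$ is Sylow in $\ov{N_G(E)}$.

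Next I would verify that every involution of $\ov{C_G(r)}$ lies in a conjugate of $\ov{N_G(E)}$. By Sylow's theorem every involution is $\ov{C_G(r)}$-conjugate to one in $\ov T\le \ov{N_G(E)}$. For involutions in $\ov{N_G(E)}\setminus \ov E$, Lemma~\ref{cent3} directly provides a centralizing $3$-element. For involutions $\ov v\in \ov E$, the centralizer $C_{\ov{N_G(E)}}(\ov v)$ contains $\ov E$ (as $\ov E$ is abelian) and projects onto the $\Co_2$-stabilizer of $\ov v$ in the natural $22$-dimensional module, which by the standard orbit analysis of $\Co_2$ on this module has order divisible by $3$. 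With a centralizing $3$-element $\ov y$ in hand, strong $3$-embedding forces $C_{\ov{C_G(r)}}(\ov y) \subseteq N_{\ov{C_G(r)}}(\langle \ov y\rangle) \subseteq \ov{N_G(E)}^{\ov g}$ for the unique conjugate of $\ov{N_G(E)}$ containing $\langle \ov y\rangle$, so $\ov v$ lies in a conjugate of $\ov{N_G(E)}$.

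The final step is to extend this containment from involutions to all of $\ov{C_G(r)}$, from which the classical observation that no finite group is a union of conjugates of a proper subgroup yields $\ov{C_G(r)}=\ov{N_G(E)}$, and then Proposition~\ref{NE} supplies the stated structure. Every $2$-element falls inside a conjugate of $\ov{N_G(E)}$ by Step~1, every $3$-element by strong $3$-embedding, and commuting-part decompositions handle elements whose order involves $2$ or $3$. The main obstacle is dealing with elements $\ov x\in \ov{C_G(r)}$ of pure prime-power order drawn only from $\{5,7,11,23\}$: for these, one must extract from $C_{\ov{C_G(r)}}(\ov x)$ an involution or a $3$-element (using the $\Co_2$-centralizer data on $\ov{N_G(E)}/\ov E$) and then propagate the containment back to $\ov x$ itself, being careful that the chosen centralizing element pins down the correct conjugate of $\ov{N_G(E)}$. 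An alternative route avoiding this case analysis would combine Lemma~\ref{JS} with the Alperin--Goldschmidt fusion theorem to show that $\ov{N_G(E)}$ controls $\ov{C_G(r)}$-fusion in the whole of $\ov T$, whereupon a focal-subgroup/transfer argument combined with the simple quotient $\ov{N_G(E)}/\ov E\cong \Co_2$ forces the desired equality directly.
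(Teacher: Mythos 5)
Your overall strategy---cover $\ov{C_G(r)}$ by conjugates of $\ov{N_G(E)}$ and invoke the fact that no finite group is a union of conjugates of a proper subgroup---has a genuine gap that you yourself flag but do not close, and it is not closable along the lines you sketch. First, for elements $\ov x$ of order $5$, $7$, $11$ or $23$ there is no mechanism to place $\ov x$ in a conjugate of $\ov{N_G(E)}$: the centralizer $C_{\ov{C_G(r)}}(\ov x)$ may contain no $3$-element at all (e.g.\ an element of order $23$ acts fixed-point-freely on $\ov E$ and is self-centralizing modulo $\ov E$ in $\ov{N_G(E)}$, and a priori could have a much larger centralizer in $\ov{C_G(r)}$), and even when $C_{\ov{C_G(r)}}(\ov x)$ contains an involution $\ov t$ lying in some conjugate $\ov{N_G(E)}^{\ov g}$, this says nothing about $\ov x$ unless $C_{\ov{C_G(r)}}(\ov t)\le \ov{N_G(E)}^{\ov g}$ --- which is essentially the statement being proved. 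Strong $3$-embedding only propagates containment through $3$-elements, not through involutions. Second, your Step~2 proves something that is already immediate from Sylow's theorem (every $2$-element lies in a conjugate of $\ov T\le\ov{N_G(E)}$); the centralizing $3$-elements of Lemma~\ref{cent3} are needed not for membership but for \emph{fusion control}, which your argument never establishes. Your ``alternative route'' via Alperin--Goldschmidt and transfer is also not a proof: control of fusion plus the focal subgroup theorem would only identify $\ov{C_G(r)}/O^2(\ov{C_G(r)})$, and since $\ov{N_G(E)}$ is perfect this yields no information and certainly does not force $\ov E$ to be normal.

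The missing idea is Goldschmidt's theorem on strongly closed abelian $2$-subgroups (Lemma~\ref{Gold}). The paper's proof shows that $\ov E$ is strongly closed in $\ov{N_G(E)}=N_{\ov{C_G(r)}}(\ov E)$ with respect to $\ov{C_G(r)}$, and then Lemma~\ref{Gold} (using $O_{2'}(C_G(r))=1$ and the simplicity of $\ov{N_G(E)}/\ov E\cong\Co_2$) forces $\ov{N_G(E)}=\ov{C_G(r)}$ outright, with no need to touch elements of odd order prime to $3$. The strong closure is obtained by working with the specific involution $\ov\pi$, whose full centralizer in $\ov{C_G(r)}$ is known from Theorem~\ref{tpi2}: one first shows via Lemma~\ref{cent3} and strong $3$-embedding that no $\ov{C_G(r)}$-conjugate of $\ov\pi$ lands in $\ov{N_G(E)}\setminus\ov E$, which establishes the two hypotheses of Lemma~\ref{fusion} for $\ov\pi$; then, since every involution of $\Co_2$ is conjugate into $C_K(r)E/E\cong\PSU_6(2){:}2\le C(\ov\pi)$ (Lemma~\ref{Co2}(ii)), any putative fusion of an involution of $\ov E$ to one outside $\ov E$ can be pushed into $C_{\ov{C_G(r)}}(\ov\pi)$ and contradicted by Lemma~\ref{fusion}(ii) together with Lemma~\ref{JS}. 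You have assembled the right ingredients (Lemmas~\ref{strong}, \ref{JS}, \ref{cent3}) but deployed them toward a covering argument that cannot be completed; they are designed to feed a strong-closure/Goldschmidt argument instead.
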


\begin{proof} Let $g \in \ov{C_G(r)}$ such that $\ov \pi^g \in \ov{N_G(E)} \setminus\ov E$. Set $\ov v = \ov \pi^g$.
By Lemma \ref{cent3}, there is  an element $\tau$ of order $3$ in $C_{\ov{N_G(E)}/\ov E}(\ov v)$. Hence $\tau^{g^{-1}} \in
C_{\ov{C_G(r)}}(\pi) \leq \ov{N_G(E)}$ (recall we know $C_G(\pi)$ by Theorem~\ref{tpi2}). By Lemma \ref{strong} there is some $h \in \ov{N_G(E)}$ such that $\tau^{g^{-1}h} =
\tau$. Further by Lemma \ref{strong} we have that $C_{\ov{C_G(r)}}(\tau) \leq \ov{N_G(E)}$ and so $\ov{g^{-1}h} \in  \ov{N_G(E)}$.
But then also $g \in \ov{N_G(E)}$, which contradicts $\ov \pi \in \ov E$ but $\ov v \not\in \ov E$. So, because of Lemma \ref{JS},  we
have  demonstrated the following two properties of the embedding of $N_G(E)$ in $C_G(r)$:
\begin{enumerate}\item  [(a)] $C_{\ov{C_G(r)}}(\ov \pi) \le \ov{N_G(E)}$; and
\item [(b)]$\ov \pi^{\ov{C_G(r)}} \cap \ov{N_G(E)} = \ov \pi^{\ov{N_G(E)}}$.
\end{enumerate}
Suppose that $\ov u \in \ov E$ is an involution and that $\ov u$  is $\ov{C_G(r)}$-conjugate to $\ov w \in
\ov{N_G(E)}\setminus \ov E$. Then, as every involution of $\Co_2$
 is conjugate to an involution in $C_K(r)E/E\cong \PSU_6(2){:}2$ by Lemmas~\ref{Co2} (ii) and \ref{E1},
 we may suppose that $\ov w \in C_{\ov{C_G(r)}}(\ov{\pi})$. Points (a) and (b) above provide the hypothesis of Lemma~\ref{fusion}. Therefore
 $$\ov w \in u^{\ov{C_G(r)}}\cap C_{\ov{C_G(r)}}(\ov \pi)= \ov u^{\ov{N_G(E)}} \cap C_{\ov{C_G(r)}}(\ov \pi).$$ Since $\ov u \in \ov E$ and $\ov w \not \in \ov E$ this is impossible.

We have shown that $\ov E$ is strongly 2-closed in $\ov {N_G(E)}$. Since $O_{2^\prime}(C_G(r)) = 1$ by Lemma \ref{signalizer}, we now apply Lemma~\ref{Gold} to obtain $C_G(r)=N_G(E)$.\end{proof}

We can now prove our second main theorem.

\begin{proof}[Proof of Theorem~\ref{Main2}] By Theorems \ref{tpi2} and \ref{centralizer2},  $C_G(\pi)$
is isomorphic to $(2\udot {}^2\E_6(2)){:}2$ and $C_G(r)$ is an extension of an extraspecial group of order
$2^{23}$ of $+$-type by $\Co_2$. Hence we have verified our definition of $\F_2$ as required to prove the
theorem.
\end{proof}

\end{document}